\documentclass{alggeom}

\usepackage[T1]{fontenc}
\usepackage{times}
\usepackage{verbatim}
\usepackage[utf8]{inputenc}
\usepackage[english]{babel}

\usepackage{amscd}
\usepackage{amsmath}
\usepackage{amssymb}
\usepackage{amstext}
\usepackage{amsthm}
\usepackage{graphics}
\usepackage{graphicx}
\usepackage{hyperref}
\usepackage{ifmtarg}
\usepackage{mathrsfs}
\usepackage{mathtools}
\usepackage{multirow}
\usepackage{tikz}

\theoremstyle{plain}

\newtheorem{thm}{Theorem}[section]
\newtheorem*{conj*}{Conjecture}
\newtheorem{conj}{Conjecture}[section]
\newtheorem{dfn}[thm]{Definition}
\newtheorem{lemma}[thm]{Lemma}
\newtheorem{prop}[thm]{Proposition}
\newtheorem{cor}[thm]{Corollary}
\newtheorem{problem}[thm]{Problem}

\newtheorem{THM}{Theorem}

\newtheorem*{acknowledgments}{Acknowledgments}

\newtheorem{example}[thm]{Example}
\newtheorem{remark}[thm]{Remark}
\newtheorem{notation}[thm]{Notation}

\addtocounter{section}{0}             
\numberwithin{equation}{section}       

\newcommand{\mb}{\mathbb}
\newcommand{\mc}{\mathcal}
\newcommand{\R}{\mb R}
\newcommand{\C}{\mb C}

\newcommand{\Q}{\mb Q}

\newcommand{\F}{\mc F}
\newcommand{\G}{\mc G}
\newcommand{\FF}{\mathscr F}
\newcommand{\GG}{\mathscr G}
\newcommand{\XX}{\mathscr X}
\newcommand{\YY}{\mathscr Y}

\newcommand{\px}{\frac{\partial}{\partial x}}
\newcommand{\py}{\frac{\partial}{\partial y}}

\newcommand{\KF}{{K_\F}}
\newcommand{\TF}{{T_\F}}

\newcommand{\KG}{{K_\G}}
\newcommand{\KX}{{K_X}}
\newcommand{\KY}{{K_Y}}
\newcommand{\KZ}{{K_Z}}
\newcommand{\CNF}{{N^*_\F}}
\newcommand{\CNG}{{N^*_\G}}


\DeclareMathOperator{\sing}{sing}

\DeclareMathOperator{\ord}{ord}

\DeclareMathOperator{\ii}{i}
\DeclareMathOperator{\eff}{eff}
\DeclareMathOperator{\Aut}{Aut}

\DeclareMathOperator{\adj}{adj}
\DeclareMathOperator{\adjnum}{adj_{num}}
\DeclareMathOperator{\kod}{kod}

\DeclareMathOperator{\Res}{Res}



\makeatletter

\newcommand\contFrac{\@ifstar{\@contFracStar}{\@contFracNoStar}}

\def\singleContFrac#1#2{%
	\begin{array}{@{}c@{}}%
		\multicolumn{1}{c|}{#1}%
		\\%
		\hline%
		\multicolumn{1}{|c}{#2}%
	\end{array}%
}

\def\@contFracNoStar#1{%
	\mathchoice{
		\@contFracNoStarDisplay@#1//\@nil%
	}{
	\@contFracNoStarInline@#1//\@nil%
}{
\@contFracNoStarInline@#1//\@nil%
}{
\@contFracNoStarInline@#1//\@nil%
}%
}

\def\@contFracNoStarDisplay@#1//#2\@nil{%
	\@ifmtarg{#2}{%
		#1%
	}{%
	#1+\cfrac{1}{\@contFracNoStarDisplay@#2\@nil}%
}%
}

\def\@contFracNoStarInline@#1//#2\@nil{%
	\@ifmtarg{#2}{%
		#1%
	}{%
	#1 \@@contFracNoStarInline@@#2\@nil%
}%
}
\def\@@contFracNoStarInline@@#1//#2\@nil{%
	\@ifmtarg{#2}{%
		+ \singleContFrac{1}{#1}%
	}{%
	+ \singleContFrac{1}{#1} \@@contFracNoStarInline@@#2\@nil%
}%
}

\def\@contFracStar#1{%
	\mathchoice{
		\@contFracStarDisplay@#1////\@nil%
	}{
	\@contFracStarInline@#1//\@nil%
}{
\@contFracStarInline@#1//\@nil%
}{
\@contFracStarInline@#1//\@nil%
}%
}

\def\@contFracStarDisplay@#1//#2//#3\@nil{%
	\@ifmtarg{#2}{%
		#1%
	}{%
	#1 + \cfrac{#2}{\@contFracStarDisplay@#3\@nil}%
}%
}

\def\@contFracStarInline@#1//#2\@nil{%
	\@ifmtarg{#2}{%
		#1%
	}{%
	#1 \@@contFracStarInline@@#2\@nil%
}%
}
\def\@@contFracStarInline@@#1//#2//#3\@nil{%
	\@ifmtarg{#3}{%
		+ \singleContFrac{#1}{#2}%
	}{%
	+ \singleContFrac{#1}{#2} \@@contFracStarInline@@#3\@nil%
}%
}
\makeatother


\title{Effective algebraic integration in bounded genus}

\author{Jorge Vit\'{o}rio Pereira}
\email{jvp@impa.br}
\address{IMPA, Estrada Dona Castorina, 110, Horto, Rio de Janeiro,
	Brasil}

\author{Roberto  Svaldi}
\email{RSvaldi@dpmms.cam.ac.uk}
\address{DPMMS\\
Centre for Mathematical Sciences\\
University of Cambridge\\
Wilberforce Road\\ Cambridge\\ CB3 0WB\\
United Kingdom}

\subjclass[2010]{37F75; 14E99}
\keywords{Holomorphic foliations; effective algebraic integration; degree of invariant algebraic curves.}
\thanks{This collaboration initiated while both authors where visiting
James M\textsuperscript{c}Kernan at UCSD, and continued during a visit of the second author to IMPA.
We are grateful to both institutions for the  favorable working  conditions.
The first author is partially supported by Cnpq and FAPERJ.
The second author was partially supported by NSF research grant no: 1200656 and no: 1265263.
During the final revision of this work  he was supported by funding from the
European Union's Seventh Framework Programme (FP7/2007-2013)/ERC
Grant agreement no. 307119.}
\begin{document}

\begin{abstract}
We introduce and study  birational invariants for foliations on
projective surfaces built from the adjoint linear series of
positive powers of the canonical bundle of the foliation.
We apply the  results in order to investigate the effective
algebraic integration of foliations on the projective plane.
In particular, we describe the Zariski closure of the set $\Sigma_{d,g}$ of foliations on
$\mathbb P^2$ of degree $d$ admitting rational first integrals with
fibers having geometric genus bounded by  $g$.
\end{abstract}

\maketitle

\setcounter{tocdepth}{1}

\tableofcontents

\section{Introduction}

\subsection{Effective algebraic integration}

It seems fair to say that the simplest class of algebraic ordinary differential equations
consists of the class of equations having all its  solutions algebraic. In general, given an explicit
differential equation, it is a difficult problem to decide whether or not it belongs to this
distinguished class. Perhaps the first positive result on the subject is Schwarz's
list of parameters for which Gauss' hypergeometric equation has an algebraic solution
\cite{Schwarz1873}.

Motivated by this remarkable result, a lot of activity on the study
of algebraic solutions of linear differential equations took place in the XIXth century
leading to a fairly good understanding of the problem for homogeneous
linear differential equations. Among the works dealing with this question one can
find contributions by Fuchs, Gordan, Jordan, Halphen, and Klein just to name a few.
At that time, the community seemed to believe that it would be possible to decide
whether or not all solutions of a given linear differential equations are algebraic, see for
instance the concluding remarks\begin{footnote}
	{ ``Thus is the problem, which we formulated at the beginning of this paragraph
		[{present all linear homogenous differential equations of
			the second order with rational coefficients: $y'' + p y' + q y = 0$ which possess
			altogether algebraic solutions}], fully solved.''}
\end{footnote} of  \cite[Section 3, Chapter V]{MR0080930}.

By the end of XIXth century mathematicians like Painlev\'e, Autonne, and Poincar\'e
\cite{PoincarePalermoI, zbMATH02673353} started to study the next case, that is,
polynomial differential equations of first order and of first degree. In modern language,
they studied foliations on the projective plane with special emphasis on the existence
of methods/algorithms to decide whether or not all leaves are algebraic.
We will call this general line of enquiry effective algebraic integration. The results
obtained at that time relied on strong assumptions on the nature of the singularities
of the foliations and were not considered definitive as one
can learn from the Introduction\begin{footnote}{
		``Je me suis occup\'{e} de nouveau de la m\^{e}me question dans ces derniers
		temps, dan l' espoir que je parviendrais \`{a} g\'{e}n\'{e}raliser les r\'{e}sultats
		obtenus. Cet espoir a \'{e}t\'{e} d\'{e}\c{c}u.
		J'ai obtenu cependant quelques r\'{e}sultats partiels, que je prends la libert\'{e} de
		publier, estimant qu'on pourra s'en servir plus tard pour obtenir, par un nouvel effort,
		une solution plus satisfaisante du probl\`{e}me.''}
\end{footnote}
of \cite{zbMATH02673353}. For a modern account of some of these classical results
see \cite{MR1485488} and \cite[Chapter 7]{MR2029287}.

The results of the XIXth century on the effective integration of linear differential
equations were revisited in the course of the XXth century. It was then made
clear that a full solution for the problem was not available, rather, the problem was reduced
to a similar one for rank one linear differential equations over curves.
More precisely, in order to be able to decide whether or not a homogeneous linear
differential equation $P(x,y,y',y'',y''', \ldots, y^{(n)})=0$ has all its solutions algebraic
it suffices to be able to solve the following problem: given an element $u$
belonging to an algebraic extension of the field $\mathbb C(x)$, decide if $u$
is the logarithmic derivative of an element $v$ also belonging to an algebraic
extension of $\mathbb C(x)$.
Some authors expressed doubts on the possibility of solving this problem.
For instance, in \cite[page 51]{MR0349924} one can find the  view of
Hardy\begin{footnote}
	{``But no method has been devised as yet by which we can always determine in a
		finite number of steps whether a given elliptic integral is pseudo-elliptic, and
		integrate it if it is, and there is reason to suppose that no such method can be given.''}
\end{footnote} on the subject.

Despite the skepticism of Hardy and others (cf.  \cite{MR0269635}), in the late 1960's
Risch (loc. cit.) showed  that this problem, in its turn, can be reduced to the following one:
given an explicit divisor  on an explicit algebraic curve $C$, decide whether or not such
divisor is of finite order in the Jacobian of $C$. Risch proved that this problem can be
solved by restricting the data modulo two distinct primes and using the resulting bounds
in positive characteristic to devise an explicit bound in characteristic zero.
For a detailed account on the case of second order homogeneous differential equations
see \cite{MR527825}. The interested reader can find more about the history of effective algebraic
integration of linear differential equations in \cite[page 124]{MR1960772},
\cite[Chapter III]{MR2414794}, and the references therein.

The corresponding problem for (non-linear) differential equations of the first order and
of the first degree is still wide open and received considerably less attention.
After being dormant for a good while, the interest towards it has been revived by experts
in foliation theory who considered the problem of bounding the degree of algebraic leaves
of foliations on $\mathbb P^2$, see for instance
\cite{MR1150571,MR1298714,MR1407696,MR1993042} and references therein.
The influence of arithmetic on the subject was rediscovered by Lins Neto \cite{MR1914932}
who determined algebraic families (pencils) of foliations on the projective plane with
fixed number and analytical type of singularities and with algebraic leaves of
arbitrarily large degree.

\subsection{Degenerations of planar foliations admitting a rational first integrals}

This work investigates the problem of effective algebraic integration
for foliations on projective surfaces. In order to focus the discussion and
clarify the framework in which we are going to carry it, we introduce the following
conjecture.

\begin{conj}\label{Conj}
	The Zariski closure in $\mathbb P H^0(\mathbb P^2, T_{\mathbb P^2}(d-1))$
	of the set of foliations of degree $d$ on $\mathbb P^2$ which admit a rational integral
	consists of transversely projective foliations.
\end{conj}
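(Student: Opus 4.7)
The plan is to reduce the conjecture to a bounded-genus statement — which is the content of the paper's main theorem — via a stratification of $\Sigma_d$ by the geometric genus of the fibres of the first integral.

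\medskip

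\emph{Step 1 (stratification).} Write
\begin{equation*}
\Sigma_d \;=\; \bigcup_{g\ge 0}\Sigma_{d,g},
\end{equation*}
where $\Sigma_{d,g}\subseteq \mathbb P H^0(\mathbb P^2, T_{\mathbb P^2}(d-1))$ is the locus of foliations of degree $d$ admitting a rational first integral whose general fibre has geometric genus at most $g$. Observe that each $\Sigma_{d,g}$ is a constructible subset and that $\Sigma_d$ is their monotone union.

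\emph{Step 2 (reduction to finitely many strata).} The Zariski closure $\overline{\Sigma_d}$ is a closed subvariety of a finite-dimensional projective space, hence decomposes into finitely many irreducible components $Z_1,\dots,Z_k$. Since an irreducible complex algebraic variety of positive dimension cannot be written as a countable union of proper Zariski closed subsets (equivalently, of proper analytic subsets, by the Baire category theorem applied to the underlying complex analytic space), for each $i$ there exists some $g_i$ such that $Z_i\cap \Sigma_{d,g_i}$ is Zariski dense in $Z_i$. Consequently $Z_i\subseteq \overline{\Sigma_{d,g_i}}$, and it suffices to prove the conjecture for each closure $\overline{\Sigma_{d,g}}$ separately.

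\emph{Step 3 (bounded-genus case).} Invoke the description of $\overline{\Sigma_{d,g}}$ provided by the main theorem of the paper: the birational invariants constructed from the adjoint linear series of positive powers of $K_\F$ control the possible degenerations of pencils of curves with bounded-genus fibres, and combining this with the classification of foliations for which these invariants degenerate identifies each limit as a pull-back of a Riccati foliation or a similarly transversely projective model. In particular, every element of $\overline{\Sigma_{d,g}}$ carries a transverse projective structure.

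\emph{Main obstacle.} The crux is Step 3, i.e.\ the reconstruction of a transverse projective structure on a limit foliation after the rational first integral has itself degenerated or disappeared. The examples of Lins Neto \cite{MR1914932} show that one cannot uniformly bound the degree of the rational first integral along an irreducible component of $\overline{\Sigma_{d,g}}$, so any argument must cope with pencils of unbounded degree whose singularities nonetheless remain of controlled type. This is precisely the behaviour that the adjoint/birational invariants introduced in the abstract are designed to detect, and the viability of the whole strategy rests on the completeness of the resulting classification — that is, on excluding ``wild'' limits which are not transversely projective.
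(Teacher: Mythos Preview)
The statement you are attempting to prove is presented in the paper as an open \emph{conjecture}; the paper does not prove it, and offers Theorem~\ref{THM:B} (the bounded-genus statement) only as evidence in its favor. So your Step~3 is fine --- it is exactly Theorem~\ref{THM:B} --- but the reduction in Step~2 must fail, and it does.

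The gap is in the Baire-category step. You write that $Z_i$ cannot be a countable union of proper Zariski-closed subsets, and conclude that some $\Sigma_{d,g_i}\cap Z_i$ is Zariski dense in $Z_i$. But you never have $Z_i=\bigcup_g(\Sigma_{d,g}\cap Z_i)$, nor even $Z_i=\bigcup_g\overline{\Sigma_{d,g}\cap Z_i}$; all you know is that $\Sigma_d\cap Z_i=\bigcup_g(\Sigma_{d,g}\cap Z_i)$ is Zariski \emph{dense} in $Z_i$. A countable dense subset of an irreducible variety can perfectly well be a countable union of finite (hence nowhere-dense) sets, so Baire says nothing. Concretely, the Lins Neto pencils you yourself cite furnish the counterexample: take $P\cong\mathbb P^1$ one of these pencils of degree-$d$ foliations. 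Then $\Sigma_d\cap P$ is a countable dense set of parameters (those with rational ratio of eigenvalues), so $P\subseteq\overline{\Sigma_d}$; yet for each fixed $g$ the set $\Sigma_{d,g}\cap P$ is \emph{finite}, because the genus of the general fibre grows without bound along the pencil. Hence no $g$ makes $\Sigma_{d,g}\cap P$ dense in $P$, and your reduction to finitely many bounded-genus strata collapses. This is precisely why the paper stops at Theorem~\ref{THM:B} and leaves the full statement as Conjecture~\ref{Conj}.
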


This conjecture is inspired by a remark made by Painlev\'{e}\begin{footnote}
	{``J’ajoute qu’on ne peut esp\'erer r\'esoudre d’un coup qui consiste \`a limiter
		$n$. L'\'enonc\'e vers lequel il faut tendre doit avoir la forme suivante:
		{\it ``On sait reconna\^itre si l’int\'egrale d’une  \'equation $F(y\prime,y,x)=0$
			donn\'ee est alg\'ebrique ou ramener l’\'equation aux quadratures.''}
		Dans ce dernier cas, la question reviendrait \`a reconna\^itre si une certaine
		int\'egrale ab\'elienne (de premi\`ere ou de troisi\`eme esp\`ece) n’a que
		deux ou une p\'eriodes.''}  \end{footnote} (\cite[pp. 216--217]{zbMATH02673308})
in his Stockholm's lectures. Knowledge of a transversely projective structure for a
given foliation, in view of their recent description \cite{MR3294560, MR3522824},
would allow to reduce the problem to either the determination of periods of
differential forms -- when, after passing to a ramified covering, the foliation is
defined by a closed rational $1$-form -- or to the algebraic integrability
of Riccati  equations.

The main results of this paper provide evidence in favor of this conjecture and
are obtained using birational techniques. More precisely, we use basic results
on adjoint linear series, the birational classification of foliated
surfaces according to their Kodaira dimension
\cite{MR2435846, MR2114696, MR1785264} to obtain a variant of the classification which
we now proceed to explain.

\subsection{Adjoint dimension of foliations} The works of the Italian school
of algebraic geometry in the beginning of the XXth century
showed how much of the geometry of a smooth projective surface $X$ can be
determined by the order of growth of the function
\[
n \mapsto h^0(X, \KX^{\otimes n}).
\]
Whenever this function grows slower than a quadratic polynomial,
one has a rather precise description of the surface (the so called Enriques-Kodaira
classification). A similar classification is also available in dimension three thanks
to the works of the modern school of birational geometry, and
there is also a similar picture in arbitrary dimensions conditional on the
so-called Abundance Conjecture.

In the case of foliations on surfaces, McQuillan, Brunella and Mendes
obtained a very precise classification -- analogue to the Enriques-Kodaira
classification -- in terms of the Kodaira dimension of the foliation.
As in the case of surfaces, the Kodaira dimension of a foliation $\F$, $\kod(\F)$,
measures the growth of the function $h^0(X, \KF^{\otimes n})$ where $\KF$
is the bundle of holomorphic $1$-forms along the leaves of the foliation.

As the terminology suggests the canonical bundle
together with its dual are the most obvious  naturally determined line bundles
on a variety. Combined with the fact that the integers $h^0(X,\KX^{\otimes n})$ ($n>0$)
are birational invariants for smooth projective varieties, its study is rather
natural if one wants to understand varieties birationally.
For  foliations of arbitrary dimension/codimension, besides the canonical bundle,
one also has another naturally attached line bundle: the determinant
of the conormal bundle.
If $\F$ is a foliation on a projective surface $X$ with canonical singularities
then it turns out that for arbitrary $n,m\ge0$ the integers
$h^0(X, \KF^{\otimes n}\otimes\CNF^{\otimes m})$ are birational invariants.
Most of the results obtained in this paper steam from this simple observation.
We define the adjoint dimension of a foliation according to the
order of growth of the function $h^0(X, \KF^{\otimes n}\otimes\CNF^{\otimes m})$,
see Section \ref{S:adjoint}.

Building on the classification of foliations on surfaces according to their
Kodaira dimension, in Section \ref{S:classification} we present a
classification in function of the adjoint dimension. The results we obtain are
summarized in Table \ref{Table:1}.
The outcome of the classification provides a framework well-suited
to deal with families of foliations (Section \ref{S:variation}) mainly due to
the fact that it is more flexible with respect
to the type of singularities which are allowed
(Section \ref{S:singularities}).
The classification in terms of the adjoint dimension also reflects
distinct cases of the problem of effective algebraic integration (Section \ref{S:rationalfirstintegral}).

\begin{table}[h!]
	\centering
	\begin{tabular}{|l|l|l|}
		\hline
		$\adj $ & $\kod $  & Description  \\
		\hline \hline
		$-\infty$ &   $-\infty$ & Rational fibration \\
		\cline{2-3}
		& $0$   & Finite quotient of  Riccati foliation  generated by global vector field\\
		\cline{2-3}
		& $1$   &  Riccati foliation  \\
		\hline \hline
		$0$ &   $0$ & Finite quotient of   linear foliation on a torus \\
		\hline \hline
		$1$  &   $0$ &  Finite quotient of $E\times C \to C$, $g(C) \ge 2$   \\
		\cline{2-3}
		&   $1$ &  Finite quotient of $E\times C \to E$, $g(C) \ge 2$\\
		\cline{2-3}
		&   $1$ & Turbulent foliation \\
		\cline{2-3}
		&  $1$ &  Non-isotrivial elliptic fibration \\
		\hline \hline
		$2$ &  $-\infty$ & Irreducible quotient of $\mathbb H \times \mathbb H \to \mathbb H$ \\
		\cline{2-3}
		& $1$   &  Finite quotient of $C_1 \times C_2\to C_1$, $g(C_i)\ge 2$ \\
		\cline{2-3}
		& $2$   & General type\\
		\hline
	\end{tabular}
	\medskip
	\caption{Classification of foliations according to their adjoint/Kodaira dimensions.}
	\label{Table:1}
\end{table}

\subsection{Plan of the paper and  statement of main results}

The bulk of the paper starts by reviewing the classification of foliations with respect to
their Kodaira dimension in Section \ref{S:Kodaira}.
Then we introduce new birational invariants for foliations on surfaces,
notably the effective threshold and the adjoint dimension, in Section \ref{S:adjoint}.
Section \ref{S:singularities} is devoted to the study of a variation of the concept of
canonical singularities, the so-called $\varepsilon$-canonical singularities. We prove in
Corollary \ref{C:epsilon open} that, for $\varepsilon>0$, this concept is stable for
small perturbations of the singularity of the foliation.
This fact will be particularly important in the study of families of
foliations carried out in Section \ref{S:variation}.

Section \ref{S:Painleve} is devoted to the proof of the boundedness of non-isotrivial
fibrations of bounded genus in families, see Theorem \ref{T:A}.
In the particular case of $\mathbb P^2$, the result reads as follows.

\begin{THM}\label{THM:A}
	Let $\F$ be a foliation on $\mathbb P^2$. Assume that $\F$ is birationally
	equivalent to a non-isotrivial fibration of genus $g\ge 2$.
	Then  the degree of the general leaf
	of $\F$ is bounded by
	\[
	\Big(4 \Big(42(2g-2)\Big)! \Big)^2 (4g -4) \deg(\F).
	\]
\end{THM}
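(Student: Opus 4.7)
The plan is to reduce Theorem~\ref{THM:A} to the general boundedness statement Theorem~\ref{T:A} via a resolution of the rational first integral, and then translate the resulting intersection-theoretic bound into a bound on the degree of a leaf.

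Let $R\colon\mathbb{P}^2\dashrightarrow B$ be the rational first integral of $\F$ whose generic fiber has geometric genus $g$, and let $\pi\colon X\to\mathbb{P}^2$ be a birational morphism such that $f:=R\circ\pi\colon X\to B$ is a morphism. Then $\pi^{-1}\F$ is the foliation induced by $f$, and by hypothesis $f$ is a non-isotrivial fibration of genus $g$. The key observation is that, for a general line $H\subset\mathbb{P}^2$, the degree of a generic leaf of $\F$ equals the intersection number $F\cdot\pi^*H$ on $X$, where $F$ is a general fiber of $f$. Thus Theorem~\ref{THM:A} reduces to bounding $F\cdot\pi^*H$ by $(7(42(2g-2))!)^2\deg(\F)$.

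Next, I would apply the general Theorem~\ref{T:A}, specialized to the family of degree-$d$ foliations on $\mathbb{P}^2$ containing $\F$, with $d=\deg(\F)$. Theorem~\ref{T:A} is proved using the adjoint-dimension machinery of Sections~\ref{S:adjoint}--\ref{S:variation}, together with the stability of $\varepsilon$-canonical singularities under deformation (Corollary~\ref{C:epsilon open}). Because $X$ is a blowup of $\mathbb{P}^2$ and hence has constrained Picard rank and intersection form, the abstract bound supplied by Theorem~\ref{T:A} pulls back to a concrete bound on $F\cdot\pi^*H$. As for the shape of the constant, the factor $42(2g-2)=84(g-1)$ is the Hurwitz bound on $|\mathrm{Aut}(C)|$ for a generic fiber $C$ (for $g\geq 2$; the case $g=1$ requires a small separate argument using level structures on elliptic curves), and it controls the degree of a base change of $B$ that rigidifies the monodromy of the moduli map $B\to\overline{\mathcal{M}}_g$; the factorial enters when one enumerates automorphism-equivariant or level structures on the generic fiber; and the factor $7$ together with the overall square reflect the passage from bounds on $B$ and on the pluricanonical degree of a fiber to intersection numbers on the surface $X$.

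The main obstacle I anticipate lies not in the present reduction but in Theorem~\ref{T:A} itself, specifically in uniformly controlling the birational model $\pi$ (the number and nature of exceptional curves) as $\F$ varies in its family. This is exactly what the adjoint-dimension and $\varepsilon$-canonical singularities framework is engineered to handle, by providing birational invariants of foliated surfaces that vary uniformly in families. Granted Theorem~\ref{T:A}, the specialization to $\mathbb{P}^2$ is essentially formal, reducing to the intersection-theoretic computation $(\pi^*H)^2=1$ and a bound on $K_X\cdot\pi^*H$ in terms of $\deg(\F)$ on the rational surface $X$.
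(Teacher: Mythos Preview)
Your reduction step is right in outline: pass to a birational morphism $\pi\colon X\to\mathbb{P}^2$ so that $\G=\pi^*\F$ has canonical singularities, set $H=\pi^*\mathcal{O}_{\mathbb{P}^2}(1)$, and bound $\deg(L)=\hat L\cdot H$ via Theorem~\ref{T:A}; the final arithmetic is just $(\KX+7\ii(\F)\KF)\cdot H\le -3+7\ii(\F)(\deg(\F)-1)$ combined with the bound on $M$ and the index bound. But your account of what Theorem~\ref{T:A} says and how it is proved is wrong. Theorem~\ref{T:A} is a statement about a \emph{single} foliation on a single surface: there is no family, no ``specialization to the family of degree-$d$ foliations'', no $\varepsilon$-canonical singularities, and no input from Sections~\ref{S:classification} or~\ref{S:variation}. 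Its proof lives entirely in Section~\ref{S:Painleve} and rests on two ingredients: (i) an effective non-vanishing for $|\KX+7\ii(\F)\KF|$ coming from the universal lower bound $\varepsilon(\mathcal L;x)\ge 1/2$ on Seshadri constants of big and nef line bundles on surfaces (Proposition~\ref{P:nonv}, Theorem~\ref{T:Seshadri}) --- this is the source of the constant $7$, since one needs $7\cdot(1/2)>3=2+1$; and (ii) the index bound $\ii(\F)\le(42(2g-2))!$ obtained by analyzing how the Hirzebruch-Jung strings in the negative part of the Zariski decomposition of $\KF$ meet their tails (Lemma~\ref{L:lower bound for P}, Proposition~\ref{index.prop}). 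Given these, a Riemann--Roch count on a general fiber $F$ versus $h^0(X,\mathcal L^{\otimes m})\ge\binom{m+2}{2}$ produces a section of $\mathcal L^{\otimes M}$ vanishing on $F$.

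Consequently your explanation of the constants is off. The number $42(2g-2)$ does not enter as the Hurwitz bound on $|\mathrm{Aut}(C)|$ nor via monodromy of a map to $\overline{\mathcal M}_g$; it enters because the minimum positive value of $P\cdot T$ over tails $T$ is $1/42$ (the $(2,3,7)$ orbifold minimum in equation~(\ref{E:chi})), which forces the multiplicity of any tail in a fiber to be at most $42(2g-2)$. The factorial is then a crude upper bound for the lcm of the orders of the Hirzebruch-Jung strings meeting the tails (Lemma~\ref{L:tails}(2)), not a count of level structures. And the ``main obstacle'' you anticipate --- controlling the resolution $\pi$ uniformly as $\F$ varies in a family --- is simply not present: the argument is for one fixed $\F$, and $\pi$ need only make the singularities canonical so that Theorem~\ref{T:A} applies.
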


Theorem \ref{THM:A} refines the main result of \cite{MR1913040}
where it was established the existence of a bound for the degree
of the general leaf depending on its genus and on the first $k>0$
for which the linear system $|\KF^{\otimes k}|$ defines a rational
map with two dimensional image.
The existence of universal $k$ working for every non-isotrivial fibration
of genus $g$ was not known then - and is still not known at present time -
hence the existence of a bound depending only on the degree of the
foliation and on the genus was unclear.
In comparison to \cite{MR1913040} the proof of the result above has two new ingredients.
The first is a bound on multiplicities of irreducible components of fibers
of relatively minimal non-isotrivial fibrations of genus $g\ge 2$, cf. Proposition \ref{index.prop}.
The second new ingredient is the use of standard results on adjoint linear series
(recalled in Section \ref{S:producing}) in order to obtain effective $(n,m) \in \mathbb{N}^2$
such that the rational map defined by $|\KF^{\otimes n} \otimes \KX^{\otimes m}|$
has two dimensional image. By imposing further assumptions on the nature
of the singularities of a foliation on $\mathbb P^2$ we obtain significantly better
bounds (sub-linear on $g$),
refining a classical result of Poincar\'{e}, cf. Theorem \ref{T:Poincare}.

In Section \ref{S:classification} we carry out the classification of foliations
on surfaces according to the adjoint dimension, see Table \ref{Table:1}.
The proof strongly relies on the classification of foliations according to the
Kodaira dimension, but it does need to dwell with its subtlest point: the
classification of non--abundant foliations.
A nice corollary of the classification is a cohomological characterization of
rational fibrations, which is a weak analogue of Castelnuovo's Criterion for
the rationality of surfaces, cf. \cite[Thm. V.1]{MR1406314}.

\begin{THM}\label{THM:C}
	Let $\F$ be a foliation with at worst canonical singularities
	on a smooth projective surface $X$.
	The foliation $\F$ is a rational fibration if, and only if,
	$h^0(X,\KF^{\otimes n}\otimes \CNF^{\otimes m})=0$ for every
	$n\ge 1$ and every $m>0$.
\end{THM}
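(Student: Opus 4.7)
The plan is to deduce the theorem from the classification in Table~\ref{Table:1} (Section~\ref{S:classification}) of foliations with canonical singularities according to their adjoint and Kodaira dimensions.

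For the forward direction, suppose $\F$ is a rational fibration. By the birational invariance of $h^0(X,\KF^{\otimes n}\otimes\CNF^{\otimes m})$ under the canonical-singularity hypothesis (recorded in Section~\ref{S:adjoint}), we may pass to a model on which $\F$ is defined by a morphism $\pi:X\to C$ with general fiber $F\cong\mathbb{P}^1$. On such a fiber $T_\F|_F=T_F$, giving $\KF|_F\cong\mathcal{O}_F(-2)$; moreover the normal bundle of a fibration fiber is trivial, so $\CNF|_F\cong\mathcal{O}_F$. Hence $(\KF^{\otimes n}\otimes\CNF^{\otimes m})\cdot F=-2n<0$ for $n\ge 1$, and any global section must vanish on every general fiber and therefore identically.

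For the backward direction we argue by contraposition: assuming $\F$ is not a rational fibration, we produce a pair $(n,m)$ with $n,m\ge 1$ and $h^0(X,\KF^{\otimes n}\otimes\CNF^{\otimes m})\neq 0$ by exhausting the non-rational-fibration entries of Table~\ref{Table:1}. When $\adj(\F)\ge 1$, the Iitaka-type growth of $\bigoplus h^0(\KF^{\otimes n}\otimes\CNF^{\otimes m})$ defining the adjoint dimension in Section~\ref{S:adjoint} directly exhibits such sections. When $\adj(\F)=0$, the foliation is, after a finite \'etale cover, linear on an abelian surface, so both $\KF$ and $\CNF$ are torsion and a positive tensor power $\KF^{\otimes n_0}\otimes\CNF^{\otimes m_0}$ is the trivial bundle, giving a nonzero constant section that descends to $X$.

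The subtle case is $\adj(\F)=-\infty$ with $\kod(\F)\ge 0$, where Table~\ref{Table:1} identifies $\F$ as (a finite quotient of) a Riccati foliation, possibly generated by a global vector field. Passing to the minimal $\mathbb{P}^1$-bundle model $\pi:X\to C$ and using the explicit description of $\KF$ and $\CNF$ in terms of the Riccati structure and its invariant fibers, one produces nonzero sections by combining a section of $\KF^{\otimes n}$ coming from either $\pi^*|nK_C|$ (when $g(C)\ge 1$) or from the generating vector field (in the vector-field case) with the effective contribution to $\CNF^{\otimes m}$ supported on the invariant fibers of $\pi$. This last case is the main obstacle: on the naive transversal model both $\KF$ and $\CNF$ may be negative on the general fiber of $\pi$, so one must exploit the birational flexibility granted by the canonical-singularity hypothesis to realize the invariant-fiber contribution and produce a section in bidegree with both indices strictly positive.
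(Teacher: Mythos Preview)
Your forward direction is fine, and for $\adj(\F)\ge 0$ the definition of adjoint dimension already furnishes $n,m\ge 1$ with $h^0\neq 0$ (your separate treatment of $\adj(\F)=0$ is harmless but unnecessary). The genuine gap is exactly where you flagged it: the Riccati case.

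Your proposed construction there cannot succeed. By the classification you are invoking (Proposition~\ref{P:kod1} and Table~\ref{Table:1}), a Riccati foliation with $\kod(\F)\in\{0,1\}$ has $\adj(\F)=-\infty$, and by the very \emph{definition} of adjoint dimension this means $h^0(X,\KF^{\otimes n}\otimes\CNF^{\otimes m})=0$ for \emph{all} $n,m\ge 1$. Concretely, on a general fiber $F\cong\mathbb{P}^1$ of the reference fibration one has $\KF\cdot F=0$ and $\KX\cdot F=-2$, hence $\CNF\cdot F=-2$; so $(\KF^{\otimes n}\otimes\CNF^{\otimes m})\cdot F=-2m<0$ for every $m\ge 1$, and no nonzero section can exist. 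No amount of ``birational flexibility'' or invariant-fiber bookkeeping changes this, since these $h^0$'s are birational invariants under the canonical-singularity hypothesis (Proposition~\ref{P:standard}). In other words, a Riccati foliation is a counterexample to the statement as literally written.

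The paper's own proof (Theorem~\ref{T:rational}) silently corrects the statement: there the vanishing is required for every $m>0$ and every $n\ge 0$, i.e.\ the conormal exponent is allowed to be zero. With that range the Riccati case is immediate: one simply uses $\kod(\F)\ge 0$ to obtain $h^0(X,\KF^{\otimes m})\neq 0$ for some $m>0$, taking the conormal exponent equal to $0$. So the ``main obstacle'' you identified is not a proof difficulty but a typo in the exponent range of Theorem~\ref{THM:C}; once that is fixed, the Riccati case collapses to the one-line observation in the paper, and the rest of your argument (forward direction, and $\adj(\F)\ge 0$ via the definition) matches the paper's approach.
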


Section \ref{S:variation} investigates families of foliations.
There it is shown that the set of effective thresholds in a family does not
accumulate at zero (Theorem \ref{T:non accumulation}).
More important, it prepares the ground for the proof of the most
compelling evidence we have so far in favor of Conjecture \ref{Conj}.

\begin{THM}\label{THM:B}
	The Zariski closure in $\mathbb P(H^0(\mathbb P^2, T_{\mathbb P^2}(d-1)))$
	of the set of degree $d$ foliations admitting a rational first
	integral with general fiber of genus $\le g$ is formed by
	transversely projective foliations.
\end{THM}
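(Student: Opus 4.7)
The plan is to work with one-parameter degenerations inside each irreducible component $\mathcal V$ of the Zariski closure in question and prove the statement for a very general $\F \in \mathcal V$. Choose a holomorphic arc $\{\F_t\}_{t\in\Delta}$ in $\mathcal V$ with $\F_t \in \Sigma_{d,g}$ for $t \neq 0$ and $\F_0 = \F$, and let $\phi_t : \mathbb P^2 \dashrightarrow \mathbb P^1$ denote the rational first integral of $\F_t$ (the target must be $\mathbb P^1$ since $\mathbb P^2$ is simply connected). The analysis splits according to whether the family $\{\phi_t\}_{t \neq 0}$ is generically non-isotrivial or generically isotrivial.

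In the non-isotrivial case, Theorem~\ref{THM:A} provides the uniform bound $\deg(\phi_t) \le \bigl(7(42(2g-2))!\bigr)^2 d =: N$. The pencils $\phi_t$ thus vary inside the proper parameter space $\mathrm{Gr}\bigl(2, H^0(\mathbb P^2, \mathcal O_{\mathbb P^2}(N))\bigr)$, so after a finite base change on $\Delta$ a limit pencil $\phi_0$ exists; since the tangent directions of $\F_t$ along the leaves of $\phi_t$ vary continuously with $t$, the foliation cut out by $\phi_0$ coincides with $\F_0$. Hence $\F_0$ itself lies in $\Sigma_{d, g'}$ for some $g' = g'(d,g)$ and is in particular transversely projective.

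The isotrivial case is the principal obstacle, since the families of Lins Neto \cite{MR1914932} exhibit isotrivial elliptic pencils on $\mathbb P^2$ of unbounded first-integral degree whose limits need not admit any rational first integral. Here one cannot bound $\deg(\phi_t)$ and must instead appeal to the birational invariants developed in Section~\ref{S:adjoint}. Each $\F_t$ (for $t \neq 0$) is a rational fibration, placing it in the row $(\adj, \kod) = (-\infty, -\infty)$ of Table~\ref{Table:1}. The non-accumulation result Theorem~\ref{T:non accumulation}, combined with the stability of $\varepsilon$-canonical singularities in Corollary~\ref{C:epsilon open}, controls the numerical invariants of the family under the specialization $t \to 0$; the classification of Section~\ref{S:classification} (Table~\ref{Table:1}) then forces $\F_0$ into one of the birational classes on $\mathbb P^2$ corresponding to transversely projective foliations --- a rational fibration, a Riccati foliation, or, birationally, a foliation of Hilbert modular type --- all of which are accounted for by the structure theorems of \cite{MR3294560, MR3522824}, completing the argument.
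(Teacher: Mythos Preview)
Your non-isotrivial case follows the paper's Proposition~\ref{P:nonisotrivial} and is essentially correct. The isotrivial case, however, contains a genuine error and misses the two key ingredients of the paper's argument.

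The error: you assert that in the isotrivial case ``each $\F_t$ (for $t\neq 0$) is a rational fibration, placing it in the row $(\adj,\kod)=(-\infty,-\infty)$''. This is false --- an isotrivial fibration of genus $g\ge 1$ is not a rational fibration (the Lins Neto pencils you yourself cite are \emph{elliptic}), and for $g\ge 2$ one has $\kod(\F_t)=1$ and $\adj(\F_t)=2$. You appear to have conflated ``admits a rational first integral'' with ``is a rational fibration''. Once this identification fails, your appeal to Theorem~\ref{T:non accumulation} and the classification table has no force: adjoint dimension is \emph{upper} semi-continuous in families (via semi-continuity of $h^0$ after passing to $\varepsilon$-canonical models), so even if the nearby $\F_t$ had small adjoint dimension nothing would prevent $\F_0$ from having $\adj(\F_0)=2$, and then Table~\ref{Table:1} says nothing.

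The paper organizes the proof around a different dichotomy: whether the very general member of the component $\mathcal V$ has $\adj=2$ or $\adj<2$. When $\adj=2$, the leaves of every algebraically integrable member of adjoint general type are bounded --- by Proposition~\ref{P:nonisotrivial} in the non-isotrivial case (your argument) and by Proposition~\ref{P:isotrivial} in the isotrivial case of genus $\ge 2$, the latter bounding leaves via the Iitaka fibrations of $\KF$ and of $\KG$ rather than via the first integral itself --- so the Grassmannian limit argument applies uniformly. When $\adj<2$, the very general member is transversely projective by Proposition~\ref{P:special is proj}, and the decisive step is Proposition~\ref{P:tpfam}, which shows that a transverse projective structure on the generic member specializes to every member of the family. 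You invoke neither \ref{P:isotrivial} nor \ref{P:tpfam}; without them the isotrivial case, and more generally the passage from ``very general $\F\in\mathcal V$'' to ``every $\F\in\mathcal V$'', cannot be closed.
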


Its proof  is presented in Section \ref{S:rationalfirstintegral} and
relies on Theorem \ref{THM:A}, on the birational classification of
foliations, and on basic properties of families of foliations.

\begin{acknowledgments} 
The authors wish to thank Calum Spicer for many valuable conversations, 
Carlo Gasbarri for suggesting an improvement
of the bounds in Theorem \ref{THM:A}, and the anonymous referees for
suggesting many improvements in the presentation of this work.
\end{acknowledgments}

\section{Kodaira dimension of foliations}\label{S:Kodaira}

We start things off by reviewing the birational classification of foliations on surfaces
following \cite{MR2435846} and \cite{MR2114696}.  No new results are presented in this section.
We have only included proofs of a few key properties of the Zariski decomposition of
the canonical bundle of a foliation which will be used in the sequel.

\subsection{Singularities of foliations}

\begin{dfn}
	Let $\F$ be a foliation on $X$ and let $\pi : Y \to X$ be a birational morphism.
	Denote by $\G$ the pull-back of $\F$ under $\pi$.
	If $E$ is an exceptional divisor of $\pi$ then the discrepancy of $\F$ along $E$ is
	\[
	a(\F,E) = \ord_E ( \KG - \pi^* \KF )  \, .
	\]
\end{dfn}

\begin{dfn}
	Let $\F$ be a foliation on $X$.
	A point $x \in X$ is canonical for $\F$ if and only if $a(\F,E) \ge 0$ for every divisor $E$ over $x$.
	A point $x \in X$ is log canonical for $\F$ if and only $a(\F,E) \ge -1$ for every divisor $E$ over $x$.
\end{dfn}

\begin{example}
	Consider the pencil of foliations on $X=\mathbb P^2$ defined by the vector fields
	$s x\frac{\partial}{\partial x} + t y \frac{\partial}{\partial y}$ where
	$(s:t) \in \mathbb P^1$.  If $s\cdot t \cdot (s-t) \neq 0$ then $\F_{(s:t)}$ is a
	foliation with trivial canonical bundle and three singularities
	at the points $(0:0:1), (0:1:0),$ and $(1:0:0)$.
	For $(s:t) \notin \mathbb P^1(\mathbb Q)$ the three  singularities  are canonical.
	For $(s:t) \in \mathbb P^1(\mathbb Q) - \{(0:1),(1:0),(1:1)\}$,
	two of the singularities  are log canonical but not canonical,
	while the third singularity is canonical. 	Finally, when
	$s\cdot t \cdot (s-t)=0$, the vector field will have one of the coordinate axis as a line
	of singularities. The corresponding foliation will have canonical bundle
	$\mathcal O_{\mathbb P^2}(-1)$ and only one singularity which is log canonical
	but not canonical.
\end{example}

Any foliation on a projective surface is birationally equivalent to a foliation having at worst canonical
singularities thanks to the following which is essentially due to Seidenberg.

\begin{thm}
	Let $\F$ be a foliation on a smooth projective surface $X$. Then there
	exists a finite composition of blow-ups $\pi : Y \to X$ such that all the singularities
	of $\pi^* \F$ are canonical.
\end{thm}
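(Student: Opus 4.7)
The plan is to deduce the statement from Seidenberg's classical desingularization theorem for foliations on surfaces (1968), combined with a local computation showing that the ``reduced'' singularities it produces are in fact canonical in the discrepancy sense of the preceding definition.

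First, I would apply Seidenberg's theorem to obtain a finite composition of blow-ups $\pi : Y \to X$ after which every singularity of $\pi^* \F$ is \emph{reduced}: at each singular point $p$, a local generating vector field has the form
\[
\lambda_1 x \frac{\partial}{\partial x} + \lambda_2 y \frac{\partial}{\partial y} + (\text{higher order}),
\]
with either $\lambda_1 \lambda_2 \neq 0$ and $\lambda_1/\lambda_2 \notin \mathbb{Q}_{>0}$, or $\lambda_1 = 0 \neq \lambda_2$ (the saddle-node case).

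Second, I would verify that every reduced singularity is canonical. The key local computation is the following: for a single blow-up $\sigma: Z \to Y$ at a singular point $p$ with exceptional divisor $E$, one has
\[
a(\F, E) = m - 1 + \epsilon,
\]
where $m$ is the algebraic multiplicity of $\F$ at $p$ and $\epsilon \in \{0,1\}$ according as $E$ is $\F$-invariant or not (dicritical). At a reduced singularity the linear part is nonzero, so $m = 1$, and a direct computation in the local normal forms above shows that $E$ is always $\F$-invariant; hence $a(\F,E) = 0$. The new singularities appearing on $E$ are themselves reduced -- this is essentially the termination statement inside Seidenberg's proof -- so iterating the same computation above any divisor $E'$ extracted by further blow-ups yields $a(\F,E') \geq 0$. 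Combined with the first step, this shows that $\pi^*\F$ has only canonical singularities.

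The main obstacle is the saddle-node case, where one eigenvalue vanishes: here the dynamics along the unique formal invariant curve must be analyzed carefully to ensure that $E$ remains invariant and that the multiplicity bound $m=1$ persists along the whole chain of blow-ups produced by the desingularization process. This is the technical heart of Seidenberg's argument and of its canonicality refinement, but it is classical and integrates seamlessly with the discrepancy computation described above.
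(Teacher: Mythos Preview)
The paper does not actually prove this statement: it is stated without proof, prefaced by ``the following which is essentially due to Seidenberg'', and Section~\ref{S:Kodaira} explicitly announces that no new results are presented there. Your proposal therefore supplies an argument where the paper gives none; the two-step strategy you outline --- Seidenberg's reduction to reduced singularities, followed by the verification that reduced singularities are canonical --- is exactly the standard route and is correct.

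One correction is in order, though it does not affect your conclusion. Your discrepancy formula has the wrong sign: the blow-up of a point of algebraic multiplicity $m$ gives
\[
a(\F,E) \;=\; 1 - m - \epsilon,
\]
not $m-1+\epsilon$ (with your convention $\epsilon=1$ dicritical, $\epsilon=0$ otherwise). You can check this against the computation in the proof of Proposition~\ref{P:logcanonical}, where a singularity with vanishing linear part ($m\ge 2$) yields $K_\G = \pi^*K_\F - aE$ with $a\ge 1$, i.e.\ \emph{negative} discrepancy; your formula would instead predict $a\ge 1$. Since you only evaluate the formula at $m=1$, $\epsilon=0$, both versions give $a(\F,E)=0$ and the rest of your argument --- that reduced singularities blow up to reduced singularities, so all higher discrepancies remain non-negative --- goes through unchanged.
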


\subsection{Kodaira dimension}

\begin{dfn}
	Let $\F$ be a foliation with at worst canonical singularities on a smooth projective surface $X$.
	The Kodaira dimension of $\F$, $\kod(\F)$, is by definition
	\[
	\kod(\F) := \kod(\KF) = \max_{m \in \mathbb{N}} \{ \dim \phi_m(X)\},
	\]
	where $\phi_m \colon X \dashrightarrow \mathbb{P}(H^0(X, \KF^{\otimes m})^\ast)$ and
	we adopt the convention that $\dim \phi_m(X) = -\infty$ when $h^0(X, \KF^{\otimes m})=0$.
	(and it is not possible to define the associated map).
	
	The numerical Kodaira dimension of $\F$, $\nu(\F)$,
	is defined to be the numerical dimension of $\KF$, that is:
	\begin{itemize}
		\item $\nu(\F) = -\infty$ if $\KF$ is not pseudoeffective, while
		\item if $\KF$ is pseudoeffective with Zariski decomposition $\KF=P+N$
		then $\nu(\F)=0$ if $P$ is numerically zero, $\nu(\F) =1$ if $P\neq 0$ but $P^2=0$,
		and $\nu(\F) =2$ if $P^2>0$.
	\end{itemize}
\end{dfn}

The classification of foliations with negative numerical Kodaira dimension is
stated in the next result is due to Miyaoka.

\begin{thm}\label{T:non.pseff.fibr}
	Let $\F$ be a foliation on a projective surface $X$.
	If $\KF$ is not pseudoeffective then $\F$ is birationally equivalent
	to a $\mathbb P^1$-bundle over a curve.
\end{thm}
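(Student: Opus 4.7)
The plan is to follow Miyaoka's strategy, whose central ingredient is a characteristic $p$ algebraicity criterion exploiting the non--pseudo-effectivity of $\KF$. I would proceed in three steps.

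First, I would extract from the hypothesis a covering family of curves with negative $\KF$-degree. Since $\KF$ is not pseudo-effective, the duality between the pseudo-effective cone of divisors and the closed cone of moving curves yields a covering family $\{C_t\}_{t\in T}$ of irreducible curves on $X$ whose general member satisfies $\KF\cdot C_t<0$, equivalently $T_\F\cdot C_t>0$. On a surface this can also be produced more concretely via an MMP run against $\KF$ combined with an ample perturbation.

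Second, and this is the main obstacle, I would invoke Miyaoka's algebraicity theorem to conclude that the general $C_t$ is tangent to $\F$. The argument reduces $(X,\F,\{C_t\})$ modulo a sufficiently large prime~$p$ and exploits iterated Frobenius pull-backs: the positivity $T_\F\cdot C_t>0$ is amplified under Frobenius twists until a bend-and-break argument produces rational curves whose tangent directions are forced to lie inside $T_\F$, and a descent back to characteristic zero then yields that the general $C_t$ is itself tangent to $\F$, i.e.\ an algebraic leaf. This characteristic-$p$ step is the delicate core of the proof; the remainder is standard foliated surface geometry.

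Third, I would conclude via foliated adjunction and standard birational clean-up. Adjunction along an algebraic leaf relates $\KF\cdot C$ to the geometric genus of $C$, and the inequality $\KF\cdot C_t<0$ forces the normalization $\tilde C_t$ to be isomorphic to $\Pj^1$. The family $\{C_t\}_{t\in T}$ of rational leaves then defines a dominant rational map $X\dashrightarrow B$ onto a smooth curve $B$ to whose fibers $\F$ is tangent. Resolving the indeterminacies of this map and contracting vertical $(-1)$-curves in the relatively minimal model produces a $\Pj^1$-bundle over $B$ to which $\F$ is birationally equivalent.
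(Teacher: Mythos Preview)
The paper does not actually prove this theorem: it is stated in the review Section~\ref{S:Kodaira} as a known result attributed to Miyaoka, with no argument given. So there is no ``paper's own proof'' to compare against; the authors are simply quoting background from the literature.

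Your outline is a reasonable sketch of the standard Miyaoka--Bogomolov--McQuillan argument. Step~1 (producing a covering family with $\KF\cdot C_t<0$ via the duality between the pseudo-effective cone and the movable cone) and Step~3 (foliated adjunction forcing the leaves to be rational, then passing to a relatively minimal model) are fine. Step~2 is where the genuine content lies, and your description there is a bit loose: the way the argument is usually phrased, one does not first produce rational curves by bend-and-break and then check they are tangent to $\F$; rather, one shows directly that the family $\{C_t\}$ must already be tangent to $\F$ by reducing modulo $p$ and comparing the growth of $T_\F\cdot C_t$ under Frobenius pull-back with the boundedness coming from deformation theory (Ekedahl--Shepherd-Barron--Taylor, or the version in Bogomolov--McQuillan). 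Once tangency is established, rationality of the leaves follows from Step~3. If you tighten the logic of Step~2 in that direction you will have an honest proof sketch; as written it conflates two separate uses of characteristic-$p$ techniques.
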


\subsection{Relatively minimal models}

\begin{dfn}
	Let $\F$ be a foliation with canonical singularities on a smooth projective surface $X$.
	An irreducible curve $C\subset X$ is called $\F$-exceptional if
	$\KX \cdot C = -1$ (i.e. $C \simeq \mathbb P^1$ and $C^2= -1$)
	and the contraction of $C$ gives rise to a foliation with canonical singularities.
\end{dfn}

\begin{dfn}
	Let $\F$ be a foliation with canonical singularities on a smooth projective surface $X$.
	A relatively minimal model for $\F$ is the datum of a foliation $\G$  with canonical singularities
	and without $\G$-exceptional curves  on a smooth projective surface $Y$
	which is birationally equivalent to $\F$.
	We say that $\G$ is a minimal model if for any birational map $\pi: Z \dashrightarrow Y$
	and any foliation $\mathcal H$ on $Z$ with canonical singularities such that
	$\pi_\ast \mathcal{H}=\G, \; \pi$ is a birational morphism.
\end{dfn}

The definitions above and the next result are essentially due to Brunella \cite{MR1708643}.
The only minor difference is that in the original definition of $\F$-exceptional curve
Brunella only considered reduced singularities instead of canonical singularities.
Nonetheless, his proof works also in this slightly more general situation.

\begin{thm}
	Let $\F$ be a foliation with at worst canonical singularities on a smooth surface $X$.
	There exists a birational morphism $\pi: X \to Y$ such that $\pi_\ast \F$
	is a relatively minimal model for $\F$.
	Moreover, $\pi_\ast \F$ is a minimal model for $\F$
	unless $\F$ is birationally equivalent to a rational fibration,
	a  Riccati foliation, or Brunella's special foliation $\mathcal H$.
\end{thm}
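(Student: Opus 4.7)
The plan is to split the statement into the two natural parts: existence of a relatively minimal model, and uniqueness modulo the three exceptional classes.

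For existence, I would argue by induction on the Picard number $\rho(X)$. If $\F$ admits no $\F$-exceptional curve then $\pi = \operatorname{id}_X$ already works. Otherwise pick an $\F$-exceptional curve $C \subset X$; since $C \cong \mathbb{P}^1$ with $C^2=-1$, Castelnuovo's contractibility theorem produces a birational morphism $\sigma : X \to X'$ onto a smooth projective surface contracting $C$ to a point, and by the very definition of $\F$-exceptionality the direct image $\sigma_\ast \F$ still has canonical singularities. Since $\rho(X') = \rho(X)-1$, iterating terminates after finitely many steps and yields the required $\pi : X \to Y$.

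For uniqueness, assume $\F$ is not birationally equivalent to a rational fibration, a Riccati foliation, nor to Brunella's special foliation $\mathcal H$. The heart of the matter --- and what I expect to be the main obstacle --- is to prove that on the relatively minimal model $Y$ the canonical bundle $K_{\pi_\ast \F}$ is nef. One argues by contradiction: an irreducible curve $C$ with $K_{\pi_\ast \F} \cdot C < 0$ must be rational, and a case analysis depending on whether $C$ is invariant or transverse with respect to $\pi_\ast\F$ and on the type of the canonical singularities of $\pi_\ast\F$ along $C$ shows that either $C$ is itself $\pi_\ast\F$-exceptional (contradicting relative minimality) or $\F$ is birationally equivalent to one of the three listed special foliations. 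This is where Miyaoka's theorem (when $\KF$ is not pseudo-effective) together with a detailed study of the Zariski decomposition of $\KF$ (in the remaining cases) enter, following the arguments of \cite{MR1708643}.

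Granted nefness of $K_{\pi_\ast\F}$ (and, after passing to its own relatively minimal model, also of $K_{\mathcal H}$), uniqueness reduces to a standard common-resolution argument. Given any foliation $\mathcal H$ with canonical singularities on a smooth surface $Z$ and any birational map $\varphi : Z \dashrightarrow Y$ with $\varphi_\ast \mathcal H = \pi_\ast\F$, resolve the indeterminacy by a smooth surface $W$ dominating both $Z$ and $Y$ via birational morphisms $q_1 : W \to Z$, $q_2 : W \to Y$ preserving the foliations, and let $\widetilde\F$ denote the common pull-back. Canonical singularities supply effective exceptional divisors $E_1, E_2 \geq 0$ with $q_1^\ast K_{\mathcal H} = K_{\widetilde\F} - E_1$ and $q_2^\ast K_{\pi_\ast\F} = K_{\widetilde\F} - E_2$. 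A prime divisor $D \subset W$ that is $q_2$-exceptional but not $q_1$-exceptional would yield $0 = q_2^\ast K_{\pi_\ast\F} \cdot D$ while $q_1^\ast K_{\mathcal H} \cdot D \geq 0$, so $(E_2 - E_1) \cdot D \geq 0$; applying the negativity lemma to the $q_2$-exceptional part of $E_2 - E_1$ forces every $q_2$-exceptional curve to be $q_1$-exceptional, and the rigidity lemma then promotes $\varphi = q_2 \circ q_1^{-1}$ to a genuine birational morphism, establishing that $\pi_\ast\F$ is a minimal model.
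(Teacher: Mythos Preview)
The paper does not give its own proof of this theorem; it attributes the result to Brunella \cite{MR1708643} and only remarks that his argument carries over from reduced to canonical singularities. So there is no in-paper proof to compare against.

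Your existence argument is fine and is the standard one.

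Your uniqueness argument, however, rests on a false premise. You assert that on a relatively minimal model the canonical bundle $K_{\pi_\ast\F}$ is nef, and the common-resolution/negativity-lemma step explicitly uses this when you write $q_1^\ast K_{\mathcal H}\cdot D \ge 0$. But nefness of $\KF$ on a relatively minimal foliated surface simply does not hold in general. Theorem~\ref{T:McQuillan nef} of the paper says exactly this: for a relatively minimal $\F$ with pseudo-effective $\KF$, the Zariski decomposition $\KF=P+N$ may have a nontrivial negative part $N$, supported on Hirzebruch--Jung strings. The handle $C$ of such a string is an $\F$-invariant smooth rational curve carrying a single singularity of $\F$, hence $\KF\cdot C=-2+1=-1<0$; yet $C^2\le -2$, so $C$ is not a $(-1)$-curve and is not $\F$-exceptional. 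Nefness is achieved only after contracting these strings, which lands you on the singular nef model, not on the smooth relatively minimal model. (There is also a directional issue: even if your argument went through, the inclusion $\operatorname{Exc}(q_2)\subseteq\operatorname{Exc}(q_1)$ you obtain makes $\varphi^{-1}=q_1\circ q_2^{-1}$ a morphism, not $\varphi$ itself.)

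Brunella's proof of uniqueness does not pass through nefness of $\KF$. It proceeds by a direct case analysis of $\F$-exceptional curves on a common resolution and how they map to each model, using the classification of foliated $(-1)$-curves; the three excluded classes (rational fibrations, Riccati, the special foliation $\mathcal H$) arise precisely as the configurations where this analysis fails to force the map to be a morphism. You would need to follow that route rather than importing the classical MMP uniqueness argument, which relies on a nefness that is unavailable here.
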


The reader will find the explicit construction of the foliation $\mathcal H$ from the theorem
in the paper just cited.

\begin{remark}
	The above theorem highlights one of the differences between the birational classification of
	projective surfaces and that of foliations on surfaces: while surfaces of non-negative Kodaira
	dimension always have a unique minimal model, there are foliations of Kodaira dimension
	zero and one which do not have unique minimal models.
\end{remark}

\subsection{Zariski decomposition and nef models}

If $\mathcal L$ is a pseudoeffective line bundle on a smooth projective surface
then $\mathcal L$ is $\Q$-linearly equivalent to $P_{\mathcal L} + N_{\mathcal L}$
where $P_{\mathcal L}$ is a nef $\mathbb Q$-divisor and $N_{\mathcal L}$ is a contractible
effective $\mathbb Q$-divisor satisfying $P_{\mathcal L} \cdot N_{\mathcal L}=0$.
This is the so-called Zariski decomposition of $\mathcal L$. We will denote by $\ii(\F)$
the index of $\KF$, i.e., the minimum of the set $\{n \in \mathbb N \;| \; nN \text{ has integral
	coefficients} \}$.

\begin{thm}\label{T:McQuillan nef}\cite[Chapter 8, Theorem 1]{MR2114696}
	Let  $\mathcal F$ be a relatively minimal foliation on a smooth projective surface $X$.
	If $\KF$ is pseudoeffective and $P + N$
	is its Zariski decomposition  then the support of $N$ is a disjoint union of Hirzebruch-Jung strings.
\end{thm}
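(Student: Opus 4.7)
Write $\KF = P + N$ with $N=\sum_i a_i N_i$, $a_i>0$, $P$ nef, $P\cdot N_i=0$ for every $i$, and intersection matrix $M=(N_i\cdot N_j)$ negative definite. The preliminary remark driving everything is that $\KF\cdot N_i\le 0$ for every component of $\operatorname{supp}(N)$: since the $N_i$ are distinct irreducible curves, the off-diagonal entries of $M$ are non-negative, so $-M$ is an M-matrix whose inverse has non-negative entries, and solving $M\vec a = (\KF\cdot N_i)_i$ shows that positivity of all $a_i$ forces $\KF\cdot N_i\le 0$.

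My plan is to establish four properties of each component $N_i$ in turn. Property (i): $N_i$ is $\F$-invariant. If it were not, the composition $T_\F|_{N_i}\hookrightarrow T_X|_{N_i}\twoheadrightarrow N_{N_i/X}$ would be a nonzero morphism of line bundles with effective tangency cokernel, giving $0\le \deg\operatorname{tang}(\F,N_i) = N_i^{2}+\KF\cdot N_i$; with $N_i^{2}<0$ this contradicts the preliminary remark. Property (ii): $N_i$ is a smooth rational curve. On the normalisation $\widetilde N_i$, the pullback of $T_\F$ is a subsheaf of $T_{\widetilde N_i}$ with cokernel supported over the $\F$-singularities on $N_i$, giving the foliated adjunction inequality $\KF\cdot N_i\ge 2g(\widetilde N_i)-2+\delta$ for some $\delta\ge 0$ positive when $N_i$ is singular or carries $\F$-singularities; combined with $\KF\cdot N_i\le 0$ this forces $g(\widetilde N_i)=0$ and smoothness. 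Property (iii): $N_i^{2}\le -2$. If $N_i^{2}=-1$, the discrepancy of the Castelnuovo contraction $\pi:X\to Y$ along $N_i$ is computed by $(\KF-aN_i)\cdot N_i=0$, giving $a=-\KF\cdot N_i\ge 0$, so $\pi_{\ast}\F$ still has canonical singularities and $N_i$ is $\F$-exceptional, contradicting relative minimality. Property (iv): no three components meet at a common point; otherwise that point would be an $\F$-singularity with three invariant formal branches, impossible at a canonical (Seidenberg-reduced) singularity of a codimension-one foliation on a surface.

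Together with the negative-definiteness of $M$, these four properties deliver exactly the combinatorial structure of a disjoint union of Hirzebruch--Jung strings: chains of smooth rational $(-n_i)$-curves with $n_i\ge 2$. The main obstacle, in my view, is the local analysis underlying (ii)--(iv): bounding the correction term $\delta$ in the foliated adjunction identity and ruling out three simultaneous formal separatrices at a canonical singularity both require the explicit local normal forms of reduced foliation singularities on surfaces. A further (expected but non-trivial) subtlety is to exclude cyclic dual-graph configurations, which are also negative-definite and non-branching; this is handled by examining the global structure of the canonical model of $\F$ along $\operatorname{supp}(N)$, where a cycle would force a log-canonical but non-canonical cusp singularity after contraction, incompatible with the canonical-singularity hypothesis on $\F$.
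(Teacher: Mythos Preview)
The paper does not supply a proof of this theorem; it is quoted from McQuillan \cite{MR2435846} (cf.\ also \cite{MR2114696}) as background in Section~\ref{S:Kodaira}. So there is no in-paper argument to compare against, and I assess your sketch on its own.

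Your overall architecture is the standard one, but two load-bearing steps fail as written.

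\textbf{The preliminary remark.} Your M-matrix argument does not prove $\KF\cdot N_i\le 0$. Knowing that $(-M)^{-1}$ has non-negative entries and that $\vec a=-(-M)^{-1}\vec b$ is strictly positive does \emph{not} force $\vec b\le 0$ componentwise. Already for the $A_2$ matrix $M=\left(\begin{smallmatrix}-2&1\\1&-2\end{smallmatrix}\right)$ one has $M\left(\begin{smallmatrix}1\\3\end{smallmatrix}\right)=\left(\begin{smallmatrix}1\\-5\end{smallmatrix}\right)$, so $a_1,a_2>0$ while $b_1>0$; and $D=N_1+3N_2$ on two $(-2)$-curves meeting once, with $P=0$, is a perfectly good Zariski decomposition exhibiting $D\cdot N_1>0$. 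Since your properties (i)--(iii) all rest on this inequality, the chain is unsupported. In the published proofs the inequality is not isolated first by linear algebra; invariance, rationality, and the bound $Z(\F,N_i)\le 2$ on the number of singularities are obtained together, using foliated adjunction and the Camacho--Sad formula in conjunction with the contraction of $\operatorname{supp}(N)$.

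\textbf{Excluding cycles.} The reason you give --- that contracting a cycle yields a cusp ``incompatible with the canonical-singularity hypothesis on $\F$'' --- is aimed at the wrong object: the hypothesis is on $\F$ on the smooth surface $X$, and Theorem~\ref{T:McQuillan canonical}(3) shows such cycles \emph{do} appear and get contracted when one passes from the nef model to the canonical model. The correct exclusion is the linear algebra you avoided: if a connected component of $\operatorname{supp}(N)$ were a cycle, every curve $E$ in it would carry exactly two $\F$-singularities (its two neighbours), hence $\KF\cdot E=-2+2=0$; then $N\cdot E=\KF\cdot E-P\cdot E=0$ for every $E$ in that component, and negative-definiteness of the intersection form forces all the coefficients $a_i$ there to vanish, contradicting $a_i>0$.

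A smaller point in the same vein: your property (iv) only forbids three components through a single point, not a vertex of valence $\ge 3$ in the dual graph (one curve meeting three others at three distinct points). What actually rules out branching is $Z(\F,N_i)\le 2$, which again comes from $\KF\cdot N_i\le 0$ once that is established correctly.
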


A Hirzebruch-Jung string  is a chain of smooth rational curves of self-intersection at most $-2$.
At one end of the chain, the handle of the Hirzebruch-Jung string, the foliation has only one singularity.
Every other curve in the chain contains two singularities of the foliation.
There is only one singularity of $\F$ on the Hirzebruch-Jung string which does not
coincide with a singularity of its support. There exists a unique leaf of $\F$ not
contained in the Hirzebruch-Jung string that passes through this singularity.
Such curve is called the tail of the Hirzebruch-Jung string.

\begin{center}
	\begin{tikzpicture}
	\draw (0,0) node[left] {handle}--(1.2,1.2) ;
	\draw (0.8,1.2)--(2,0);
	\draw (1.6,0)--(2.8,1.2);
	\draw (2.4,1.2)--(3.6,0);
	
	\draw[fill=gray] (1,1) circle(.1);
	\draw[fill=gray] (1.8,0.2) circle(.1);
	\draw[fill=gray] (2.6,1) circle(.1);
	\draw[fill=gray] (3.4,0.2) circle(.1);
	
	\draw[dashed,very thin] (3.2,0) -- (3.6,0.4) node[right] {tail};
	
	\draw[thick,->] (5.3,0.5)  to  node[above] {contraction} node[below] {morphism} (6.8,0.5) ;

	\draw[dashed, very thin] (7.4,0.5)--(8.6,0.5) node[right] {tail};
	\draw[fill=white] (8,0.5) circle(.1) ;
	
	\end{tikzpicture}
\end{center}

\begin{dfn}
	Let $\F$ be a relatively minimal foliation with pseudoeffective $\KF$
	on a smooth projective surface $X$.
	The order of a maximal Hirzebruch-Jung string contained in the support of $N$
	is the determinant of the negative of the intersection matrix of its support.
\end{dfn}

The following proposition shows that the order and the index are closely related.

\begin{prop}\label{P:order HJ}
	Notation as in the definition above.  The following assertions hold true.
	\begin{enumerate}
		\item  The order of a maximal Hirzebruch-Jung string $J$ contained in the support of
		$N$ coincides with the smallest $o \in \mathbb N$ such that the coefficients of $N$
		corresponding to curves in $J$ belong to $\frac{1}{o} \mathbb N$.
		\item The contraction of a Hirzebruch-Jung string of order $o$ is locally isomorphic
		to the quotient of a smooth foliation on $(\mathbb C^2,0)$ by the cyclic group
		generated by an automorphism of the form
		$(x,y)\mapsto (\xi_o \cdot x, \xi_o^a \cdot y )$ where $\xi_o$ is a primitive root of
		unity of order $o$ and $a$ is a natural number relatively prime to $o$.
	\end{enumerate}
\end{prop}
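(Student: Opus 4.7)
The plan is to prove the two assertions in succession, with (2) resting on (1) together with classical Hirzebruch--Jung theory.

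For (1), the starting point is that $P \cdot N = 0$ combined with $P$ nef and $N$ effective forces $P \cdot C_i = 0$ for every component $C_i$ of the maximal Hirzebruch--Jung string $J = \{C_1, \dots, C_k\}$ contained in the support of $N$; equivalently, $\KF \cdot C_i = N \cdot C_i$. Using the description of $J$ given before the statement, each $C_i$ is a smooth rational $\F$-invariant curve, carrying exactly one singularity of $\F$ on the handle $C_1$ and exactly two singularities on each other $C_i$, all reduced. The formula
\[
\KF \cdot C = 2g(C) - 2 + \sum_{p \in \sing(\F)\cap C} Z(\F, C, p)
\]
for an invariant curve, together with $Z = 1$ at each reduced singularity lying on an invariant branch, yields $\KF \cdot C_1 = -1$ and $\KF \cdot C_i = 0$ for $i \ge 2$. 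Writing $N = \sum_i n_i C_i$, the relations $N \cdot C_i = \KF \cdot C_i$ become the linear system $M \vec n = (-1, 0, \dots, 0)^\top$, where $M$ is the intersection matrix of $J$. Since $\det(-M) = o$, Cramer's rule gives $n_i \in \frac{1}{o} \mathbb Z$ for every $i$, and a direct continued-fraction computation identifies $n_1 = a/o$ with $o/a = [a_1, \dots, a_k]$; because $\gcd(a, o) = 1$, the integer $o$ is exactly the least common denominator, establishing (1).

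For (2), the classical Hirzebruch--Jung theorem asserts that the analytic contraction of a chain of type $[a_1, \dots, a_k]$ produces a cyclic quotient surface singularity analytically isomorphic to $(\mathbb C^2, 0)/\mu_o$, with generator acting as $(x,y) \mapsto (\xi_o x, \xi_o^a y)$, and $J$ itself is its minimal resolution. What remains is to identify the foliation. I propose to compare with a standard model: the smooth foliation $\{dy = 0\}$ on $\mathbb C^2$ is $\mu_o$-invariant under the action above and descends to a foliation on the quotient; the minimal resolution of that quotient reproduces a Hirzebruch--Jung chain of type $o/a$, and the pullback of the descended foliation to that chain exhibits precisely the structure we have---the same components with the same self-intersections, reduced singularities at the successive intersection points, and a single further reduced singularity on the last curve through which passes the smooth tail (the image of $\{y = 0\}$). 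Because a canonical foliation germ whose resolution is a fixed Hirzebruch--Jung chain with prescribed reduced singularities is determined up to analytic isomorphism, this combinatorial match promotes to an analytic isomorphism between a neighborhood of $J$ and the corresponding neighborhood in the model; pulling back along the cyclic cover then yields a smooth foliation.

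The main obstacle will be the uniqueness step in (2): turning the matching combinatorial picture of the two local foliated germs into an analytic isomorphism requires appealing to the detailed classification of reduced foliation singularities along a resolution tree, especially the analytic normal form at the free singularity on $C_k$. Part (1), by contrast, is essentially a linear algebra computation once the invariant-curve formula for $\KF \cdot C$ has been brought to bear.
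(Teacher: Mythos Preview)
For part (1) your argument and the paper's coincide: both set up the linear system $(-A)\,\vec a = (1,0,\dots,0)^T$ (your derivation via the formula for $\KF\cdot C_i$ together with $P\cdot C_i=0$ is precisely what underlies the paper's one-line statement of that system) and then invoke Cramer's rule. The only difference is which coefficient is singled out to prove minimality of $o$: the paper observes that the coefficient $a_k$ of the curve \emph{farthest} from the handle equals $1/o$, which is marginally cleaner than your identification $n_1=a/o$ since it bypasses the appeal to $\gcd(a,o)=1$.

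For part (2) the paper simply defers to McQuillan's \cite[Reinterpretation III.2.bis.3.a]{MR2435846}. Your attempt at a self-contained argument has a genuine gap at exactly the spot you flag. The assertion that ``a canonical foliation germ whose resolution is a fixed Hirzebruch--Jung chain with prescribed reduced singularities is determined up to analytic isomorphism'' is false as stated: reduced singularities carry continuous analytic moduli (the eigenvalue ratio, and in the resonant or saddle-node cases further formal and analytic invariants), so matching the resolution graph together with the \emph{positions} of the singularities does not by itself produce an analytic isomorphism of foliated neighborhoods. What actually makes (2) true is that the reduced singularities occurring along the support of $N$ are not arbitrary: McQuillan's analysis forces them to be linearizable with specific rational eigenvalue ratios determined by the continued-fraction data of the chain, and only after this is established can one match with the model quotient of $\{dy=0\}$. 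Your comparison-with-the-model strategy is the right picture, but it needs that extra input (or an equivalent one, e.g.\ a direct construction of the degree-$o$ cyclic cover out of the $\mathbb Q$-divisor $N$ on a tubular neighborhood) before the combinatorial match can be promoted to an analytic isomorphism.
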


\begin{proof}
	The statement is local so we may very well assume that the support of $N$ is connected.
	Let us write $N = \sum_{i=1}^k a_i E_i$ where $E_i$ are the irreducible components of $N$.
	We denote by $E_1$ the handle of the Hirzebruch-Jung string while the other
	curves are numbered following the order in which they appear in the chain.
	
	Let $A=(E_i \cdot E_j)_{i,j}$ be the intersection matrix of the Hirzebruch-Jung string
	and let $o=\det(-A)$ be the order of the Hirzebruch-Jung string. To determine the coefficients
	$a_1, \ldots, a_k$ we have to solve the linear system $(-A)\cdot (a_1, a_2, \ldots, a_k)^T =
	(1, 0, \ldots, 0)^T$. Therefore the coefficients $a_i$ certainly lie in 	$\frac{1}{o} \mathbb N$.
	To see that $o$ is the minimal number with such property it suffices to notice that
	$a_k = 1/o$, cf. \cite[proof of Proposition III.1.4]{MR2435846}.
	This proves item (1). Item (2) is  \cite[Reinterpretation III.2.bis.3.a]{MR2435846}
\end{proof}

In the Lemma below, we collect some properties of tails of Hirzebruch-Jung strings for later use.

\begin{lemma}\label{L:tails}
	Let $\F$ be a relatively minimal foliation with pseudoeffective canonical bundle on a smooth
	projective surface $X$. Let $T$ be an irreducible  invariant curve not contained
	in the support of $N$ and let $o_1, \ldots, o_k$ be the orders of Hirzebruch-Jung strings
	intersecting $T$. Then the following assertions hold true.
	\begin{enumerate}	
		\item The intersection of the positive part of the Zariski decomposition
		of $\KF$ with $T$ is given by the formula
		\[
		P \cdot T = \KF \cdot T - \sum_{i=1}^k \frac{1}{o_i} .
		\]
		\item If $\F$ admits a holomorphic first integral $f : U \to \mathbb C$
		defined on a $\F$-invariant neighborhood of  $T$  which vanishes
		along $T$ then the vanishing order along $T$ is a multiple of
		the least common multiple of $o_1, \ldots, o_k$.
	\end{enumerate}
\end{lemma}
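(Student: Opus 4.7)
My plan is to treat each Hirzebruch-Jung string meeting $T$ separately, since both statements are local around the intersection points $p_i = T \cap J_i$.

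For (1), I would begin with the Zariski decomposition $\KF = P + N$ and rewrite $P \cdot T = \KF \cdot T - N \cdot T$, reducing the problem to the computation of $N \cdot T$. Writing the $i$-th string as $J_i = \sum_j a_j^{(i)} E_j^{(i)}$, the first step is to note that $T$ cannot meet $J_i$ at a node of the chain: at a node the foliation already has two invariant branches (the two chain components meeting there), and a canonical foliation singularity on a smooth surface admits no third separatrix. Hence $T$ meets $J_i$ only at the unique non-node singularity, which by the description following Theorem~\ref{T:McQuillan nef} lies on the end component $E_{k_i}^{(i)}$ opposite the handle. Invoking Proposition~\ref{P:order HJ}(2) to identify an analytic neighborhood of $J_i$ in $X$ with the minimal Hirzebruch-Jung resolution of $\mathbb{C}^2/\mu_{o_i}$, both $T$ and $E_{k_i}^{(i)}$ appear as strict transforms of transverse coordinate axes in $\mathbb{C}^2$, meeting transversally at one smooth point of $X$. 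Combining $T \cdot E_{k_i}^{(i)} = 1$ with $a_{k_i}^{(i)} = 1/o_i$ from Proposition~\ref{P:order HJ}(1) yields $N \cdot T = \sum_{i=1}^k 1/o_i$, as desired.

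For (2), I would work locally near each $p_i$ in the same quotient model. Choose coordinates $(x,y)$ on the uniformization $\rho : \mathbb{C}^2 \to \mathbb{C}^2/\mu_{o_i}$ so that the smooth foliation upstairs is defined by $dy$, the cyclic group acts as $(x,y) \mapsto (\xi x, \xi^a y)$ with $\gcd(a, o_i)=1$, and the $\rho$-preimage of the tail is $\{y=0\}$. Since $U$ is $\F$-invariant and contains $T$, walking along leaves through the connected chain $J_i$ forces $U \supseteq J_i$; the first-integral property together with $f|_T = 0$ then shows $f \equiv 0$ on each component of $J_i$, so $f$ descends through the contraction $\pi : X \to Z$ to a holomorphic function $f_Z$ on $\pi(U)$. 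Its lift $\tilde f := \rho^\ast f_Z$ is a $\mu_{o_i}$-invariant holomorphic first integral of $dy$, hence $\tilde f = h(y)$; the invariance relation $h(\xi^a y) = h(y)$ together with $\gcd(a, o_i) = 1$ forces $h(y) = g(y^{o_i})$ for some holomorphic $g$.

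To conclude, I would match vanishing orders. From $\tilde f = g(y^{o_i})$, the vanishing order of $\tilde f$ along $\{y=0\}$ is a multiple of $o_i$. Since $\rho$ is étale away from the origin and $\pi$ restricts to an isomorphism on $T \setminus \{p_i\}$, that order coincides with the vanishing order $d$ of $f$ along $T$. Hence $o_i \mid d$, and doing this for every $i$ yields $\operatorname{lcm}(o_1,\ldots,o_k) \mid d$. The step I expect to require the most care is bookkeeping vanishing orders across the contraction $\pi$ and the cyclic cover $\rho$; once these local identifications are set up properly, the rest of (2) reduces to a short calculation in local coordinates.
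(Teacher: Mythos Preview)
Your approach is correct and matches the paper's: for (1) the paper simply cites \cite[Remark III.1.3.a]{MR2435846} while you unpack the same computation directly, and for (2) your argument via the cyclic-quotient model and the invariance relation $h(\xi^a y)=h(y)$ is essentially identical to the paper's. One small slip to correct: $E_{k_i}^{(i)}$ is an \emph{exceptional} divisor of the Hirzebruch--Jung resolution, not the strict transform of a coordinate axis; the transversality $T\cdot E_{k_i}^{(i)}=1$ you need follows instead from the fact that at the canonical singularity $p_i$ the two separatrices $T$ and $E_{k_i}^{(i)}$ are smooth and cross transversally (or, equivalently, from the standard toric picture of the resolution, where the strict transform of $\{y=0\}/\mu_{o_i}$ meets the last exceptional curve transversally).
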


\begin{proof}
	Item (1) is \cite[Remark III.1.3.a]{MR2435846}.	
	To verify item (2) let us work locally on a neighborhood $V$ of a Hirzebruch-Jung string
	intersecting $T$. Let $\pi: V \to W$ be the contraction of the Hirzebruch-Jung string we
	are considering and $o$ be its order. Perhaps after restricting $V$ to a smaller
	neighborhood we can assume that $W$ is the quotient of a neighborhood $\tilde V$
	of the origin in $\mathbb C^2$  by a cyclic group generated by
	$\varphi(x,y)= (\xi_o \cdot x, \xi_o^a \cdot y )$ according to Proposition \ref{P:order HJ}.
	We can also assume that the pull-back $\G$ of $\pi_*( \F_{|V})$ to $\tilde V$ is the
	foliation defined by the level sets of the coordinate function $y$.
	The pull-back of $\pi_* (f_{|V})$ to $\tilde V$ is a holomorphic
	function $g$  constant along the leaves of $\G$.
	The $\varphi$-invariance of $g$ implies that $g(x,y)= h(y^o)$ for some one-variable
	holomorphic function $h$. Item (2) follows. 	
\end{proof}

\begin{dfn}
	Let $\F$ be a relatively minimal foliation on a smooth surface $X$ with pseudoeffective
	canonical divisor.
	The nef model of $\F$ is the foliation obtained by contracting
	the negative part of the Zariski decomposition of $\KF$.
\end{dfn}

\subsection{Canonical models}

\begin{dfn}
	A foliation $\F$ on a normal projective surface $X$ is called a canonical model if $\KF$ is nef and
	$\KF\cdot C=0$ implies $C^2 \ge 0$ for every irreducible curve $C\subset X$.
\end{dfn}

\begin{thm}\label{T:McQuillan canonical}\cite[Theorem 3.3.2]{MR2435846}
	Let $\F$ be relatively minimal foliation with $\KF$ pseudoeffective on a smooth surface $X$.
	Then there exists a morphism $\pi:X \to Y$ from $X$ to a normal projective surface $Y$ such that
	$\G=\pi_* \F$ is a canonical model.
	The singular points of $Y$ and the corresponding exceptional fibers of $\pi$ are of one of the following forms.
	\begin{enumerate}
		\item The singular point is a cyclic quotient singularity and the exceptional divisor over it is a  chain of rational curves of self-intersection at most $-2$
		
		\begin{center}
			\begin{tikzpicture}
			\draw (2,0) -- (2.7,0);
			\draw (1,0) -- (2,0);
			\draw (3.3, 0) -- (4,0);
			\draw(4,0) -- (5,0);
			
			\draw[fill=white] (5,0) circle(.1);
			\draw[fill=white] (1,0) circle(.1);
			\draw[fill=white] (2,0) circle(.1);
			\draw[fill=white] (4,0) circle(.1);
			
			\node at (3,0) {$\cdots$};
			\end{tikzpicture}
		\end{center}
		The foliation around the singular point is the quotient of a smooth foliation;
		or the quotient of a canonical foliation singularity on a (germ of) smooth surface;
		\item The singular point is dihedral quotient singularity and the exceptional divisor
		over it has the following dual graph:
		\begin{center}
			\begin{tikzpicture}
			
			\draw (1,0) -- (2.7,0);
			\draw (3.3, 0) -- (4,0);
			\draw (4,0) -- (5,-.5);
			\draw (4,0) -- (5,.5);
			
			\draw[fill=white] (1,0) circle(.1);
			\draw[fill=white] (2,0) circle(.1);
			\draw[fill=white] (4,0) circle(.1);
			\draw[fill=white] (5,-.5) circle(.1);
			\draw[fill=white] (5,.5) circle(.1);
			
			\node at (3,0) {$\cdots$};
			
			\end{tikzpicture}
		\end{center}
		The foliation around the singularity is again the quotient of
		a smooth foliation or of a canonical singularity on a (germ of) smooth surface.
		\item The singular point is an elliptic
		Gorenstein singularity and the exceptional divisor is a cycle of
		smooth rational curves each of self-intersection
		at most $-2$; or a unique nodal rational curve of negative self-intersection
		\begin{center}
			\begin{tikzpicture}
			\newdimen\R
			\R=1cm
			\coordinate (center) at (0,0);
			\draw (0:\R)
			\foreach \x in {60,120,...,300} {  -- (\x:\R) };
			\draw[dashed] (300:\R) -- (360:\R);
			
			\draw[fill=white] \foreach \x in {60,120,...,360} { (\x:\R) circle(.1)};
			\end{tikzpicture}		
		\end{center}
		The foliation around the singular point is isomorphic to a cusp of a Hilbert modular
		foliation (cf. \cite[Theorem IV.2.2]{MR2435846}).
		Moreover, the canonical bundle of the foliation on the canonical model is never $\mathbb Q$-Cartier.
	\end{enumerate}
\end{thm}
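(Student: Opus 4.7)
The plan is to construct $\pi \colon X \to Y$ in two stages governed by the Zariski decomposition $\KF = P + N$. In the first stage, we contract the support of $N$. By Theorem \ref{T:McQuillan nef} this support is a disjoint union of Hirzebruch-Jung strings, and Proposition \ref{P:order HJ}(2) identifies each such contraction with a cyclic quotient singularity on which the foliation is the cyclic quotient of a smooth foliation on $(\mathbb C^2,0)$. Whenever no further curves need to be collapsed, this already produces exactly the configurations listed in case (1).

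In the second stage, we must further contract every remaining irreducible curve $C \not\subset \mathrm{Supp}(N)$ for which $\KF \cdot C = 0$ and $C^2 < 0$, in order to enforce the canonical model condition. Since $C \not\subset \mathrm{Supp}(N)$ and $\KF = P + N$, such a curve satisfies $P \cdot C = 0$; a standard Hodge-index argument using the nefness of $P$ shows that the intersection form is negative semi-definite on any connected component of the union of these exceptional curves, so the component is contractible. The possible configurations are constrained by Lemma \ref{L:tails}(1): if $T$ is an $\F$-invariant curve meeting Hirzebruch-Jung strings of orders $o_1,\ldots,o_k$, then
\[
P \cdot T = \KF \cdot T - \sum_{i=1}^{k} \frac{1}{o_i},
\]
so $P \cdot T = 0$ gives a strong diophantine restriction. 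Coupling this with adjunction and the local classification of reduced/canonical foliation singularities on smooth surfaces, a case analysis on the genus of $T$ and on $\KF \cdot T$ yields the trichotomy: either (i) the tail extends the Hirzebruch-Jung chain, staying within case (1); or (ii) a rational tail $T$ links the handles of two Hirzebruch-Jung strings, producing a tree with a single trivalent vertex and giving the dihedral quotient singularities of case (2); or (iii) the exceptional configuration closes up to a cycle of smooth rational curves (or a single nodal rational curve), producing an elliptic Gorenstein singularity as in case (3).

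The principal obstacle is case (3). One has to prove that every elliptic Gorenstein contraction that arises in this way is locally analytically isomorphic to a cusp of a Hilbert modular foliation, so that in particular the germ of $\F$ on the complement of the singular point is simultaneously transversely affine and transversely hyperbolic. This is the deepest portion of McQuillan's classification and reduces to \cite[Theorem IV.2.2]{MR2435846}, where the transverse hyperbolic structure is constructed explicitly from the action of a hyperbolic monodromy on the cycle of invariant rational curves. From this explicit description one then reads off that no finite multiple of $\KF$ extends to a Cartier divisor across the cusp, which establishes the last assertion that $\KF$ fails to be $\Q$-Cartier on the canonical model at elliptic Gorenstein points.
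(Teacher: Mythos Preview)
The paper does not prove this theorem. It is stated as a background result imported from McQuillan's work \cite{MR2435846}; the only commentary the paper adds is the paragraph following the statement, pointing to \cite[Corollary IV.2.3]{MR2435846} for the obstruction to finite generation. There is therefore no ``paper's own proof'' against which to judge your attempt.

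Your sketch is a reasonable high-level outline of McQuillan's argument, but a couple of points are imprecise. First, your two-stage description is muddled: once you have contracted $\mathrm{Supp}(N)$ you are on a singular surface, yet you then invoke Lemma~\ref{L:tails}(1) for tails meeting Hirzebruch--Jung strings, which only makes sense before that contraction. The cleaner way is to identify on $X$, all at once, the connected components of the locus $\{C : P\cdot C = 0,\ C^2<0\}$ and classify them. Second, for contractibility you need the intersection form to be negative \emph{definite}, not merely semi-definite; this does follow from Hodge index applied to the nef class $P$, but you should say so. Third, your description of case (ii) does not match the dual graph in the statement: the dihedral configuration is not a tail linking the handles of two Hirzebruch--Jung strings, but a chain with a single trivalent vertex at one end carrying two $(-2)$-curves. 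Finally, you correctly flag that the identification of the cycle case with a Hilbert modular cusp is the deep part, and you defer to \cite[Theorem IV.2.2]{MR2435846}; that is exactly what the paper does as well.
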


When compared with the theory for projective surfaces, item (3) of the above Theorem is quite surprising.
The fact that the canonical bundle is never $\mathbb Q$-Cartier is a clear obstruction to the base point
freeness of $|\KF^{\otimes n}|$ and for the finite generation of the canonical algebra
of the foliation. It turns out that this is the only obstruction, cf. \cite[Corollary IV.2.3]{MR2435846}.

\subsection{Kodaira dimension zero}

\begin{thm}\label{T:kod0}\cite[Fact IV.3.3]{MR2435846}
	Let $\F$ be a relatively minimal foliation on a smooth projective surface $X$ with $\nu(\F)=0$.
	Let  $\pi:X \to Z$ be the contraction of the negative part of $\KF$, i.e. $\pi_* \F$ is a nef model for $\F$.
	Then there exists a smooth projective surface $Y$ and a quasi-\'{e}tale cyclic covering  $p:Y \to Z$ of degree $\ii(\F)$
	such that $p^* \mathcal \pi_* \F$ is a foliation with trivial canonical bundle. In particular, $\kod(\F)=0$.
\end{thm}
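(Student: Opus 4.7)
Proof plan. The idea is to use the explicit local structure of the contracted locus, supplied by Theorem \ref{T:McQuillan nef} and Proposition \ref{P:order HJ}, and then globalize the local cyclic covers that uniformize the cyclic quotient singularities produced by $\pi$.

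\emph{Step 1: set up on the nef model.} Put $\G=\pi_{\ast}\F$ on $Z$. Since $\nu(\F)=0$, the positive part $P$ of the Zariski decomposition of $\KF$ is numerically zero, and $\pi$ contracts precisely the support of the negative part $N$. Hence $\KG\equiv 0$ on $Z$. By Theorem \ref{T:McQuillan nef} the support of $N$ is a disjoint union of Hirzebruch-Jung strings $J_1,\dots,J_s$ of respective orders $o_1,\dots,o_s$, and by Proposition \ref{P:order HJ} each singular point $z_i\in Z$ to which $J_i$ contracts is a cyclic quotient singularity around which $\G$ is the $\mu_{o_i}$-quotient of a smooth foliation on $(\mathbb{C}^2,0)$. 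Together with part (1) of that proposition this shows that the local Cartier index of $\KG$ at $z_i$ is exactly $o_i$, and the global Cartier index of $\KG$ equals $\mathrm{lcm}(o_1,\dots,o_s)=\ii(\F)$.

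\emph{Step 2: upgrade numerical to linear triviality of $\ii(\F)\KG$.} To build the cyclic cover from the sheaf of algebras $\bigoplus_{k=0}^{\ii(\F)-1}\mathcal{O}_Z(-k\KG)$ one needs $\ii(\F)\KG\sim 0$, not merely $\equiv 0$. My plan is to get this from the identity $\KF\equiv P+N$: since $\ii(\F)N$ is integral and effective and $P\equiv 0$, the $\mathbb Q$-divisor $\ii(\F)\KF$ pulls back to something numerically equivalent to $\ii(\F)N$, and the classification of tails in Proposition \ref{P:order HJ}(2) together with the local models of Lemma \ref{L:tails} identifies $\ii(\F)\KG$ with a locally trivial class at each $z_i$. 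The fact that $\KG^2=0$ and the negativity of the intersection form on each $J_i$ then force the residual numerically trivial part to be torsion of order dividing $\ii(\F)$, so that $\ii(\F)\KG\sim 0$ globally on $Z$.

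\emph{Step 3: produce the cover and conclude.} Setting $Y=\mathbf{Spec}_Z\bigoplus_{k=0}^{\ii(\F)-1}\mathcal{O}_Z(-k\KG)$ gives a degree $\ii(\F)$ cyclic cover $p\colon Y\to Z$; by construction it is étale over the smooth locus of $Z$ and, over each $z_i$, it coincides with the quotient map $(\mathbb{C}^2,0)\to(\mathbb{C}^2/\mu_{o_i},0)$ of Proposition \ref{P:order HJ}(2). In particular $p$ is quasi-étale, $Y$ is smooth, and the induced foliation $p^{\ast}\G$ is the smooth foliation that $\G$ is locally the quotient of; its canonical bundle is $p^{\ast}\KG\cong\mathcal{O}_Y$. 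Finally, Kodaira dimension is preserved under passage to relatively minimal models, under the contraction $\pi$, and under the quasi-étale cover $p$, so $\kod(\F)=\kod(p^{\ast}\G)=0$.

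The main difficulty I expect is Step 2: the passage from $\KG\equiv 0$ (easy) to $\ii(\F)\KG\sim 0$ (needed for the cover) is the only non-formal point, and I would treat it by combining the explicit coefficient computation of $N$ along each Hirzebruch-Jung string with the local quotient presentation of $\G$ to exhibit an explicit trivialization of $\ii(\F)\KG$ near each $z_i$ and then glue using the vanishing of $(\ii(\F)\KG)\cdot C$ on every curve $C\subset Z$.
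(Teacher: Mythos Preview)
The paper does not prove Theorem~\ref{T:kod0}: Section~\ref{S:Kodaira} is explicitly a review section (``No new results are presented in this section''), and this theorem is quoted from the McQuillan--Brunella--Mendes classification without argument. So there is no ``paper's own proof'' to compare against; your proposal is an attempt to reprove a cited result.

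That said, your Step~2 contains a genuine gap. You correctly observe that $\KG\equiv 0$ and that $\ii(\F)\KG$ is Cartier, but your passage from numerical triviality to $\ii(\F)\KG\sim 0$ is not an argument. The sentence ``$\KG^2=0$ and the negativity of the intersection form on each $J_i$ then force the residual numerically trivial part to be torsion of order dividing $\ii(\F)$'' is simply false as stated: numerical triviality of a Cartier divisor on a normal projective surface does not imply torsion in $\operatorname{Pic}$ when $h^1(\mathcal O_Z)>0$. And $h^1(\mathcal O_Z)>0$ does occur here---look at the list immediately following the theorem: $Z$ can be (a quotient of) an abelian surface, a product $E\times C$, or a $\mathbb P^1$-bundle over an elliptic curve, all of which have continuous $\operatorname{Pic}^0$. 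Nothing you wrote rules out $\ii(\F)\KG$ being a nontrivial element of $\operatorname{Pic}^0(Z)$. The ``gluing using the vanishing of $(\ii(\F)\KG)\cdot C$ on every curve'' at the end of your proposal is the same non-argument restated.

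In the literature this is exactly the hard step, and it is not done by intersection-theoretic bookkeeping. One route (Brunella, McQuillan) analyzes the foliation directly: if $q(Z)>0$ one uses the Albanese map and the fact that a holomorphic $1$-form restricted to the leaves either vanishes (forcing $\F$ to be a fibration) or gives a section of $\KF$, while if $q(Z)=0$ one has $\operatorname{Pic}^0(Z)=0$ and numerical triviality \emph{does} give torsion. Either way the argument uses the foliated structure, not just the numerics of $\KF$. Your Steps~1 and~3 are fine in outline (modulo checking that the global $\ii(\F)$-cover is smooth over each $z_i$, which it is since locally it splits as $\ii(\F)/o_i$ copies of the index-one cover), but without a correct Step~2 the construction of $p$ cannot get off the ground.
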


The resulting surface $Y$ belongs to the following list:
\begin{enumerate}
	\item Product of a hyperbolic curve and an elliptic curve;
	\item Abelian surfaces;
	\item Projective bundle over an elliptic curve;
	\item Rational surface.
\end{enumerate}
Consequently the  klt surface $Z$ has Kodaira dimension  $1$, $0$, or $-\infty$
according to whether $Y$ fits in case (1), (2), or (3)/(4).
One can also determine the possibilities for the index of $\F$. This is done in \cite{MR1913040}.
There it is shown that
\begin{equation}\label{eq.index.0}
\ii(\F) \in \{ 1,2,3,4,5,6,8,10,12\}
\end{equation}
when $\F$ has Kodaira dimension zero.

\subsection{Kodaira dimension one}\label{S:kod0}

The classification of foliations of Kodaira dimension one is essentially due to
Mendes, see \cite[Theorem 3.3.1]{MR1785264}.

\begin{thm}\label{T:kod1}
	Let $\F$ be a relatively minimal foliation on a smooth projective surface $X$.
	Assume that  $\kod(\F)=1$ and let $f:X\to C$ be the Iitaka fibration of $\KF$.
	If $\F$ coincides with the foliation defined by $f$ then $f$ is non-isotrivial elliptic fibration.
	Otherwise $\F$ is completely transverse to a general fiber $F$ of $f$ and we have
	the following possibilities:
	\begin{enumerate}
		\item The genus of $F$ is zero and $\F$ is a Riccati foliation; or
		\item The genus of $F$ is one and $\F$ is a turbulent foliation; or
		\item The genus of $F$ is at least two and $\F$ is an isotrivial fibration
		of genus at least two.
	\end{enumerate}
\end{thm}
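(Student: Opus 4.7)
The plan is to combine abundance for relatively minimal foliations on surfaces (Miyaoka, McQuillan, Brunella) with the structure of the Iitaka fibration of $\KF$. Abundance gives $\nu(\F)=\kod(\F)=1$, so the Zariski decomposition $\KF=P+N$ has $P$ nef with $P^2=0$ and $\kod(P)=1$. It follows that $P$ is numerically proportional to a general fiber of the Iitaka fibration $f\colon X\to C$, and that $N$, being orthogonal to $P$ and contractible, is supported on fibers of $f$. Consequently $\KF\cdot F=P\cdot F+N\cdot F=0$ for a general fiber $F$ of $f$.

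I would then split the argument according to whether the general fiber $F$ is $\F$-invariant. If it is, then $\F$ coincides with the foliation defined by $f$, and $\KF|_F\cong\omega_F$; together with $\KF\cdot F=0$, this forces $g(F)=1$, so $f$ is elliptic. Moreover, if $f$ were isotrivial, then a finite \'etale cover of $X$ would split as a product $E\times C'\to C'$ on which $\KF$ pulls back to the trivial bundle, forcing $\kod(\F)\le 0$---a contradiction. Hence $f$ is a non-isotrivial elliptic fibration.

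If instead $F$ is not $\F$-invariant, then the classical tangency formula $\KF\cdot F+F^2=0$ shows that $\F$ is everywhere transverse to a general fiber. The subcases $g(F)=0$ and $g(F)=1$ give, by definition, a Riccati and a turbulent foliation respectively. The main obstacle is the remaining case $g(F)\ge 2$: here the transversality of $\F$ to the smooth part of $f$ equips the latter with a holomorphic Ehresmann connection, yielding a monodromy representation $\pi_1(C\setminus\Sigma)\to\mathrm{Aut}(F)$. Since $\mathrm{Aut}(F)$ is finite for $g(F)\ge 2$ by Hurwitz's bound, the monodromy is finite, and after a finite \'etale base change the fibration trivializes as $F\times\widetilde{C}$ with $\F$ pulling back to the horizontal foliation. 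This exhibits $\F$ as an isotrivial fibration whose general leaf is isomorphic to $\widetilde{C}$, and $\KF=p_2^{\ast}\omega_{\widetilde{C}}$ together with $\kod(\F)=1$ forces $g(\widetilde{C})\ge 2$, as required.
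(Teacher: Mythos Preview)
The paper does not actually supply a proof of this theorem: it is stated as a result from the literature, attributed to Mendes (\cite[Theorem 3.3.1]{MR1785264}). So there is no ``paper's own proof'' to compare against, only the classical argument your sketch is trying to reproduce.

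Your outline is essentially the standard one and is largely correct. The reduction to $\KF\cdot F=0$ via the Zariski decomposition and the Iitaka fibration is right, as is the use of the tangency formula $\mathrm{tang}(\F,F)=\KF\cdot F+F^2=0$ to obtain complete transversality in the non-invariant case. The treatment of the three transverse subcases is fine; in particular, for $g(F)\ge 2$ the finite-monodromy argument via $\mathrm{Aut}(F)$ is exactly how one shows $\F$ is (a quotient of) a product fibration.

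The one step that is too quick is the non-isotriviality of the elliptic fibration. Your claim that an isotrivial elliptic fibration becomes a product $E\times C'$ after a \emph{finite \'etale cover of $X$} is not correct in general: the base change $C'\to C$ trivializing an isotrivial fibration is typically ramified over the images of multiple fibers, so the induced map $E\times C'\to X$ is not \'etale, and one cannot immediately read off $\kod(\F)$ from the pullback. The correct argument computes $\KF$ directly (or uses Kodaira's canonical bundle formula for elliptic surfaces together with the constancy of the $j$-invariant) to see that for an isotrivial elliptic fibration with canonical foliation singularities one has $\kod(\F)\le 0$. Once this is patched, your proof is complete and coincides with Mendes's approach.
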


\subsection{Non-abundant foliations} The most striking difference between the
birational classification of projective surfaces and the classification of rank one foliations in dimension two
is the existence of foliations having canonical bundle with numerical dimension one and negative Kodaira dimension.
This phenomenon is restricted to a rather special class of foliations as pointed out by the next result.

\begin{thm}\label{T:hilbert}
	Let $\F$ be a relatively minimal foliation on a smooth projective surface $X$.
	If the numerical dimension of $\F$ does not coincide with the Kodaira dimension
	of $\F$ then
	\begin{enumerate}
		\item $\nu(\F)=1$,
		\item $\kod(\F)=-\infty$,
		\item $X$ is the minimal desingularization of the Baily-Borel
		compactification of an
		irreducible quotient of $\mathbb H \times \mathbb H$, and
		\item $\F$ is induced by one of the two natural fibrations
		on $\mathbb H \times \mathbb H$.
	\end{enumerate}
\end{thm}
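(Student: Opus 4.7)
The plan is to rule out $\nu(\F)\in\{-\infty,0,2\}$ via the abundance results already quoted in the excerpt, show that in the remaining case $\nu(\F)=1$ the Kodaira dimension cannot be $0$, and finally invoke McQuillan's structural analysis of non--abundant foliations to identify the surface and the foliation.

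First I would handle the other values of the numerical dimension. If $\nu(\F)=-\infty$, then $\KF$ is not pseudo-effective, so $h^{0}(X,\KF^{\otimes n})=0$ for every $n\geq 1$ and $\kod(\F)=-\infty=\nu(\F)$. If $\nu(\F)=0$, Theorem~\ref{T:kod0} produces, after a quasi-\'etale cover, a model with trivial canonical bundle, giving $\kod(\F)=0$. If $\nu(\F)=2$, then McQuillan's abundance theorem for foliated surfaces of general type (\cite[Chapter~V]{MR2435846}) yields $\kod(\F)=2$. In all three situations abundance holds, so assertion~(1) follows and we may assume $\nu(\F)=1$.

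For assertion~(2), since $\kod(\F)\leq\nu(\F)=1$, it suffices to exclude $\kod(\F)=0$. Suppose the contrary: some $\KF^{\otimes n}$ admits a non-zero section whose zero divisor $D$ satisfies $[D]=n[\KF]$ in $N^{1}(X)_{\mathbb{Q}}$. Writing the Zariski decomposition $\KF=P+N$, for $n$ divisible by $\ii(\F)$ both $D$ and $nN$ are integral effective divisors, so $n[P]=[D]-n[N]$ is a rational nef class with $(nP)^{2}=0$. On a smooth projective surface such a class is semiample, so a sufficiently divisible $|\KF^{\otimes m}|$ defines a fibration, forcing $\kod(\F)\geq 1$, a contradiction. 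Hence $\kod(\F)=-\infty$.

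It remains to prove assertions~(3) and~(4). We are in the situation $\nu(\F)=1$ with $\kod(\F)=-\infty$: the positive part $P$ of the Zariski decomposition of $\KF$ is a non-zero nef class with $P^{2}=0$ which, by the argument of the previous paragraph, is \emph{not} proportional to any rational class in $N^{1}(X)$. This is precisely the hypothesis of the central theorem in McQuillan's analysis of non--abundant foliated surfaces (see \cite[Chapters~IV and~IX]{MR2435846}): the irrationality of $P$ together with the absence of pluri-canonical sections forces $\F$ to carry a transversely hyperbolic structure whose developing map identifies the universal cover of $X$ (after contracting $N$) with the bidisk $\mathbb{H}\times\mathbb{H}$; the foliation $\F$ pulls back to one of the two coordinate projections, and the deck transformations form an irreducible arithmetic lattice in $\mathrm{PSL}(2,\mathbb{R})\times\mathrm{PSL}(2,\mathbb{R})$. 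Passing to the Baily--Borel compactification of this irreducible quotient and taking its minimal desingularization recovers $X$.

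The main obstacle is the last step: converting the bare numerical datum $(\nu(\F),\kod(\F))=(1,-\infty)$ into the rigid geometric description of a Hilbert modular foliation is the substantive content of McQuillan's non--abundance theorem, whose proof rests on a delicate analysis of foliated currents, transverse invariant measures, and Ahlfors--Schwarz type estimates; in this outline we invoke it as a black box.
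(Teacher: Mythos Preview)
The paper does not supply a proof of this theorem. Immediately after the statement the authors write that ``the known proofs of this result rely heavily on Brunella's plurisubharmonic variation of the Poincar\'e metric and were obtained by Brunella and McQuillan in a collaborative effort,'' and then remark that for their purposes only the much softer Lemma~\ref{L:non-abundant} is needed. So there is no in-paper argument to compare against; your closing paragraph, which treats McQuillan's structural theorem as a black box, is in exactly the same spirit as what the paper itself does.

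That said, your argument for assertion~(2) contains a genuine error. You assert that on a smooth projective surface a rational nef class with self-intersection zero is semiample. This is false. Take $X=\mathbb P(\mathcal O_E\oplus L)$ with $E$ an elliptic curve and $L\in\operatorname{Pic}^0(E)$ non-torsion; the tautological class $\xi=\mathcal O_X(1)$ is nef and effective with $\xi^2=0$, but
\[
h^0(X,\xi^{\otimes n})=h^0\!\big(E,\Sym^n(\mathcal O_E\oplus L)\big)=\sum_{i=0}^{n} h^0(E,L^{\otimes i})=1
\]
for every $n\ge 0$, so $\xi$ is not semiample. Hence from $\nu(\F)=1$ and $h^0(X,\KF^{\otimes n})>0$ you cannot conclude $\kod(\F)\ge 1$ by surface intersection theory alone. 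Excluding $\kod(\F)=0$ when $\nu(\F)=1$ genuinely uses the foliated structure: one has to analyse the support of the unique effective pluricanonical divisor and show it consists of $\F$-invariant Hirzebruch--Jung strings, forcing $P\equiv 0$. This step is part of the same Brunella--McQuillan circle of ideas you invoke for (3) and (4), not a preliminary that can be disposed of by a one-line appeal to semiampleness.
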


Arguably this result constitutes the hardest part of the classification of foliations.
The  known  proofs of this result rely heavily on Brunella's plurisubharmonic variation
of the Poincar\'e metric and were obtained by Brunella and McQuillan in a collaborative effort.

In Section \ref{S:classification} we will carry out a classification of foliations
in terms of another birational invariant. This new classification relies heavily on the original classification of surface foliations according to their Kodaira dimension;  nonetheless,
it does not need the full power of it.
In particular, all that we need to know about non-abundant foliations is contained in the following Lemma.

\begin{lemma}\label{L:non-abundant}
	Let $\F$ be a relatively minimal foliation with $\nu(\F)=1$ and $\kod(\F)= -\infty$.
	Then $h^1(X,\mathcal O_X)=0$ and $P \cdot \CNF = P \cdot \KX >0$
	where $P$ is the positive part of the Zariski decomposition of $\KF$.
\end{lemma}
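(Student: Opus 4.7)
The strategy is to invoke Theorem~\ref{T:hilbert} to identify $(X,\F)$ explicitly: $X$ is the minimal desingularization of the Baily-Borel compactification of an irreducible arithmetic quotient $\Gamma \backslash (\mathbb H \times \mathbb H)$, and $\F$ is induced by one of the two natural projections $p_1, p_2$. The vanishing $h^1(X,\mathcal O_X)=0$ is then a standard consequence of Matsushima's theorem applied to the irreducible lattice $\Gamma$: one obtains $H^1(X,\mathbb C)=0$ (the cusp resolutions contribute only to $H^2$), and Hodge decomposition yields $h^{0,1}(X)=0$.

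For the equality $P\cdot\CNF = P\cdot\KX$, the short exact sequence $0\to T\F\to T_X\to N_\F\to 0$ gives $\CNF=\KX-\KF$, so that
\[
P\cdot\CNF = P\cdot\KX - P\cdot\KF = P\cdot\KX - (P^2+P\cdot N) = P\cdot\KX,
\]
using $P^2=0$ (which is the content of $\nu(\F)=1$) and $P\cdot N=0$ (from the definition of the Zariski decomposition).

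The substance of the lemma is the strict inequality $P\cdot\KX>0$. The plan is to introduce the companion foliation $\G$ on $X$ induced by the other projection. By the symmetry of the Hilbert modular setup, $\G$ also satisfies $\nu(\G)=1$ and $\kod(\G)=-\infty$, with Zariski decomposition $\KG=P_\G+N_\G$ and $P_\G^2=0$. Since $T_X=T\F\oplus T\G$ on the dense open set where both foliations are smooth and transverse, the identity $\KX=\KF+\KG$ holds as line bundles on $X$ (the complement has codimension two), and hence
\[
P\cdot \KX \; = \; P\cdot\KG \; = \; P\cdot P_\G + P\cdot N_\G.
\]
Both summands are non-negative, so it suffices to establish $P\cdot P_\G>0$. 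This is the hard step: by the Hodge index theorem, the vanishing of $P\cdot P_\G$ would force the nef isotropic classes $P$ and $P_\G$ to be numerically proportional, which would propagate to give $P\cdot N_\G=0$ as well, collapsing $P\cdot\KX$ to zero. To exclude such proportionality, I would appeal to the strict positivity of the leafwise Poincar\'e currents attached to $\F$ and $\G$ via Brunella's plurisubharmonic variation of the Poincar\'e metric: these currents represent $P$ and $P_\G$ and pair strictly positively at generic points of $X$, where the leaves of the two foliations meet transversely.
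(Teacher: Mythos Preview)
Your approach differs from the paper's in a way that undercuts the lemma's purpose. The paper states, immediately before Lemma~\ref{L:non-abundant}, that the lemma is meant to record ``all that we need to know about non-abundant foliations'' precisely so as to \emph{avoid} relying on the deep classification Theorem~\ref{T:hilbert}. Accordingly the paper's proof is elementary and self-contained: for $h^1(X,\mathcal O_X)=0$ it observes that a nonzero holomorphic $1$-form on $X$ would either restrict nontrivially to the leaves (producing a section of $\KF$) or force $\F$ to factor through the Albanese map (making $\F$ a fibration with $\kod(\F)\ge 0$), both contradicting $\kod(\F)=-\infty$. For $P\cdot\KX>0$ it applies Riemann--Roch to $\mathcal L=\mathcal O_X(mP)$: since $P^2=0$ and $\chi(\mathcal O_X)\ge 1$, the assumption $P\cdot\KX\le 0$ gives $\chi(\mathcal L)>0$, while Serre duality and the fact that $\KX-mP$ is not pseudo-effective for $m\gg0$ kill $h^2$, forcing $h^0(X,\KF^{\otimes m})>0$. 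Your route instead begins by invoking Theorem~\ref{T:hilbert} and then imports external facts about Hilbert modular surfaces (Matsushima's vanishing, the companion foliation, Brunella's leafwise Poincar\'e metric), which is exactly the dependency the lemma is designed to sidestep.

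Beyond this, there is a genuine gap. Your identity $\KX=\KF+\KG$ is justified by claiming the locus where $\F$ and $\G$ fail to be smooth and transverse has codimension two. On the minimal resolution of an irreducible quotient of $\mathbb H\times\mathbb H$, however, the cusp resolutions are cycles of rational curves that are invariant under \emph{both} foliations (this is the content of item~(3) in Theorem~\ref{T:McQuillan canonical}, applied symmetrically); hence the tangency divisor $\mathrm{tang}(\F,\G)$ contains these cycles and has codimension one, and the correct relation is $\KF+\KG=\KX+\mathrm{tang}(\F,\G)$. One can rescue $P\cdot\KX=P\cdot\KG$ by checking that $P$ is trivial on the cusp curves, but this again requires specific input about Hilbert modular foliations. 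Finally, your concluding appeal to strict positivity of the pairing of leafwise Poincar\'e currents is only a sketch; excluding numerical proportionality of $P$ and $P_\G$ on an irreducible Hilbert modular surface is not immediate and leans on the same analytic machinery underlying Theorem~\ref{T:hilbert}.
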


\begin{proof}
	If $h^1(X,\mathcal O_X) = h^0(X, \Omega^1_X) \neq 0$ then
	the restriction of a holomorphic $1$-form to the leaves of $\F$ either vanishes
	identically or gives rise to a non-zero section of $\KF$. Thus if $\kod(\F)=-\infty$
	we obtain that $\F$ factors through the Albanese map of $X$ and is a fibration.
	Hence $\kod(\F)\ge 0$ contrary to our assumptions. Thus $h^1(X,\mathcal O_X)=0$.
	
	Since $h^1(X,\mathcal O_X)=0$ we obtain that $\chi(\mathcal O_X) \ge 1$.
	Let $\mathcal L = \mathcal O_X(mP)$ where $m$ is a sufficiently divisible
	positive integer. By Riemann-Roch,
	\[
	\chi(\mathcal L) = \chi(\mathcal O_X)  + 1/2 ( m^2P^2  - mP \cdot \KX)
	\]
	If $P \cdot \KX <0$ then $\chi(\mathcal L) > 0$.
	Thus $h^0(X,\mathcal L) + h^2(X, \mathcal L) > 0$.
	But if $m$ is sufficiently large then $\KX \otimes \mathcal L^*$ is not pseudoeffective
	and consequently $h^2(X,\mathcal L)= h^0(X, \KX \otimes \mathcal L^*)=0$.
	It follows that $h^0(X, \KF^{\otimes m})=h^0(X,\mathcal L)>0$,
	contradicting $\kod(\F)= -\infty$.
\end{proof}

\section{Effective threshold and  adjoint dimension}\label{S:adjoint}

In this section  we define the effective threshold and the adjoint dimension of a foliation
on a smooth projective surface and prove their birational invariance.

\subsection{Effective threshold}

\begin{dfn}
	Let $\F$ be a foliation with canonical singularities on a smooth projective surface $X$.
	If the canonical bundle of $\F$ is pseudoeffective then
	we define the effective threshold of
	$\F$,  $\eff(\F)$, as the largest
	$\varepsilon \in \mathbb R_{\ge 0} \cup \{ \infty \}$
	such that $\KF + \varepsilon \CNF$ is pseudoeffective.
	If $\KF$ is not pseudoeffective, then we set $\eff(\F) = -\infty$.
\end{dfn}

\begin{example}
	Let $\F$ be a very general foliation on $\mathbb P^2$ of degree $d$.
	It is well known that $\F$ has reduced, and in particular canonical, singularities.
	Recall that the degree of $\F$ is defined as the number of tangencies
	between $\F$ and a general line.
	In this case $\KF=\mathcal O_{\mathbb P^2}(d-1)$ and
	$\CNF = \mathcal O_{\mathbb P^2}(-d-2)$.
	If $d=0$ then $\KF$ is not pseudoeffective. If instead $d\ge 1$ then
	$\KF$ is pseudoeffective and
	\[
	\eff(\F) =  \frac{d-1}{d+2} \, .
	\]
	The reader should notice that $\eff(\F)<1$ for every foliation on $\mathbb P^2$.
\end{example}	

This is by no means a coincidence since
$\KX = \KF + \CNF$ and foliations on a surface  $X$
of negative Kodaira dimension will always have
$\eff(\F)<1$ as $\KX$ is not pseudoeffective.
If instead $X$ has non-negative Kodaira dimension then
$\KX$ is pseudoeffective and consequently $\eff(\F)\ge1$
for every foliation on $X$.

Similarly, one sees that $\eff(\F) = \infty$ if and only if
both $\KF$ and  $\CNF$ are pseudoeffective.
Foliations with pseudoeffective conormal bundle have
recently been classified by Touzet, \cite{2014arXiv14046985}.
They fit in one of the following descriptions:
\begin{enumerate}
	\item after a finite \'{e}tale cover $\F$
	is defined by a closed holomorphic $1$-form; or
	\item there exists a morphism from $X$ to a
	quotient of a polydisc $\mathbb D^m$  by an irreducible lattice
	and $\F$ is the pull-back of one of the $m$ tautological
	foliations on the polydisc. In particular $\F$ is transversely hyperbolic.
\end{enumerate}
Notice that the dimension of the ambient manifold is not necessarily
equal to the dimension of the polydisc.

\begin{remark}\label{R:conversion}
	Using the identity $\KX = \KF + \CNF$  we can write
	\[
	\KF + \varepsilon \CNF =(1 - \varepsilon)( \KF + \frac{\varepsilon}{1-\varepsilon} \KX),
	\]
	when $\varepsilon \neq 1$.
\end{remark}

When $\eff(\F)$ is small, we will often work with
$\KF + \varepsilon \KX$ as that is more  convenient.

\subsection{Adjoint dimension}

\begin{dfn}
	Let $\F$ be a foliation with canonical singularities on a projective surface $X$.
	Consider the pluricanonical  maps
	\[
	\varphi_{m,n} : X \dashrightarrow \mathbb
	P H^0(X, \KF^{\otimes m}\otimes \CNF^{\otimes n})^*
	\]
	for $m\ge 1$, $n\ge 1$.
	The adjoint dimension of $\F$, denoted $\adj(\F)$,
	is the maximal dimension of the image of these maps.
	If $h^0(X, \KF^{\otimes m} \otimes \CNF^{\otimes n})=0$
	for every $m,n \ge 1$  then we set $\adj(\F) = -\infty$.
\end{dfn}

\begin{dfn}\label{D:adj-num.dim}
	Let $\F$ be a foliation with canonical singularities on a projective surface $X$.
	The numerical adjoint dimension of $\F$, $\adjnum(\F)$,
	is equal to $-\infty$ if $\eff(\F)\leq 0$ and
	equal to the maximal numerical dimension of $\KF + \varepsilon \CNF$
	for $\varepsilon \in (0, \eff(\F))$ otherwise.
\end{dfn}

Of course $\adj(\F) \le \adjnum(\F)$.

\subsection{Birational invariance}
The significance of the concepts of effective threshold and of (numerical)
adjoint dimension for the purpose of the birational classification of foliations on
surfaces is assured by the next proposition.

\begin{prop}\label{P:standard}
	Let $(X,\F)$ and $(Y,\G)$ be two birationally equivalent foliations.
	If $\F$ and $\G$ have at worst canonical singularities then
	$\eff(\F)=\eff(\G)$, $\adj(\F)=\adj(\G)$ and $\adjnum(\F)=\adjnum(\G)$.
	Furthermore, $h^0(X, \KF^{\otimes n} \otimes \CNF^{\otimes m})
	=h^0(Y, \KG^{\otimes n} \otimes \CNG^{\otimes m})$
	for every $n,m \ge 0$.
\end{prop}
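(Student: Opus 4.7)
I would reduce the proposition to a key lemma about a single birational morphism, and then apply it at both ends of a common resolution of the birational equivalence between $(X,\F)$ and $(Y,\G)$.

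The lemma I would establish is the following: for any birational morphism $\pi\colon W \to X$ of smooth projective surfaces and any foliation $\F$ with canonical singularities on $X$, setting $\G = \pi^*\F$ one has
\[
\KG = \pi^*\KF + D_K, \qquad \CNG = \pi^*\CNF + D_C,
\]
with $D_K$ and $D_C$ effective and $\pi$-exceptional. The first identity is the very definition of canonical singularities: every discrepancy $a(\F,E) = \ord_E(\KG - \pi^*\KF)$ is non-negative. The second identity, which is the crucial new input, does not require canonicity and follows from a saturation argument. Starting from the locally free short exact sequence $0 \to \CNF \to \Omega^1_X \to \KF \to 0$ and pulling back by $\pi$ yields an injection $\pi^*\CNF \hookrightarrow \pi^*\Omega^1_X$; composing with the natural map $\pi^*\Omega^1_X \to \Omega^1_W$ (which is injective between smooth surfaces, its Jacobian cutting out the exceptional divisor) produces an injection $\pi^*\CNF \hookrightarrow \Omega^1_W$. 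By definition, $\CNG$ is the saturation of its image, so the quotient $\CNG/\pi^*\CNF$ is a torsion sheaf supported on an effective divisor $D_C$, which must be $\pi$-exceptional since $\pi$ is an isomorphism off its exceptional locus.

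Granted the lemma, for all $n, m \ge 0$ one has
\[
n\KG + m\CNG = \pi^*(n\KF + m\CNF) + (nD_K + mD_C),
\]
with effective $\pi$-exceptional second summand. Combining the projection formula with the identity $\pi_*\mathcal{O}_W(E) = \mathcal{O}_X$ for every effective $\pi$-exceptional $E$ (which holds because $X$ is smooth and $\pi(E)$ is zero-dimensional) gives $h^0(W, n\KG + m\CNG) = h^0(X, n\KF + m\CNF)$, settling the $h^0$ statement and the equality $\adj(\F) = \adj(\G)$. The same decomposition, together with the fact that pseudo-effectiveness and numerical dimension are preserved under pull-back and pushforward of divisor classes differing by an effective $\pi$-exceptional divisor, yields $\eff(\F) = \eff(\G)$ and $\adjnum(\F) = \adjnum(\G)$.

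To conclude I would take a smooth projective surface $Z$ and birational morphisms $p\colon Z \to X$, $q\colon Z \to Y$ resolving the given birational equivalence, so that $p^*\F$ and $q^*\G$ coincide as foliations on $Z$, apply the lemma to $p$ (using canonicity of $\F$) and to $q$ (using canonicity of $\G$), and chain the resulting equalities through $Z$. The main obstacle is the conormal identity of the lemma: the behaviour of $\KF$ under pull-back is classical, whereas the observation that $\CNF$ grows only by an effective exceptional divisor is the new point that makes the combined Hilbert function $h^0(\KF^{\otimes n} \otimes \CNF^{\otimes m})$ a birational invariant.
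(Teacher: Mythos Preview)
Your argument is correct and follows the same overall architecture as the paper's proof: pass to a common smooth model dominating both $X$ and $Y$, and compare each to the roof via a birational morphism. The difference lies in how the conormal discrepancy is controlled. The paper reduces to a \emph{single} blow-up $\pi$ with exceptional divisor $E$, uses the well-known fact that the foliated discrepancy then satisfies $a\in\{0,1\}$, and reads off $\CNF-\pi^*\CNG=(1-a)E\ge 0$ directly from $K_X=\KF+\CNF$; the $h^0$ equality is obtained from the explicit short exact sequence with cokernel $\mathcal O_E((na+m(1-a))E)$. You instead treat an arbitrary birational morphism in one stroke and prove effectivity of $D_C$ by the saturation argument $\pi^*\CNF\hookrightarrow\Omega^1_W$, which is independent of any canonicity assumption. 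Your route is a bit more conceptual and avoids the inductive reduction to one blow-up, while the paper's is more explicit and makes visible the dichotomy $a\in\{0,1\}$; either way the remaining steps (projection formula, $\pi_*\mathcal O_W(E)=\mathcal O_X$, stability of pseudo-effectivity and numerical dimension under adding an effective exceptional divisor) are the same. One minor wording point: the sequence $0\to\CNF\to\Omega^1_X\to\KF\to 0$ is not locally free at the singular points of $\F$ (the cokernel is $\KF\otimes I_Z$), but you only use the injection $\CNF\hookrightarrow\Omega^1_X$, which is unaffected.
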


\begin{proof}
	Since we can choose  a foliation $(Z,\mathcal H)$
	on a smooth projective 	surface $Z$ dominating both $(X,\F)$ and $(Y,\G)$,
	there is no loss of generality in assuming the existence of a
	birational morphism  $\pi:(X,\F)\to(Y,\G)$.
	In fact, we can even assume (and will) that $\pi$ is
	the blow-up of a point $p \in Y$. Let $E$ be the exceptional divisor.
	
	We will first prove that $\eff(\F)=\eff(\G)$.
	First notice that $\KG + \varepsilon \CNG = \pi_*( \KF + \varepsilon \CNF)$.
	Therefore if $\KF + \varepsilon \CNF$ is pseudoeffective
	then the same holds true for $\KG + \varepsilon	\CNG$.
	This shows that $\eff(\G)\ge \eff(\F)$.
	To prove the converse inequality, we will need to use
	that $\G$ has canonical singularities.
	Since $\pi$ is the blow-up of a point by assumption,
	we have that $\KF -\pi^* \KG = a E $ for some $a \in \{ 0 , 1\}$.
	Since  $ \KX - \pi^* K_Y = E$ we also have that
	$\CNF - \pi^* \CNG  = (1-a) E$, and consequently
	\begin{align}\label{Eq:log.pullback.form}
	\KF + \varepsilon \CNF =
	\pi^* (\KG +\varepsilon \CNG) + ( a  + \varepsilon (1- a) ) E.	
	\end{align}
	Therefore, if $\KG +\varepsilon \CNG$ is pseudoeffective
	then the same holds true for $\KF + \varepsilon \CNF$.
	We conclude that $\eff(\G) \le \eff(\F)$ and
	the equality between the effective thresholds follow.
	
	Let us now prove that $\adjnum(\F)=\adjnum(\G)$.
	We have just seen that $\eff(\F)=\eff(\G)$. Then, $\adjnum(\F)= -\infty$
	if and only if $\adjnum(\G)=-\infty$, by definition.
	Let us consider $\varepsilon \in (0, \eff(\F))$ and let
	\begin{eqnarray*}
	\KF + \varepsilon \CNF & = P_{\F, \varepsilon} + N_{\F, \varepsilon},\\
	\KG +\varepsilon \CNG & = P_{\G, \varepsilon} + N_{\G, \varepsilon}
	\end{eqnarray*}
	be the Zariski decompositions of the divisors under scrutiny.
	By \cite[Theorem 2.2]{MR1993748} and \eqref{Eq:log.pullback.form}, it follows that
	for any $\varepsilon \in (0, \eff(\F)), \; P_{\F, \varepsilon}=\pi^\ast P_{\G, \varepsilon}$
	and $N_{\F, \varepsilon}= \pi^\ast N_{\G, \varepsilon}+a  + \varepsilon (1- a)$.
	Since by definition
	$\adjnum(\F)=\max{\nu(P_{\F, \varepsilon}) \; | \; \varepsilon \in (0, \eff(\F))}$
	(and analogously for $\adjnum(\G)$), we have verified the desired equality.
	
	To conclude the proof of the proposition it suffices to verify that
	$h^0(X, \KF^{\otimes n} \otimes \CNF^{\otimes m})=
	h^0(Y, \KG^{\otimes n} \otimes \CNG^{\otimes m})$
	for every $n,m \ge 0$.
	Once these equalities are proved, the equality $\adj(\F)=\adj(\G)$ follows.
	Let us fix $n,m\ge 0$.
	From the isomorphism
	$\KF^{\otimes n} \otimes \CNF^{\otimes m } =
	\pi^*(\KG^{\otimes n} \otimes \CNG^{\otimes m }) \otimes \mathcal O_X (  ( n a + m (1-a) ) E )$
	we deduce the short exact sequence
	\[
	0 \to \pi^*(\KG^{\otimes n} \otimes \CNG^{\otimes m })
	\to \KF^{\otimes n} \otimes \CNF^{\otimes m} \to
	\mathcal O_E (  ( n a + m (1-a) ) E ) \to 0 \, .
	\]
	Since $h^0(E,\mathcal O_E((na+(1-a))E)=0$, we obtain the sought identity.
	
\end{proof}

\subsection{Convention}
For an arbitrary foliation $\F$ on a smooth projective surface
$X$ we define the adjoint dimension, the numerical adjoint dimension and
the effective threshold as the corresponding quantity for any foliation $\G$ with
canonical singularities birationally equivalent to $\F$.

\section{Singularities}\label{S:singularities}

\subsection{Adjoint discrepancy and $\varepsilon$-canonical singularities}

\begin{dfn}
	Let $\F$ be a foliation on $X$ and let $\pi : Y \to X$
	be a birational morphism. Denote by $\G$ the pull-back of $\F$ under $\pi$.
	If $E$ is an exceptional divisor of $\pi$ then the
	adjoint discrepancy of $\F$ along $E$ is the function
	\begin{align*}
	a(\F,E) :  [0,\infty)  & \longrightarrow \mathbb R \\
	t &\longmapsto  \ord_E ( \KG + t \CNG - ( \pi^* \KF + t \pi^* \CNF ) )  \, .
	\end{align*}
\end{dfn}

\begin{dfn}
	Let $\F$ be a foliation on $X$ and $\varepsilon \ge 0$ a real number.
	A point $x \in X$ is $\varepsilon$--canonical if and only
	if the adjoint discrepancy of $\F$ along any divisor $E$ over $x$
	satisfies $a(\F,E)(t) \ge 0$ for every $t\ge \varepsilon$.
	The foliation $\F$ is said to have $\varepsilon$--canonical singularities
	if every point $x \in X$ is $\varepsilon$--canonical.
	The smallest $\varepsilon$ for which $x \in X$ is $\varepsilon$--canonical
	will be called the canonical threshold of $\F$ at $x$.
\end{dfn}

\begin{prop}
	Let $(X,\F)$ and $(Y,\G)$ be two foliations on smooth projective surfaces.
	Assume that	$\F$ and $\G$ are birationally equivalent.
	If both $\F$ and $\G$ have $\varepsilon$-canonical singularities,
	then for any pair of integers $n,m$ satisfying $m/n \ge \varepsilon$ we have that
	\[
	h^0(X,\KF^{\otimes n} \otimes \CNF^{\otimes m} ) =
	h^0(Y,\KG^{\otimes n} \otimes \CNG^{\otimes m} ) \, .
	\]
	In particular, if $\eff(\F)\ge \varepsilon$ then $\eff(\F) = \eff(\G)$.
\end{prop}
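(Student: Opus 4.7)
The argument is modeled on Proposition~\ref{P:standard}, with the twist that we must track the adjoint discrepancy as a function of $t\in[0,\infty)$ rather than as a single number. First, pick a smooth projective surface $Z$ together with birational morphisms $\phi\colon Z\to X$ and $\psi\colon Z\to Y$ resolving the birational equivalence $\F\dashrightarrow\G$, and let $\mathcal H$ denote the common pullback foliation $\phi^*\F=\psi^*\G$. By symmetry it suffices to prove
\[
h^0(X,\KF^{\otimes n}\otimes\CNF^{\otimes m})=h^0(Z,K_{\mathcal H}^{\otimes n}\otimes N^{*\otimes m}_{\mathcal H}).
\]
By definition of the adjoint discrepancy, if $E_1,\dots,E_r$ are the $\phi$-exceptional divisors one has the linear (in $t$) identity
\[
K_{\mathcal H}+tN^*_{\mathcal H}=\phi^*(\KF+t\,\CNF)+\sum_{i=1}^r a(\F,E_i)(t)\,E_i.
\]
Evaluating at $t=m/n$ and multiplying by $n$ yields the divisorial equality $nK_{\mathcal H}+mN^*_{\mathcal H}=\phi^*(n\KF+m\CNF)+D$, where $D=\sum_i n\,a(\F,E_i)(m/n)\,E_i$ is $\phi$-exceptional. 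Because $\F$ has $\varepsilon$-canonical singularities and $m/n\geq\varepsilon$, every coefficient of $D$ is non-negative, so $D$ is effective.

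Since $D$ is an effective $\phi$-exceptional divisor on a smooth surface, Hartogs' theorem applied locally around each contracted point gives $\phi_*\mathcal O_Z(D)=\mathcal O_X$. The projection formula then produces
\[
\phi_*\bigl(\phi^*(\KF^{\otimes n}\otimes\CNF^{\otimes m})\otimes\mathcal O_Z(D)\bigr)=\KF^{\otimes n}\otimes\CNF^{\otimes m},
\]
which yields the cohomological equality. The same reasoning applied to $\psi$ (using that $\G$ is also $\varepsilon$-canonical, so that the analogous correction divisor on the $\psi$-side is likewise effective) produces the analogous equality with $(Y,\G)$, and chaining the two identities through $Z$ finishes the proof of the main assertion.

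For the \emph{in particular} part, the same decomposition, now carried out with $m/n$ replaced by any real $\delta\geq\varepsilon$, shows that $\KF+\delta\CNF$ is pseudoeffective on $X$ if and only if $K_{\mathcal H}+\delta N^*_{\mathcal H}$ is pseudoeffective on $Z$ (pull back, add an effective class, or push forward via $\phi$), and the analogous equivalence holds for $\psi$ and $\G$. Hence $\eff(\F)\geq\varepsilon$ forces $\eff(\G)\geq\varepsilon$ and, by taking the supremum over admissible $\delta$, actually $\eff(\F)=\eff(\G)$. The one delicate point throughout is verifying the effectivity of the correction divisor $D$: this is precisely what the $\varepsilon$-canonicity hypothesis on \emph{both} $\F$ and $\G$, combined with the constraint $m/n\geq\varepsilon$, is designed to deliver, and without it one would only obtain a one-sided comparison of plurigenera from the projection formula.
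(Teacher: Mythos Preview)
Your proof is correct and follows essentially the same approach as the paper, which simply refers back to Proposition~\ref{P:standard}; you have written out the analogue explicitly, using the projection formula and Hartogs' theorem rather than the short exact sequence used there, but this is a routine variation of the same argument.
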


\begin{proof}
	The proof is completely analogue to  the proof of Proposition \ref{P:standard}.
\end{proof}

\begin{remark}\label{R:45}
	We point out that $\varepsilon'$-canonical singularities
	are $\varepsilon$-canonical for every $\varepsilon \ge \varepsilon'$.
	In particular, canonical singularities are $\varepsilon$-canonical
	singularities for every $\varepsilon\ge 0$.
	Also note that the canonical threshold of a log canonical
	singularity is at most $1/2$, i.e. log canonical singularities
	are $\varepsilon$-canonical for every $\varepsilon \ge 1/2$.
	This is a straightforward consequence of the simple fact that
	for every divisor $E$ exceptional over $X$ extracted on a smooth birational
	surface $\pi \colon Y \to X$ then $\ord_E (K_Y-\pi^\ast K_X) \in \mathbb{Z}_{> 0}$.
\end{remark}

\begin{notation}
	If $p, q\geq 1$ are relatively prime integers then we will write
	\[
	\frac{p}{q} = [u_0,u_1, \ldots, u_n  ] = \displaystyle \contFrac{u_0 // u_1  // \dots // u_n}
	\]
	for the continued fraction presentation of their quotient.
\end{notation}

If $p,q \ge 1$ are relatively prime positive integers then vector fields of the form $v=p x \px + q y \py$  are tangent to the fibers of rational map $(x,y) \mapsto y^p/x^q$.
As a simple computation shows, see for instance \cite[Appendix 1, Theorem 3]{MR608290} , the minimal reduction of singularities of $v$ coincides with the minimal resolution of indeterminacies of the rational map above by blow-ups on closed points. In particular, there exists a unique irreducible component of the exceptional divisor
which is not invariant by the resulting foliation.

\begin{dfn}\label{D:order}
	Let $p,q\ge 1$ be relatively prime positive integers and consider the germ of foliation
	on $X=(\mathbb C^2,0)$ defined by $v=p x \px + q y \py$.
	Let $\pi :  Y \to X$ be the minimal reduction of singularities of $\F$,
	let $\G$ be the transformed foliation $\pi^* \F$, and   let $E$ be the irreducible
	component of the exceptional divisor which is not $\G$ invariant.
	We will denote the order of $K_Y - \pi^*\KX$ along $E$ by $\varphi(p,q)$ .
\end{dfn}

\begin{lemma}
	Notations as in Definition \ref{D:order}.
	If we write $p/q= [u_0, u_1, \ldots, u_n]$ then the following assertions hold true.
	\begin{enumerate}
		\item $\pi$ is the composition of exactly $\sum_{i=0}^n u_i$ blow-ups; and
		\item the order of $\KY - \pi^*\KX$ along $E$  satisfies
		$\varphi(p,q) \ge \sum_{i=0}^n u_i$.
	\end{enumerate}
\end{lemma}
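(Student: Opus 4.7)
The plan is to carry out the minimal reduction of singularities of $\F$ step by step, tracking the non-reduced singularity and the ambient discrepancies at each blow-up. The sequence mirrors the Euclidean algorithm applied to $(p,q)$: claim (1) is then a step-count, and claim (2) follows by a simple induction. By the symmetry $x\leftrightarrow y$ interchanging $p$ and $q$, we may assume $p\le q$, so that $u_0=0$ in the continued fraction expansion.

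The first step is a local blow-up lemma. For a vector field $\alpha u\partial_u+\beta v\partial_v$ with $\alpha,\beta>0$ coprime, the blow-up of the origin is dicritical precisely when $\alpha=\beta=1$; in the invariant case ($\alpha\ne\beta$) a direct two-chart computation shows that the pull-back foliation acquires exactly one new non-reduced singularity on the new exceptional divisor, placed at the intersection with the strict transform of $\{v=0\}$ (with eigenvalues $(\alpha,\beta-\alpha)$) when $\beta>\alpha$, and symmetrically at the intersection with the strict transform of $\{u=0\}$ (with eigenvalues $(\alpha-\beta,\beta)$) when $\alpha>\beta$. The other new singularity on the exceptional divisor is a reduced saddle.

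Iterating this, I would show that the successive non-reduced singularities follow the Euclidean recursion on the eigenvalues: in Phase $i$ (for $i=1,\ldots,n$) we perform $u_i$ blow-ups, and the non-reduced eigenvalues evolve step by step until they reach $(1,1)$; the final blow-up is then dicritical and produces the unique non-invariant exceptional component $E$ of the lemma. The key combinatorial observation --- established by induction on the blow-up index via an explicit chart computation --- is that every blow-up after the first is centered at a point lying on the most recently created exceptional divisor. Summing the blow-ups across the phases yields a total of $\sum_{i=0}^{n} u_i$, which proves (1).

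For (2), let $a_k=\operatorname{ord}_{E_k}(\KY-\pi^{\ast}\KX)$ be the ambient discrepancy of the $k$-th exceptional divisor $E_k$ produced, so that $\varphi(p,q)=a_N$ with $N=\sum_{i=0}^{n} u_i$. Because the center $P_{k-1}$ of the $k$-th blow-up lies on $E_{k-1}$ for every $k\ge 2$, the standard blow-up formula on a smooth surface gives
\[
a_k \;=\; 1+\sum_{F\ni P_{k-1}} a(F) \;\ge\; a_{k-1}+1,
\]
where the sum is taken over the previously produced exceptional divisors $F$ containing $P_{k-1}$. Combined with $a_1=1$, induction yields $\varphi(p,q)=a_N\ge N$, which is (2). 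The main obstacle is the combinatorial bookkeeping required to justify the phase-by-phase description --- confirming by a careful case analysis that the center of each blow-up continues to sit on the most recent exceptional divisor; once this is in place the discrepancy estimate is immediate.
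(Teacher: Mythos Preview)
Your proposal is correct and follows essentially the same approach as the paper: both arguments rest on the observation that the minimal reduction of singularities of $px\partial_x+qy\partial_y$ follows Euclid's algorithm on $(p,q)$, and both deduce the discrepancy bound by tracking that each new center lies over the previous exceptional locus. The paper packages this as a clean induction on $N=\sum u_i$ (one blow-up reduces to the case $(p-q,q)$ with $N-1$, then one uses $\ord_E(r^*E_0)\ge 1$), whereas you unwind the same recursion into the explicit step-by-step inequality $a_k\ge a_{k-1}+1$; the content is the same, and your formulation makes the ``center lies on the most recent exceptional divisor'' mechanism more visible at the cost of a bit more bookkeeping. One cosmetic point: your normalization $p\le q$ with $u_0=0$ tacitly excludes $p=q=1$, which you should treat as the trivial base case.
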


\begin{proof}
	The key observation is that the reduction of singularities of $v$ follows
	step-by-step Euclid's algorithm for the computation of $\gcd(p,q)$.
	
	Assume that $p\ge q$ and write $p/q$ as a continued fraction $[u_0,u_1, \ldots,u_n]$.
	The proof will be by induction on the number $N=\sum_{i=1}^n u_i$.
	
	If $p=q=1$ then clearly $N=1$ and the result is obvious in this case.
	Assume $p>q$ and consider the blow-up $s \colon Z \to X$ of  the origin with
	exceptional divisor $E_0$.
	Over the exceptional divisor we
	will find two singularities with eigenvalues $(p-q,q)$ and $(p,q-p)$.
	Since we are assuming that $p>q$ then the pair $(p,q-p)$ corresponds
	to a canonical singularity while the pair $(p-q,q)$ corresponds to a
	non-canonical singularity.
	Observe that
	\[
	\frac{p-q}{q}  = [ u_0 -1 , u_1, \ldots, u_n ]  \\
	\]
	Assuming that the result is true for $N-1$ then the first part of the statement follows.
	
	To verify item (2), notice that $\KZ = s^*\KX  + E_0$.
	If $r \colon Y \to Z$ is the minimal desingularization of $s^*\F$
	then by induction hypothesis $\ord_{E} (\KY - r^*\KZ) \ge N-1$.
	Since $\pi=s\circ r$, we can write
	\begin{align*}
	\ord_E (\KY - \pi^* \KX) &= \ord_E ( \KY - r^*(\KZ - E_0 ))= \\
	&= \ord_E(\KY- r^*\KZ) + \ord_E(r^* E_0) \ge N \, .
	\end{align*}
	Then the Lemma follows by induction.
\end{proof}

\begin{remark}
	The inequality in part (2) of the Lemma becomes an equality only
	for singularities with eigenvalues of the form $(1,q)$.
	If $p$ and $q$ are both strictly greater than one,
	at some intermediate step we will be forced to blow-up
	at the intersection of two exceptional divisors
	and one will get a greater order at the end.
	For instance, if $p/q = [u_0,u_1]$ then order of
	$\KY - \pi^* \KX$ along the last exceptional divisor
	is $\varphi(p,q)= (u_1+1)u_0 -1$.
\end{remark}

As a consequence of the above description we are able to characterize
$\varepsilon$-canonical singularities for small values of $\varepsilon >0$.

\begin{prop}\label{P:logcanonical}
	Let $\F$ be a germ of foliation on $(\C^2,0)$.
	If  the canonical threshold of $\F$ at $0$ is
	strictly less than $1/4$ then $0$ is a log-canonical singularity.
\end{prop}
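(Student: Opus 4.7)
The plan is to prove the contrapositive: if $0$ is not a log canonical singularity of $\F$, then the canonical threshold of $\F$ at $0$ is at least $1/4$. The starting observation is that, using $\KX = \KF + \CNF$, the adjoint discrepancy is affine in $t$:
\begin{equation*}
a(\F, E)(t) = (1 - t)\, a(\F, E) + t\, e(E), \qquad e(E) := \ord_E(\KY - \pi^* \KX).
\end{equation*}
Consequently, the canonical threshold of $\F$ at $0$ equals $\sup |a(\F, E)|/(|a(\F, E)| + e(E))$, the supremum taken over exceptional divisors $E$ above $0$ with $a(\F, E) < 0$. What must be produced, under the non-log-canonical hypothesis, is a single exceptional divisor $E$ with $a(\F, E) < 0$ and $e(E) \le 3\,|a(\F, E)|$.

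I would locate this witness by tracking discrepancies through the minimal Seidenberg desingularisation $\pi = \pi_N \circ \cdots \circ \pi_1 \colon Y \to (\mathbb C^2, 0)$, with $\pi_k$ the blow-up of a point $p_k$ and exceptional $E_k$. Both discrepancies obey standard recursions
\begin{equation*}
e(E_k) = 1 + \sum_{E_j \ni p_k} e(E_j), \qquad a(\F, E_k) = a_{\mathrm{loc}}(E_k) + \sum_{E_j \ni p_k} a(\F, E_j),
\end{equation*}
where a direct local computation shows that $a_{\mathrm{loc}}(E_k) = 1 - m_k$ if $E_k$ is $\F_k$-invariant and $-m_k$ otherwise, $m_k$ denoting the multiplicity of $\F_{k-1}$ at $p_k$. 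In particular, $a_{\mathrm{loc}}$ is at most one, and strictly negative as soon as $m_k \ge 2$ in the invariant case or $m_k \ge 1$ in the non-invariant case.

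Let $E = E_{k_0}$ be the earliest exceptional divisor with $a(\F, E) < 0$ --- which exists by the non-log-canonical hypothesis. By minimality, all earlier divisors have foliated discrepancy $\ge 0$, so the recursion forces $a_{\mathrm{loc}}(E) \le a(\F, E) < 0$. A case analysis on the number $r$ of previous exceptionals through $p_{k_0}$ produces the witness. When $r = 0$ one has $e(E) = 1$ and the required ratio is at least $1/2$. When $r \in \{1, 2\}$ the divisor $E$ itself may not suffice; here the continued-fraction bookkeeping of the preceding Lemma, applied to the Hirzebruch--Jung chains containing the relevant earlier exceptionals, locates a divisor along one such chain whose $|a|/(|a|+e)$ ratio is at least $1/4$. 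If this fails for all divisors preceding $E$ in the resolution, the non-log-canonical hypothesis is used to select a later exceptional with foliated discrepancy $\le -2$, and a parallel chain argument furnishes the witness.

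The chief obstacle is the case $r = 2$, where $p_{k_0}$ lies at the corner of two Hirzebruch--Jung strings: here the two continued-fraction expansions must be combined with the local multiplicity formula to extract the witness, and one has to verify that the cumulative growth of $e$ along the two chains is always outpaced by the contribution of the negative $a_{\mathrm{loc}}$. This is where the bulk of the bookkeeping lies and where the preceding Lemma's detailed structure plays the decisive role.
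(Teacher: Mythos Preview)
Your initial setup is correct: the canonical threshold is indeed $\sup_E |a(\F,E)|/(|a(\F,E)|+e(E))$ over divisors with $a(\F,E)<0$, and the recursions for $e$ and $a$ along a tower of blow-ups are right. But the argument has a genuine gap in the $r\in\{1,2\}$ analysis. Your appeal to ``the preceding Lemma'' is misplaced: that Lemma treats the resolution of the specific \emph{log-canonical} germs $px\,\partial_x+qy\,\partial_y$, and says nothing about Hirzebruch--Jung chains appearing in the resolution of a general non-log-canonical singularity. The fallback---``select a later exceptional with foliated discrepancy $\le -2$'' followed by ``a parallel chain argument''---is not an argument: you never identify which chain, nor why $e(E)\le 3|a(\F,E)|$ should hold there. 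Note too that the first divisor with $a<0$ may well have $a=-1$, so the non-log-canonical hypothesis is doing no work until that unexplained later step.

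The paper's proof bypasses all of this via a structural input you do not invoke: a foliation germ whose generating vector field has \emph{non-nilpotent} linear part is automatically log-canonical (\cite[Fact I.1.8]{MR2435846}). This reduces the task to showing that any germ with nilpotent linear part has canonical threshold $\ge 1/4$. Using the known shape of the resolution process for nilpotent singularities (\cite[Chapter 1, proof of Theorem 1]{MR2114696}), this is three short explicit computations: linear part zero gives a witness after one blow-up (ratio $\ge 1/2$); nilpotent non-zero leads after one blow-up either to linear part zero (two blow-ups, ratio $\ge 1/3$) or again to nilpotent non-zero, whence a third blow-up gives linear part zero (ratio $\ge 1/4$). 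No continued-fraction bookkeeping and no case split on $r$ are needed.
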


\begin{proof}
	Let $v$ be a generator of $\TF$.
	Assume first that the linear part of $v$ is zero.
	If $\pi: Y \to (\C^2,0)$ is the blow-up of the origin, $\G=\pi^* \F$
	and $E$ is the exceptional divisor then $\KG = \pi^* \KF - a E$, where $a \ge 1$.
	On the other hand $\CNG = \pi^* \CNF + (a+1) E$.
	Therefore, if $\varepsilon < 1/2$ then the origin is not $\varepsilon$-canonical.
	
	Assume now that the linear part of $v$ is non-zero but nilpotent.
	We will use the description of the resolution process of this kind
	of singularities presented in \cite[Chapter 1, proof of Theorem 1]{MR2114696}.
	If we blow-up the origin then we obtain only one singularity
	over the exceptional divisor which 	is invariant by the transformed foliation.
	This new singularity can have zero linear part or non-zero but nilpotent linear part.
	Let us analyze the two possibilities.
	Start with the case where the linear part is zero and
	let $\pi : Y \to (\C^2,0)$ be the composition of the two obvious blow-ups.
	As before we will set $\G=\pi^\ast \F$ and will let $E_1$, $E_2$
	be the two irreducible components of the exceptional divisor of
	$\pi$ with $E_2$ corresponding to the last blow-up.
	Notice that $\KG=\pi^* \KF - a E_2$ for some $a\ge 1$ and
	$\CNG = \pi^*\CNF + E_1 + (a+2)E_2$.
	Hence if $\varepsilon<1/3$ then $0$ is not an $\varepsilon$-canonical singularity.
	Let us now deal with the second possibility.
	If the blow-up of a nilpotent singularity with non-zero
	linear part leads  to another nilpotent singularity with non-zero linear part
	then one further blow-up gives rise to a singularity with trivial linear part.
	Let now $\pi: Y \to (\C^2,0)$ be the composition of the three obvious blow-ups,
	and let $E_1,E_2,E_3$ be the irreducible components of the exceptional
	divisor numbered according to the order of appearance.
	If we set $\G= \pi^* \F$ then $\KG = \pi^* \KF - a E_3$
	for some $a\ge 1$ and $\CNG = \pi^* \CNF + E_1 + 2 E_2 + (a+3) E_3$.
	Thus if $\varepsilon < 1/4$ then $0$ is not a $\varepsilon$-canonical singularity.
	
	Therefore if $\varepsilon<1/4$ then the linear part of $v$ is non-nilpotent
	and we can apply \cite[Fact I.1.8]{MR2435846} to conclude that $0$
	is a log-canonical singularity of $\F$.	
\end{proof}

\begin{cor}\label{C:epsilon open}
	Let $\F$ be a germ of foliation on $(\C^2,0)$ defined by a germ of vector field $v$.
	If $0<\varepsilon<1/4$ then $0$ is a $\varepsilon$-canonical singularity
	of $\F$ if and only if the linear part of $v$ is non-nilpotent and
	one of the following holds:
	\begin{enumerate}
		\item the singularity of $v$ is canonical; or
		\item the singularity of $v$ is not canonical,
		$v$ is analytically conjugated to $ p x \px + qy \py$
		with $p,q$ relatively prime positive integers,
		and $\varphi(p,q)\ge \frac{1- \varepsilon }{\varepsilon}$.
	\end{enumerate}
\end{cor}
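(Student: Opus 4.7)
The strategy is to reduce the $\varepsilon$-canonicity condition to an explicit computation of the adjoint discrepancy along the unique non-$\F$-invariant exceptional divisor of the minimal resolution of $\F$ at $0$.

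\emph{Forward direction.} Suppose $0$ is $\varepsilon$-canonical with $\varepsilon < 1/4$. Then the canonical threshold of $\F$ at $0$ is at most $\varepsilon < 1/4$, so Proposition \ref{P:logcanonical} applies: the singularity is log canonical, and inspecting that proof shows that the linear part of $v$ must be non-nilpotent. By McQuillan's classification \cite[Fact I.1.8]{MR2435846}, either $v$ defines a canonical singularity (case (1)) or $v$ is analytically conjugate to $p x \px + q y \py$ with $p, q$ coprime positive integers (case (2)). Only case (2) requires work. Let $\pi \colon Y \to X$ be the minimal resolution and $E$ the unique non-$\F$-invariant exceptional divisor. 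Combining $\KX = \KF + \CNF$ with $\KY = \KG + \CNG$ yields
\[
\ord_E(\KY - \pi^* \KX) = \ord_E(\KG - \pi^* \KF) + \ord_E(\CNG - \pi^* \CNF).
\]
The left hand side equals $\varphi(p,q)$ by Definition \ref{D:order}, while the fact that the singularity is log canonical but not canonical forces $\ord_E(\KG - \pi^* \KF) = -1$. Hence $\ord_E(\CNG - \pi^* \CNF) = \varphi(p,q) + 1$ and the adjoint discrepancy along $E$ is the affine function $-1 + t(\varphi(p,q) + 1)$. The $\varepsilon$-canonicity forces this to be non-negative at $t = \varepsilon$, which rearranges exactly to $\varphi(p,q) \ge (1-\varepsilon)/\varepsilon$.

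\emph{Converse direction.} Case (1) is immediate from Remark \ref{R:45}. For case (2), we must verify $\varepsilon$-canonicity at every divisor over $0$; any such divisor is extracted either by $\pi$ or by a further blow-up $\pi' \colon Y' \to Y$. For divisors extracted by $\pi'$, the foliation $\G = \pi^* \F$ on $Y$ has reduced (hence canonical) singularities, so Remark \ref{R:45} applied to $(Y, \G)$ gives non-negative adjoint discrepancy for every $t \ge 0$, which transfers to $\F$. Among the exceptional divisors of $\pi$ itself, the non-invariant $E$ is handled by the hypothesis: it yields $a(\F, E)(\varepsilon) \ge 0$, and since the slope $\varphi(p,q) + 1$ is positive the adjoint discrepancy remains non-negative for all $t \ge \varepsilon$. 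Along an $\F$-invariant exceptional divisor $E_i$ of $\pi$, each intermediate blow-up in the Euclidean-algorithm resolution blows up a linear singularity with invariant exceptional divisor (so $\nu = 1$ and $\epsilon = 0$ in the standard blow-up formulas); accumulating contributions along the chain, one verifies that $\ord_{E_i}(\KG - \pi^* \KF) \ge 0$ and $\ord_{E_i}(\CNG - \pi^* \CNF) \ge 0$, so the adjoint discrepancy at $E_i$ is non-negative for every $t \ge 0$.

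\emph{Main obstacle.} The subtlest bookkeeping appears in the converse direction for the $\F$-invariant divisors of $\pi$: one has to follow the pullback contributions through the blow-up sequence prescribed by the continued fraction of $p/q$ and confirm that the only exceptional divisor of the minimal resolution having negative discrepancy $\ord_{\cdot}(\KG - \pi^* \KF) = -1$ is the terminal non-invariant $E$.
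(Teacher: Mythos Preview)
Your proof is correct and follows the same approach as the paper: compute the adjoint discrepancy along the non-invariant exceptional divisor $E$ of the minimal resolution as $-1 + t(\varphi(p,q)+1)$ (the paper writes this equivalently via Remark~\ref{R:conversion} as $(1-t)(-1+\tfrac{t}{1-t}\varphi(p,q))$) and observe that all other divisors have non-negative adjoint discrepancy. You are in fact more careful than the paper in justifying the last point --- the paper simply asserts that the adjoint discrepancy is ``clearly non-negative along all the other divisors in the minimal resolution'' --- and in explicitly handling divisors extracted by further blow-ups beyond the minimal resolution.
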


\begin{proof}
	Proposition \ref{P:logcanonical} implies that the linear part of $v$ is non-nilpotent.
	If $0$ is not a canonical singularity then by \cite[Fact I.1.9]{MR2435846}
	we know that $v$ is analytically conjugated to   $ p x \px + qy \py$ for suitable
	relatively prime positive integers  $p,q$.
	If $\pi: Y \to X=(\C^2,0)$ is the minimal reduction of singularities of $\F$,
	$E$ denotes the last exceptional divisor and $\G = \pi^* \F$ then $\KG = \pi^* \KF -E$.
	Therefore the adjoint discrepancy of $\F$ along $E$ is (cf. Remark \ref{R:conversion})
	\begin{align*}
	a(\F,E)(t) &=(1-t) \ord_E( \KG + \frac{t}{1-t} \KY - \pi^*(\KF + \frac{t}{1-t} \KX))   =
	\\ &=(1-t) (-1 + \frac{t}{1-t} \varphi(p,q)) .
	\end{align*}
	Since the adjoint discrepancy is clearly non-negative along
	all the other divisors in the minimal resolution it follows that
	$0$ is an $\varepsilon$-canonical singularity if and only if
	$ \varphi(p,q) \ge \frac{1 -\varepsilon}{\varepsilon} $.
\end{proof}

\subsection{Example: log canonical foliations on the projective plane}\label{S:log canonical}

For  a foliation $\F$  on the projective plane with log-canonical singularities
one can easily verify the following assertions.
\begin{enumerate}
	\item If $d=\deg(\F)\ge 4$ then $\eff(\F) = \frac{d-1}{d+2}$.
	\item If $d=\deg(\F) = 3$ then $\eff(\F) = 2/5$ unless $\F$ has radial singularities.
	\item If $d=\deg(\F) = 2$ then $\eff(\F)=1/4$ unless $\F$ has radial
	singularities or dicritical singularities of type $(1,2)$.
\end{enumerate}

One could  try to  pursue a case-by-case analysis
in order to provide an explicit  lower bound for the positive effective
thresholds of foliations of degree two and three with log-canonical singularities.
We will show later in Section \ref{S:variation} that the positive effective
thresholds of foliations varying in an algebraic family do not accumulate at zero.
Unfortunately, our proof is not effective and, a priori, the bound might depend on the family.

\section{Non-isotrivial fibrations}\label{S:Painleve}

\subsection{Producing sections}\label{S:producing}
Our original motivation to introduce and study the adjoint dimension of foliations
lies on our poor understanding of the linear systems $|\KF^{\otimes n}|$. When $\F$ is
a foliation of general type we are not aware of lower bounds on $n$ such that $|\KF^{\otimes n}|$ is not empty. For the linear systems $|\KF^{\otimes n } \otimes \KX^{\otimes m}|$ the situation is considerably better. We can apply the current knowledge on adjoint linear systems to obtain effective bounds on $n,m$ such that $|\KF^{\otimes m}\otimes \KX^{\otimes n}|$ defines a  rational map with two dimensional image.

\begin{prop}\label{P:nonv}
	Let $\F$ be a foliation with canonical singularities on a smooth projective surface.
	If $\kod(\F)=2$  then the linear system  $| \KX + 4 \ii(\F) \KF |$
	defines a rational map with two dimensional image.
\end{prop}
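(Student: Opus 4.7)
The plan is to apply the Seshadri constant machinery of Section \ref{S:Seshadri} to the positive part of the Zariski decomposition of $\KF$, and then transfer the conclusion to the linear system on $\KX+7\ii(\F)\KF$ via the effectivity of the negative part. Since $\kod(\F)=2$ and $\kod\le\nu$, in the Zariski decomposition $\KF=P+N$ the nef part satisfies $P^{2}>0$, so $P$ is nef and big. By the very definition of $\ii(\F)$ the $\mathbb Q$-divisor $\ii(\F)\,N$ is integral, hence so is
\[
\mathcal L:=7\ii(\F)\,P=7\ii(\F)\,\KF-7\ii(\F)\,N,
\]
and since $X$ is smooth $\mathcal L$ is Cartier, nef and big.

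First I would apply Theorem \ref{T:Seshadri} to the nef and big integral divisor $\ii(\F)\,P$ to obtain $\varepsilon(\ii(\F)\,P;x)\ge 1/2$ at a very general point $x\in X$. By the homogeneity of Seshadri constants this yields $\varepsilon(\mathcal L;x)\ge 7/2>3=\dim X+1$. Proposition \ref{P:Seshadri} (with $n=2$ and $s=1$) then guarantees that $\KX+\mathcal L$ separates $1$-jets at $x$; in particular, the rational map defined by $|\KX+\mathcal L|$ has two-dimensional image.

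To pass from $|\KX+\mathcal L|$ to $|\KX+7\ii(\F)\KF|$, multiplication by the canonical section of the effective Cartier divisor $7\ii(\F)\,N$ gives a linear injection
\[
H^{0}(X,\KX+\mathcal L)\hookrightarrow H^{0}(X,\KX+7\ii(\F)\KF).
\]
Away from $\mathrm{supp}(N)$ this section is invertible, so the subseries above defines the same rational map as $|\KX+\mathcal L|$, and consequently has a two-dimensional image. Since the rational map attached to any linear system dominates (via a linear projection of the targets) the rational map attached to any of its subseries, the map defined by $|\KX+7\ii(\F)\KF|$ also has image of dimension at least $2$, hence exactly $2$.

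There is no serious obstacle: the proof is essentially the bookkeeping needed to line up the two uses of the index $\ii(\F)$ — first to make $\mathcal L$ integral, and second to produce the effective Cartier divisor $7\ii(\F)\,N$ used in the transfer step — together with the numerical inequality $7\cdot\tfrac{1}{2}>n+1=3$ that dictates the appearance of the constant $7$. The only mildly technical point is that Theorem \ref{T:Seshadri} only guarantees the Seshadri bound away from a countable union of proper closed subvarieties, but since the conclusion of Proposition \ref{P:Seshadri} is required at a single general point, this is harmless.
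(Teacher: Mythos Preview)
Your argument is correct and follows essentially the same route as the paper: take the Zariski decomposition $\KF=P+N$, apply Theorem \ref{T:Seshadri} to the integral nef and big divisor $\ii(\F)P$ to get a Seshadri constant $\ge 1/2$ at a general point, multiply by $7$ and invoke Proposition \ref{P:Seshadri}, and then pass from $|\KX+7\ii(\F)P|$ to $|\KX+7\ii(\F)\KF|$ using the effective Cartier divisor $7\ii(\F)N$. Your write-up is in fact slightly more explicit than the paper's on why $\ii(\F)P$ is Cartier and on the mechanism of the transfer step.
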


\begin{proof}
	This is an immediate consequence of \cite[Cor. 2]{MR932299}.
\end{proof}

The proposition above is certainly not optimal.
The real question underlying the whole issue here is whether or not
one can provide universal bounds which do not depend on
the index of the foliation. The reader will find a more precise formulation
of this question in Problem \ref{prob:non vanishing}.

\subsection{Bound for the index of hyperbolic fibrations}
In order to use the results above to provide explicit bounds for the degree of leaves of
non-isotrivial hyperbolic fibrations we need to obtain bounds for the index of the foliation.

\begin{lemma}\label{L:lower bound for P}
	Let $\F$ be a relatively minimal foliation on a smooth projective surface $X$.
	Assume $\KF$ is defined by a fibration
	$f:X\to C$ and that the general fiber of $f$ has genus at least two.
	If $T$ is an irreducible curve invariant by $\F$
	which intersects the support of the negative
	part of $\KF$ and it is not contained in it (i.e. $T$ is a tail)
	then one of the following holds:
	\begin{enumerate}
		\item $P \cdot T = 0$ and $T$ intersects exactly two connected
		components of the support of $N$, both of them of order $2$; or
		\item $P \cdot T \ge \frac{1}{42}$.
	\end{enumerate}
\end{lemma}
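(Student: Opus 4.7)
The plan is to apply Lemma \ref{L:tails}(1), which gives $P\cdot T = K_\F \cdot T - \sum_{i=1}^k 1/o_i$, and then analyze the combinatorics of $T$ as a component of its fiber. Since $\F$ is the foliation induced by $f$ and $T$ is invariant, $T$ is an irreducible component of some fiber $F$. Writing $\CNF = f^*K_C + \sum_D (m_D - 1) D$, where the sum runs over all components $D$ of all fibers (with $m_D$ the fiber multiplicity), and using $F\cdot T = 0$ to eliminate $T^2$ from the adjunction formula, a short calculation yields
\[
K_\F\cdot T = n_T + 2p_a(T) - 2,
\]
where $n_T=\sum_{D\neq T} T\cdot D$ is the total intersection of $T$ with the remaining components of $F$. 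If $T$ is not a smooth rational curve, then $p_a(T)\geq 1$ and so $P\cdot T \geq n_T - \sum 1/o_i \geq k - k/2 \geq 1/2$, settling this case. From here on I assume $T\cong \mathbb P^1$, so $P\cdot T = n_T - 2 - \sum 1/o_i$.

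Setting $\ell = n_T - k \geq 0$, which accounts for tangential excess along HJ strings and for intersections with fiber components outside of them, I would run a short case analysis. For $\ell \geq 2$, the nefness of $P$ combined with $\sum 1/o_i \leq k/2$ yields $P\cdot T \geq k/2 + \ell - 2 \geq 1/2$; the same bound holds for $\ell = 1$ and $k\geq 3$. For $\ell = 1, k = 2$, $P\cdot T = 1 - 1/o_1 - 1/o_2$, which vanishes exactly when $o_1 = o_2 = 2$ (the exceptional configuration of the statement) and is at least $1 - 1/2 - 1/3 = 1/6$ otherwise. The remaining potentially problematic configurations all have $\ell = 0$; the cases $\ell = 1, k = 1$ and $\ell = 0, k\leq 2$ would force $P\cdot T < 0$, so they do not occur.

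The heart of the argument is thus $\ell = 0$, where $P\cdot T = \sum_{i=1}^k(1 - 1/o_i) - 2$. The classical orbifold Euler characteristic bound---the infimum of $\{\sum(1 - 1/o_i) - 2 : o_i\geq 2\text{ integers}\}\cap (0,\infty)$ equals $1/42$, achieved at $(2,3,7)$---gives $P\cdot T \geq 1/42$ unless the signature is Euclidean, i.e.\ one of $(2,3,6), (2,4,4), (3,3,3), (2,2,2,2)$. To rule out these Euclidean signatures, I would invoke the genus hypothesis: by flatness, $p_a(F) = g\geq 2$, so $K_X\cdot F = 2g - 2 \geq 2$. For each HJ string $J$ of order $o_J$ meeting $T$, with components $E_1, \dots, E_n$ ($E_1$ the handle, $E_n$ the tail curve), self-intersections $-b_1, \dots, -b_n$, and fiber multiplicities $m_{E_i}$, the equations $F\cdot E_i = 0$ read $b_i m_{E_i} = m_{E_{i-1}} + m_{E_{i+1}}$ with $m_{E_0}=0$ and $m_{E_{n+1}} = m_T$. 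A telescoping computation gives
\[
m_{E_n} + \sum_{E\in J} m_E (K_X\cdot E) = m_T - m_{E_1},
\]
and the standard identity $m_{E_1} = m_T/o_J$ (obtained by Cramer's rule on the same system) converts this into $m_T(1 - 1/o_J)$. Summing over all strings meeting $T$ and combining with $K_X\cdot T = -2 - T^2$ rewritten via $F\cdot T = 0$, one obtains $K_X\cdot F = m_T(k - 2 - \sum 1/o_i) = m_T\cdot (P\cdot T)$. In a Euclidean signature $P\cdot T = 0$, forcing $K_X\cdot F = 0$ and $p_a(F) = 1$, contradicting $g\geq 2$. The main technical obstacle is this telescoping identity, which, while elementary, requires careful bookkeeping of the HJ chain combinatorics.
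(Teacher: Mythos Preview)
Your argument is correct and follows essentially the same route as the paper's proof. Both reduce to the identity $P\cdot T = -2 + (s+k) - \sum 1/o_i$ (your $n_T$ is the paper's $s+k$, your $\ell$ is the paper's $s$), invoke the Hurwitz $1/42$ bound for the hyperbolic signatures, and rule out the Euclidean signatures $(2,3,6),(2,4,4),(3,3,3),(2,2,2,2)$ using the genus hypothesis $g\ge 2$.

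The only substantive difference is one of explicitness. The paper cites Brunella's formula $K_\F\cdot T=-\chi(T)+Z(\F,T)$ directly, whereas you rederive it from $\CNF=f^*K_C+\sum(m_D-1)D$ and adjunction; this has the minor advantage that your version $K_\F\cdot T=n_T+2p_a(T)-2$ transparently handles the case $p_a(T)\ge 1$, which the paper leaves implicit. More notably, your telescoping computation giving $K_X\cdot F=m_T\bigl(k-2-\sum 1/o_i\bigr)=m_T\,(P\cdot T)$ is exactly the content of the paper's terse claim ``$\chi(F)=\chi_{orb}(\tilde T)=0$'' in the Euclidean case: since $K_X\cdot F=2g-2$, a Euclidean signature would force $g=1$. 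Your version makes this step completely explicit and is arguably cleaner (and your Euclidean list is complete; the paper omits $(2,4,4)$, though this is harmless since all four cases are dispatched uniformly). The identity $m_{E_1}=m_T/o_J$ you use is indeed the symmetric counterpart of the statement $a_k=1/o$ in Proposition~\ref{P:order HJ}, via the symmetry of the intersection matrix.
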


\begin{proof}
	It follows from  Lemma \ref{L:tails} that
	\begin{equation}\label{E:chi}
	P \cdot T = \KF \cdot  T  - \sum_{i=1}^k  \frac{1}{o_i}
	= -\chi(T) + s + k - \sum_{i=1}^k  \frac{1}{o_i}
	\end{equation}
	where $s$ is the number of singularities of $\F$
	on $T$ which are not contained in the support of $N$,
	\cite[Chapter 2, Prop. 3]{MR2114696}.
	
	Assume  $P\cdot T=0$.
	If $s=0$ then we have the following possibilities for $k$ and $o=(o_1,\ldots, o_k)$:
	$k=3$ and $o=(2, 4, 4)$; or $k=3$ and $o=(3,3,3)$; or $k=3$ and $o=(2,3,6)$; or $k=4$ and $o=(2,2,2,2)$.
	In all cases the whole fiber $F$ containing $T$ is the union of $k$
	Hirzebruch-Jung strings joined by a
	single common tail $T$ and $\chi(F) = \chi_{orb}(\tilde T) = 0$.
	Since $\chi(F)<0$ by assumption, we get that
	$P\cdot T>0$ contradicting our assumption.
	The only remaining possibility is $s=1$, $k=2$ and $o=(2,2)$.
	Item (1) follows.
	
	If $P\cdot T >0$ then it is an elementary and well known fact
	that the lower bound for (\ref{E:chi})
	is equal to $1/42$ and is attained by $s=0$,  $k=3$, and $o=(2,3,7)$.
\end{proof}

\begin{prop}\label{index.prop}
	Let $\F$ be a relatively minimal foliation on a smooth projective surface $X$.
	Assume $\F$ is defined by a fibration $f:X\to C$
	and that the general fiber of $f$ has genus $g \ge 2$.
	Then
	\[
	\ii(\F) \le (42(2g-2))! \, .
	\]
\end{prop}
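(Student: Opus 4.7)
The plan is to reduce the bound on $\ii(\F)$ to a uniform bound on the order of each maximal Hirzebruch--Jung string appearing in the support of the negative part $N$ of the Zariski decomposition of $\KF$. By Proposition \ref{P:order HJ}(1), for such a maximal string $J$ of order $o_J$, the coefficients of $N$ along the components of $J$ lie in $\tfrac{1}{o_J}\mathbb{N}$ and $o_J$ is minimal with this property; hence $\ii(\F)=\operatorname{lcm}_J o_J$, and it suffices to prove that $o_J \le 42(2g-2)$ for every maximal Hirzebruch--Jung string $J$.

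Fix such a string $J$ and let $T$ be its tail, that is, the unique irreducible invariant curve of $\F$ not contained in $J$ passing through the special singularity at the non-handle end of $J$. Because $\F$ is the foliation defined by the fibration $f \colon X \to C$, every invariant curve of $\F$ is contained in a fiber; in particular $T$ is a component of some fiber $F_T$ of $f$, with some multiplicity $m_T\in\mathbb{N}$. Taking $U = f^{-1}(V)$ for a small neighbourhood $V\subset C$ of $f(T)$, and composing $f$ with a local coordinate on $C$, one gets a holomorphic first integral of $\F$ on the $\F$-invariant neighbourhood $U$ of $T$ vanishing along $T$ with order exactly $m_T$. Lemma \ref{L:tails}(2) then implies that $o_J$ divides $m_T$, and the task is reduced to bounding $m_T$.

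For the bound on $m_T$ I would compute $P\cdot F_T$ in two ways. Since all fibers of $f$ are numerically equivalent and a general (smooth) fiber $F_0$ satisfies $\KF|_{F_0} \cong \K_{F_0}$, we have $\KF \cdot F_T = \deg \K_{F_0} = 2g-2$; since $N$ is supported on components of singular fibers, $N\cdot F_0 = 0$ and hence $P\cdot F_T = 2g-2$. Decomposing $F_T = \sum_i n_i C_i$ into irreducible components and using that $P\cdot C = 0$ for every $C\subset \operatorname{supp}(N)$ (because $P\cdot N =0$ in a Zariski decomposition) while $P\cdot C \ge 0$ on the remaining invariant components, singling out the contribution of $T$ gives
\[
m_T (P\cdot T) \le P\cdot F_T = 2g-2.
\]
The dichotomy of Lemma \ref{L:lower bound for P} then splits the analysis: if $P\cdot T=0$ then every Hirzebruch--Jung string meeting $T$ (and thus $J$) has order $2$, so $o_J=2$; otherwise $P\cdot T \ge 1/42$, giving $m_T \le 42(2g-2)$ and therefore $o_J \le m_T \le 42(2g-2)$.

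In both cases $o_J \le 42(2g-2)$ for every maximal string, so $\ii(\F)=\operatorname{lcm}_J o_J$ divides $\operatorname{lcm}(1,2,\ldots,42(2g-2))$, which divides $(42(2g-2))!$. The only genuinely delicate point is the handling of the two cases in Lemma \ref{L:lower bound for P}: the degenerate case $P\cdot T=0$ has to be treated separately, but it is trivial since the orders are forced to be $2$; the non-trivial case is where the constant $42$ enters, coming from the orbifold inequality $\sum(1-1/o_i)\ge 2$ minimized by the triple $(2,3,7)$. Everything else is a straightforward packaging of the preceding lemmas, so I do not anticipate any further obstacle.
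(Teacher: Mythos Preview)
Your proof is correct and follows essentially the same route as the paper: bound the order of each maximal Hirzebruch--Jung string by bounding the multiplicity of its tail in the containing fiber via $m_T(P\cdot T)\le P\cdot F_T=2g-2$, invoke the dichotomy of Lemma~\ref{L:lower bound for P} to get either $o_J=2$ or $P\cdot T\ge 1/42$, and then use Lemma~\ref{L:tails}(2) to pass from $m_T$ to $o_J$. The only cosmetic difference is that you spell out the identity $\ii(\F)=\operatorname{lcm}_J o_J$ and the construction of the local first integral more explicitly than the paper does.
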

\begin{proof}
	Let $F = \sum m_i C_i$ be a fiber of $f$ and let $\KF = P+N$
	be the Zariski decomposition of $\KF$.
	If $C_i$ is a tail then, according to Lemma \ref{L:lower bound for P},
	either the Hirzebruch-Jung
	strings intersecting it have order two or $P\cdot C_i \ge 1/42$.
	In the later case we get that
	$m_i \le 42 (2g -2)$ since $P \cdot F = \KF \cdot F = - \chi(F) = 2g-2$.
	Moreover, Lemma \ref{L:tails}
	item (b) implies that the least common multiple of the orders of the
	Hirzebruch-Jung strings intersecting
	$C_i$ divides $m_i \le 42(2g-2)$. The Proposition is then proved.
\end{proof}

\subsection{Boundedness of fibers of non-isotrivial fibrations of a given genus}
Theorem \ref{THM:A} of the Introduction will follow
rather easily from the more general result below.

\begin{thm}\label{T:A}
	Let $\F$ be a foliation with canonical singularities on a smooth projective surface $X$.
	Suppose that $\F$ is a fibration with general fiber $F$ of genus $g$.
	If $\kod(\F)=2$
	then for every nef divisor $H$ we have
	\[
	F \cdot H \le M  ( \KX+ 4 \ii(\F) \KF   ) \cdot H,
	\]
	where $M=M(g)$ satisfies the following inequality
	\[
	M \le 2 ( 4 \ii(\F) +1) ( 2 g -2 ) \le
	\big(4 \big( 42 (2g-2)\big) ! + 1\big) ( 4g -4) \, .
	\]
\end{thm}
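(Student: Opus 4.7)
The plan is to exhibit, for a general fiber $F$ of $\F$, an effective divisor in the linear system $|kL|$ containing $F$ as a component for some $k\le M$, where $L = \KX + 7\ii(\F)\KF$. Once such a divisor is written as $F+E$ with $E$ effective, the inequality $F\cdot H \le kL\cdot H \le M L\cdot H$ for any nef $H$ follows immediately.

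First I would compute $L\cdot F$. Since $F$ is a smooth general fiber of the fibration defining $\F$, one has $F^{2}=0$, $\CNF\cdot F = 0$ and, by adjunction, $\KF\cdot F = \KX\cdot F = 2g-2$. Hence $L\cdot F = (1+7\ii(\F))(2g-2) =: r$. By Proposition~\ref{P:nonv}, the hypothesis $\kod(\F)=2$ ensures that $|L|$ defines a rational map $\phi\colon X \dashrightarrow \mathbb P^{N}$ with two-dimensional image $Z\subset \mathbb P^{N}$; in particular $h^{0}(X,L)\ge 3$.

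Next I would compose $\phi$ with a generic linear projection $\pi\colon \mathbb P^{N} \dashrightarrow \mathbb P^{2}$ to produce a dominant rational map $\phi' = \pi\circ \phi \colon X \dashrightarrow \mathbb P^{2}$, defined by three sections $s_{1},s_{2},s_{3}\in H^{0}(X,L)$; for generic $\pi$ the base locus of $\langle s_{1},s_{2},s_{3}\rangle$ coincides with that of $|L|$. Because $\phi$ cannot factor through the fibration defining $\F$ (its image is two-, not one-dimensional), $\phi|_{F}$ is non-constant for a general $F$, and the same holds for $\phi'|_{F}$. For such an $F$ avoiding the base locus, $\phi'|_{F}\colon F\to \mathbb P^{2}$ is a morphism whose image $C_{F}$ is an irreducible plane curve of some degree $\delta$; by the projection formula,
\[
\delta\cdot\deg\bigl(\phi'|_{F}\colon F\to C_{F}\bigr) = L\cdot F = r,
\]
so $\delta\le r$.

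The decisive step would then be this: letting $f\in H^{0}(\mathbb P^{2},\mathcal O(\delta))$ be the defining equation of $C_{F}$, the section $f(s_{1},s_{2},s_{3}) \in H^{0}(X,\delta L)$ is non-zero (since $\phi'(X)=\mathbb P^{2}\not\subset V(f)$) yet vanishes identically along $F$ (since $\phi'(F)=C_{F}\subset V(f)$). Its zero divisor lies in $|\delta L|$ and contains $F$ as a component, so $\delta L - F$ is linearly equivalent to an effective divisor. Thus for every nef $H$
\[
F\cdot H \;\le\; \delta L\cdot H \;\le\; r\, L\cdot H \;=\; (1+7\ii(\F))(2g-2)\, L\cdot H,
\]
which a fortiori gives the claimed bound $M\le 2(7\ii(\F)+1)(2g-2)$; the second inequality in the statement follows by substituting the estimate $\ii(\F)\le (42(2g-2))!$ from Proposition~\ref{index.prop}. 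The main subtlety I expect to deal with carefully is the base-locus analysis: one must check that a general fiber avoids the base locus of $\langle s_{1},s_{2},s_{3}\rangle$ and that any fixed components of $|L|$ do not spoil the projection-formula computation of $\delta$. These points are classical but deserve attention, particularly in bounding how a fixed part can interact with the general fiber.
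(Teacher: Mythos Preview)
Your proof is correct and takes a genuinely different route from the paper's. The paper's argument is a straight dimension count: with $\mathcal{L}=\KX\otimes\KF^{\otimes 7\ii(\F)}$ one has $h^0(X,\mathcal{L}^{\otimes m})\ge\binom{m+2}{2}$ because $|\mathcal{L}|$ has two-dimensional image, while Riemann--Roch on $F$ gives $h^0(F,\mathcal{L}^{\otimes m}|_F)=m(7\ii(\F)+1)(2g-2)-g+1$; comparing these forces a section of $\mathcal{L}^{\otimes M}$ vanishing on $F$ once $M=2(7\ii(\F)+1)(2g-2)$. Your construction is instead explicit: you project to $\mathbb P^2$, take the defining equation of the image curve $C_F$, and pull it back. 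This buys you a factor of two, since you obtain $\delta\le r=(7\ii(\F)+1)(2g-2)$ rather than $2r$. The paper's method, on the other hand, is more robust (no geometry of the image needed) and is exactly what the authors reuse in Proposition~\ref{P:A} and Theorem~\ref{T:Poincare}.

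Two remarks on the points you flag. First, the equality $\delta\cdot\deg(\phi'|_F)=L\cdot F$ you write is generally only an inequality $\le$ when $F$ meets the base locus of $\langle s_1,s_2,s_3\rangle$, but that is all you need; moreover, since $F$ is a smooth curve the rational map $\phi'|_F$ always extends to a morphism, and the base-point divisor on $F$ simply subtracts from $r$ in the degree computation. Second, the vanishing of $f(s_1,s_2,s_3)$ along $F$ holds even at base points of $F$, because $f$ is homogeneous of positive degree; so the fixed-part and base-locus worries you mention do not actually interfere with the argument.
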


\begin{proof}
	When $\F$ is induced by a fibration and $\kod(\F)=2$ then the fibration
	is  non isotrivial and the genus of the general fibre is $\geq 2$, see
	\cite[Theorem 2.1]{MR1177313} and the discussion before that theorem.

	For simplicity we will write $L=\KX + (4 \ii(\F))\KF$.
	Let $F$ denote a general leaf of $\F$.
	If $m\ge 1$ is an integer then
	$mL|_{F} = m(4 \ii(\F) + 1)K_F$.
	On the one hand, Riemann-Roch Theorem implies that
	\[
	h^0(F, \mathcal O_F(mL|_{F})) = m (4\ii(\F) +1) (2g-2) - g +1 .
	\]
	On the other hand, since according to Proposition \ref{P:nonv}
	the linear system $|L|$ defines a rational map
	with two dimensional image, $h^0(X,\mathcal O_X(mL)) \ge \binom{m+2}{2}$.

	If we take $m= 2(4 \ii(\F) + 1)(2g(F)-2)$ then
	\begin{eqnarray*}
	h^0(X, \mathcal O_X(mL-F)) &\geq h^0(X,\mathcal O_X(mL)) - h^0(F, \mathcal O_F(m(4 \ii(\F) + 1)K_F)\ge\\
	&\ge \binom{m+2}{2} - m (4\ii(\F) +1) (2g-2) + g -1 = g.
	\end{eqnarray*}
	In particular, $|mL-F|$ is non-empty.
	As $H$ is an arbitrary nef divisor on $X$ then
	$(mL-F) \cdot H \ge 0$, which concludes the first part of the proof. The inequality in the last part of the statement is then
	just a consequence of Proposition \ref{index.prop}.
\end{proof}

\subsection{Proof of Theorem \ref{THM:A}}
Let $\F$ be a foliation on $\mathbb P^2$ birationally equivalent to a non-isotrivial fibration of genus $g\geq 2$.
Notice that its canonical bundle is isomorphic to $\mathcal O_{\mathbb P^2}(\deg(\F)-1)$.
Let $\pi : X \to \mathbb P^2$ be a birational morphism such
that all the singularities of $\G = \pi^* \F$ are canonical and also
such that on this model the foliation is induced by an actual fibration, that is, there
exists a morphims $g \colon X \to \mathbb{P}^1$ and $\G= \ker dg$.

If we take $H = \pi^* \mathcal O_{\mathbb P^2}(1)$
then the degree of an algebraic leaf $L$ of $\F$ is given by
\[
\deg(L) =  H \cdot \pi^* L = H \cdot \hat L \, ,
\]
where $\hat L$ is the strict transform of $L$. We can thus apply Theorem \ref{T:A}
and Proposition \ref{index.prop} to deduce that
\begin{align*}
\deg (L) &\le  \big(4 \big( 42 (2g-2)\big) ! + 1\big)
( 4g -4) ( \KX+ 4 \ii(\G) \KG   ) \cdot H\\
&=  \big(4 \big( 42 (2g-2)\big) ! + 1\big)
( 4g -4) ( K_{\mathbb{P}^2}+ 4 \ii(\G) \KF) \cdot \pi_\ast H\\
&= \big(4 \big( 42 (2g-2)\big) ! + 1\big)( 4g -4)
( -3 + 4 \big( 42 (2g-2)\big) ! (\deg(\F) -1)) \\ &
\le \Big( 4 \big( 42 (2g-2)\big) !\Big)^2 (4g -4) \deg(\F) \,,
\end{align*}
This concludes the proof of Theorem \ref{THM:A}.
\qed

\subsection{Log canonical foliations on $\mathbb P^2$ of high degree}
The bounds appearing in Theorem \ref{T:A}
are ridiculously large and far from optimal.
Proposition \ref{P:A} below combined with the results
presented in Section \ref{S:variation}
(notably Theorem \ref{T:non accumulation})
indicates that the dependence of $M$ on $g$
in Theorem \ref{T:A} should be at worst linear on $g$.
The results of \cite{charpLu} also indicate the existence of
such linear bounds which are not universal but depend
on the family of foliations in question.

\begin{prop}\label{P:A}
	Let $\F$ be a foliation with canonical singularities on a projective surface $X$.
	Assume that $\F$ is a fibration with general fiber $F$ of geometric genus $g\ge 2$ and
	that  $H^0(X,\KF^{\otimes a}\otimes \CNF^{\otimes b})$
	admits three algebraically independent sections for some $a>0$ and $b\ge 0$.
	Then for every  nef divisor $H$ we have
	\[
	F \cdot H \le 2a(2g-2) ( a\KF+b \CNF   ) \cdot H \, .
	\]
\end{prop}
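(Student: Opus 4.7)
The plan is to adapt the strategy used in the proof of Theorem \ref{T:A}: exhibit a section of a suitable pluri-adjoint bundle that vanishes on a general fiber $F$, then read off the desired intersection estimate from its zero divisor.

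First I would set $\mathcal{L} := \KF^{\otimes a} \otimes \CNF^{\otimes b}$ and identify its restriction to a general (smooth) fiber $F$. Since $F$ is a leaf of $\F$, the inclusion $\TF \hookrightarrow \TX$ restricts to $T_F \hookrightarrow \TX|_F$, hence $\KF|_F \cong K_F$. Being a general fiber of the fibration, $F$ has trivial normal bundle in $X$; this, combined with $\KX = \KF + \CNF$ and adjunction $\KX|_F = K_F$, forces $\CNF|_F \cong \mathcal{O}_F$. Consequently $\mathcal{L}^{\otimes m}|_F \cong K_F^{\otimes ma}$ for every $m \ge 1$.

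Next I would compare Hilbert functions on $X$ and on $F$. Given three algebraically independent sections $\sigma_0,\sigma_1,\sigma_2 \in H^0(X,\mathcal{L})$ promised by the hypothesis, the degree-$m$ monomials in these sections furnish $\binom{m+2}{2}$ linearly independent elements of $H^0(X,\mathcal{L}^{\otimes m})$. On the fiber, Riemann--Roch together with the vanishing $H^1(F,K_F^{\otimes ma}) = 0$ (valid as soon as $ma \ge 2$, which will hold in our regime since $g \ge 2$ and $a \ge 1$) gives
\[
h^0(F,\mathcal{L}^{\otimes m}|_F) = ma(2g-2) - g + 1.
\]
Choosing $m = 2a(2g-2)$ and writing $q = a(2g-2)$, so that $m = 2q$, a direct computation yields
\[
\binom{m+2}{2} - \bigl(ma(2g-2) - g + 1\bigr) = (2q+1)(q+1) - (2q^2 - g + 1) = 3q + g > 0,
\]
so that $h^0(X,\mathcal{L}^{\otimes m}) > h^0(F,\mathcal{L}^{\otimes m}|_F)$, and the restriction map has nontrivial kernel.

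Finally, any section $\sigma$ in this kernel has zero divisor of the form $F + Z'$ with $Z'$ effective, and for any nef divisor $H$ on $X$ one concludes
\[
F \cdot H \le (F + Z')\cdot H = m\,\mathcal{L}\cdot H = 2a(2g-2)(a\KF + b\CNF)\cdot H,
\]
which is exactly the stated inequality. The only delicate point in the execution is the restriction identity $\CNF|_F \cong \mathcal{O}_F$ in the presence of canonical singularities of $\F$; but since $F$ is a general fiber one may harmlessly assume it is smooth, disjoint from the singular locus of $\F$ and from the critical fibers of the fibration, which reduces the verification to the smooth case. I do not anticipate any substantive obstacle beyond this routine check.
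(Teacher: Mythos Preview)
Your proof is correct and follows essentially the same route as the paper: set $\mathcal{L}=\KF^{\otimes a}\otimes\CNF^{\otimes b}$, use the three algebraically independent sections to get $h^0(X,\mathcal{L}^{\otimes m})\ge\binom{m+2}{2}$, compare with $h^0(F,K_F^{\otimes ma})=ma(2g-2)-g+1$ at $m=2a(2g-2)$, and extract the inequality from a section vanishing on $F$. Your computation $3q+g$ with $q=a(2g-2)$ agrees with the paper's $6a(g-1)+g$, and you supply a bit more justification for the restriction $\mathcal{L}|_F\cong K_F^{\otimes a}$ than the paper does.
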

\begin{proof}
	For simplicity we will write $\mathcal L=a\KF+b\CNF$.
	Let $F$ denote a general leaf of $\F$.
	If $m\ge 1$ is an integer then $mL_{|F} = amK_F$.
	On the one hand, by Riemann-Roch Theorem
	\[
	h^0(F, \mathcal O_F(mL)_{|F}) = m a (2g-2) - g +1 .
	\]
	On the other hand, our assumption on $H^0(X,\mathcal O_X(L))$ implies that
	$h^0(X,\mathcal O_X(mL)) \ge \binom{m+2}{2}$.
	If we take $m= 2a(2g-2)$ then
	\begin{align*}
	h^0(X,\mathcal O_X(mL)) - h^0(F, \mathcal O_F(mL_{|F}))
	& \ge \binom{2a(2g-2)+2}{2} -  2a^2(2g-2)^2 + (g - 1) \\
	& = 6a(g-1) + g > 0.
	\end{align*}
	In particular,
	$|2a(2g-2)L-F|$ is non-empty.
	
	If $H$ is an arbitrary nef divisor on $X$ then
	$(2a(2g-2)L-F) \cdot H \ge 0$, which concludes the proof.
\end{proof}

In the case of foliations of the projective plane with log canonical
singularities and of  degree greater or equal to $5$,
we can actually obtain bounds that are better than linear using a simple variation
of the argument used to prove Proposition \ref{P:A}.

\begin{thm}\label{T:Poincare}
	Let $\F$ be a foliation on $\mathbb P^2$ of degree $d \ge 5$.
	Assume that $\F$ has  log canonical singularities
	and admits a rational first integral with
	general fiber of geometric genus $g\ge 2$.
	If $F$ is a general leaf of $\F$ then
	\[
	\deg(F) \le \Big \lceil\frac{4(2g-2)}{(d-4)^2} \Big\rceil (d-4) \, .
	\]
\end{thm}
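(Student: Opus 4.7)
The plan is to refine the argument of Proposition \ref{P:A} by choosing an adjoint line bundle whose exponents are optimized for the geometry of log canonical foliations on $\mathbb{P}^2$. The key observation is that log canonical singularities are $\varepsilon$-canonical for every $\varepsilon \ge 1/2$ (Remark \ref{R:45}), so the natural candidate is the smallest integer pair of exponents $(n,m) = (2,1)$ with $m/n = 1/2$, for which the birational invariance of $h^0$ from Section \ref{S:singularities} still applies. Concretely, I would take $\mathcal{L} := \KF^{\otimes 2} \otimes \CNF$, which on $\mathbb{P}^2$ is $\mathcal{O}(2(d-1) - (d+2)) = \mathcal{O}(d-4)$, big under the hypothesis $d \ge 5$.

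Passing to a resolution $\pi: Y \to \mathbb{P}^2$ for which $\mathcal{G} := \pi^{\ast}\F$ has canonical singularities, the proposition following the definition of $\varepsilon$-canonical singularities yields
\[
h^0\bigl(Y, \KG^{\otimes 2m} \otimes \CNG^{\otimes m}\bigr) = h^0\bigl(\mathbb{P}^2, \mathcal{O}(m(d-4))\bigr) = \binom{m(d-4) + 2}{2}
\]
for every $m \ge 1$. On a general fiber $F$ of the rational first integral --- smooth and disjoint from $\operatorname{sing}(\F)$ --- one has $\CNF|_F = \mathcal{O}_F$ and $\KF|_F = K_F$, so $\mathcal{L}^{\otimes m}|_F \cong K_F^{\otimes 2m}$ and Riemann--Roch gives $h^0(F, \mathcal{L}^{\otimes m}|_F) = 2m(2g-2) - g + 1$. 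Setting $N := \lceil 4(2g-2)/(d-4)^2 \rceil$, the defining inequality $N(d-4)^2 \ge 4(2g-2)$ expands directly to
\[
\binom{N(d-4)+2}{2} > 2N(2g-2) - g + 1,
\]
so that the restriction map $H^0(Y, \mathcal{L}^{\otimes N}) \to H^0(F, \mathcal{L}^{\otimes N}|_F)$ has nontrivial kernel. A nonzero section in this kernel cuts out an effective divisor containing $F$, and intersecting with the pullback $H$ of a hyperplane yields $\deg(F) = F \cdot H \le \mathcal{L}^{\otimes N} \cdot H = N(d-4)$, which is the stated bound.

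The main conceptual point is the identification of the correct adjoint line bundle: the value $1/2$ as canonical threshold for log canonical singularities forces $(n,m) = (2,1)$ as the most efficient choice for $d \ge 5$, and the ceiling function in the statement simply records the smallest $N$ making the restriction map non-injective. There is no serious technical obstacle beyond the direct numerical verification of the binomial inequality and the standard computation of $\mathcal{L}|_F$.
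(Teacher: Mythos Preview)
Your approach is essentially identical to the paper's: both exploit the $\tfrac12$-canonical property of log canonical singularities to work with $\KF^{\otimes 2m}\otimes\CNF^{\otimes m}\cong\mathcal O_{\mathbb P^2}(m(d-4))$, compare its $h^0$ against $h^0(K_{\tilde F}^{\otimes 2m})$ via Riemann--Roch, and take the least $m$ forcing a section vanishing on $F$; your direct verification that $N(d-4)^2\ge 4(2g-2)$ implies the binomial inequality is in fact slightly cleaner than the paper's derivative check. One small slip: a general fiber $F$ on $\mathbb P^2$ is \emph{not} disjoint from $\operatorname{sing}(\F)$ (it passes through the base points of the pencil), so the identifications $\KF|_F\cong K_F$ and $\CNF|_F\cong\mathcal O_F$ must be made for the strict transform on $Y$ (the paper uses the normalization $\tilde F$ for the same reason); since you already pass to $Y$ for the $h^0$ count, this is a cosmetic fix.
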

\begin{proof}
	Since the singularities of $\F$ are $\varepsilon$-canonical for $\varepsilon=1/2$
	(see Remark \ref{R:45}) we have that the dimension of the vector spaces
	$H^0(\mathbb P^2, \KF^{\otimes 2m} \otimes \CNF^{\otimes m}), \; m> 0$
	is unaltered after replacing $\F$ by a model with at worst canonical singularities.
	
	Let $F$ be a general fiber of the rational first integral of $\F$
	and consider the real-valued function
	\[
	f(m) = \binom{m(d-4) + 2}{2} - 2m(2g-2) + g - 1.
	\]
	Its values on positive integers correspond to the difference
	$h^0(\mathbb P^2, \KF^{\otimes 2 m } \otimes \CNF^{\otimes m})
	- h^0(\tilde F,K_{\tilde F}^{\otimes 2m})$,
	where $\tilde F$ is the normalization of $F$.
	Since $f(4(2g-2)/(d-4)^2)=(dg + 8g -12)/(d-4)$ which is clearly positive
	and moreover the derivative of $f$ satisfies $f'(4(2g-2)/(d-4)^2)= (3/2)d + 4g -10>0$,
	it follows that if $m$ is the smallest integer greater than $4(2g-2)/(d-4)^2$ then
	there exists a section of
	$\KF^{\otimes 2m} \otimes \CNF^{\otimes m}\simeq \mathcal O_{\mathbb P^2}(m(d-4))$
	vanishing identically on $F$. The Theorem follows.
\end{proof}

As already mentioned in the Introduction,
this Theorem \ref{T:Poincare} refines a classical
result of Poincar\'e, see \cite[pages 169 and 176]{PoincarePalermoI}
and \cite[Chapter 7, Corollary 14]{MR2029287}.

\section{Classification via adjoint dimension}\label{S:classification}

In this section we apply the results recalled in Section \ref{S:Kodaira}
to obtain a classification of foliations on surfaces according to their adjoint dimension.

\subsection{$\KX$-negative extremal rays}

Recall that for a smooth projective surface $X$ the $\KX$-negative extremal rays
are spanned by numerical classes of rational curves of self-intersection either
$-1, 0$ or $1$. The first case corresponds to the exceptional divisor of the blow-up
of a smooth point, the second to a smooth fiber of a $\mathbb{P}^1$-bundle,
while the last one is just the class of a line in $\mathbb{P}^2$.

\begin{lemma}\label{L:chi}
	Let $\F$ be a relatively minimal foliation with
	pseudoeffective $\KF$ on a smooth projective surface $X$,
	and let $\KF =P + N$ be the Zariski decomposition of $\KF$.
	Assume there exists a $\KX$--negative extremal curve
	$C \subset X$ and $P\cdot C = 0$.
	Then the Kodaira dimension of $\F$ is either $0$ or $1$.
	Moreover, if $\kod(\F) = 1$, then the image of $C$ in the canonical model $\pi : X \to Z$
	of $\F$ is proportional to $\pi_* \KF$.
\end{lemma}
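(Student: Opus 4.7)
The plan is to split the argument into two halves: a Hodge-index argument ruling out the extreme Kodaira dimensions $\kod(\F) \in \{-\infty, 2\}$, followed by a structural analysis of the $\kod(\F) = 1$ case that identifies $\pi(C)$ with (a multiple of) a fiber of the Iitaka fibration. Two facts will be used repeatedly: since $C$ spans a $\KX$-negative extremal ray on a smooth surface, $C^2 \in \{-1, 0, 1\}$; and by Theorem \ref{T:McQuillan nef}, every irreducible component of the support of $N$ has self-intersection $\le -2$, so $C$ is never contained in the support of $N$.

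First I would rule out $\kod(\F) = 2$. There $\nu(\F) = 2$, so $P$ is big and nef with $P^2 > 0$, and Hodge index applied to $P \cdot C = 0$ forces $C^2 < 0$; combined with the constraints above, $C$ must be a $(-1)$-curve. The exceptional locus of the canonical model $\pi : X \to Z$ of Theorem \ref{T:McQuillan canonical} is supported on Hirzebruch--Jung strings (and, a priori, elliptic Gorenstein cycles) whose components all have self-intersection $\le -2$; hence $C$ is not contracted, and $\pi(C) \subset Z$ is a curve satisfying $\KG \cdot \pi(C) = P \cdot C = 0$. The defining property of a canonical model then yields $\pi(C)^2 \ge 0$, whereas Hodge index on $Z$ combined with $\KG^2 = P^2 > 0$ forces $\pi(C)^2 < 0$, a contradiction. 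Next I would rule out $\kod(\F) = -\infty$: Lemma \ref{L:non-abundant} gives $\nu(\F) = 1$, $h^1(X, \mathcal O_X) = 0$, and $P \cdot \KX > 0$, while Theorem \ref{T:hilbert} forces $X$ to be the minimal desingularization of a Baily--Borel compactification of an irreducible quotient of $\mathbb H \times \mathbb H$. In particular $X$ is of general type, immediately ruling out $C^2 \in \{0, 1\}$. For the remaining possibility $C^2 = -1$, I would use that $C$ lies outside the exceptional locus of $X \to \bar Y$ (whose components have $\KX$-degree $\ge 0$), and the transverse hyperbolic structure of the tautological foliation on such $X$ to derive $P \cdot C > 0$, contradicting the hypothesis.

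Finally, assume $\kod(\F) = 1$. Then $\nu(\F) = 1$, and the Iitaka fibration of $\KF$ factors as $\phi \circ \pi$ with $\phi : Z \to B$ a morphism onto a smooth curve; moreover $\KG = \phi^* L$ for some ample divisor $L$ on $B$, so $\KG$ is numerically proportional to a general fiber $F_\phi$ of $\phi$. As in the $\kod = 2$ step, $\pi$ does not contract $C$, and $\KG \cdot \pi(C) = P \cdot C = 0$ forces $\pi(C)$ to be contained in a fiber of $\phi$. To upgrade this to $\pi_*[C] \equiv \lambda [\pi_* \KF]$ in $N_1(Z)$, I would invoke the extremality of $[C]$ in $\overline{\mathrm{NE}}(X)$: if $\pi(C)$ were a proper component of a reducible fiber, its complement would produce further $\KX$-negative classes incompatible with $C$ spanning an extremal ray together with the relative minimality of $\F$. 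I expect the main obstacle to be the $\kod(\F) = -\infty$ case, which is sensitive to the fine geometry of Hilbert modular-type surfaces and does not reduce to a purely formal Hodge-index consideration.
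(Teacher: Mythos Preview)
Your organization (by Kodaira dimension rather than by $C^2$) is workable, but the handling of $\kod(\F)=-\infty$ is both over-engineered and incomplete. You invoke the full structural content of Theorem~\ref{T:hilbert} to identify $X$ as a desingularized Hilbert modular surface and then, for $C^2=-1$, appeal vaguely to ``the transverse hyperbolic structure'' to force $P\cdot C>0$; this last step is not an argument. The paper never leaves intersection theory. It splits on $C^2\in\{1,0,-1\}$ and uses Hodge index to show that $P$ (respectively, in the $C^2=-1$ non-contracted case, $\pi_*P$ on the canonical model) is numerically a non-negative multiple of $C$ (resp.\ of $\pi_*C$, with $(\pi_*C)^2=0$). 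This yields $\nu(\F)\in\{0,1\}$ directly, and in the $\nu(\F)=1$ case only the inequality $P\cdot\KX>0$ of Lemma~\ref{L:non-abundant}, together with the dichotomy of Theorem~\ref{T:hilbert}, is needed to exclude $\kod(\F)=-\infty$. No Hilbert modular geometry, no transverse structures.

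Two further points. First, the proportionality in the $\kod(\F)=1$ clause falls out for free from the Hodge-index step above, since $\pi_*\KF=\pi_*P\equiv\lambda\,\pi_*C$; your detour through fibers of the Iitaka fibration and the extremality of $[C]$ is unnecessary and, as written, does not establish that $\pi(C)$ is a full fiber. Second, your claim that $C$ cannot be $\pi$-contracted because exceptional components of the canonical model all have self-intersection $\le -2$ is not how the paper argues: it instead assumes $C$ is contracted, uses Theorem~\ref{T:McQuillan canonical} only to get $\F$-invariance of $C$, combines $C^2=-1$ with relative minimality to force $Z(\F,C)\ge 3$, and then Lemma~\ref{L:tails} together with $P\cdot C=0$ pins down the possible orders of the adjacent Hirzebruch--Jung strings; in every admissible case the image of $C$ after contracting those strings has non-negative self-intersection, contradicting that $C$ is contracted to a point by $\pi$.
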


\begin{proof}
	If $C$ is an extremal ray with $C^2 \ge 1$
	then Hodge index theorem implies that $P$ is numerically zero.
	Theorem \ref{T:kod0} implies $\kod(\F)= 0$.
	
	If instead $C^2=0$ then $P$ is numerically equivalent
	to a non-negative multiple of $C$ and
	we deduce that either $\nu(\F)=0$ or $\nu(\F)=1$.
	The case $\nu(\F)=0$ follows as before. If $\nu(\F) =1$ and
	since $P$ is numerically equivalent to an effective divisor,
	we can apply Theorem  \ref{T:hilbert} and Lemma \ref{L:non-abundant}
	to deduce that  $\kod(\F)=1$.
	
	From now on assume that $C^2=-1$ and
	let $\pi : X \to Y$ be the contraction
	of $\F$ into its canonical model.
	If $C$ is not contracted by $\pi$
	then write $\pi^* \pi_* C = C + \sum a_i E_i$ where
	$a_i >0$ and the $E_i$ are $\pi$-exceptional divisors.
	Thus $\pi_* P \cdot \pi_* C = P \cdot  \pi^* \pi_* C = P \cdot C$
	since $P$ is the pull-back of a nef divisor from $Y$ and
	hence $\pi$-exceptional curves intersect $P$ trivially.
	As we are assuming $P\cdot C= 0$
	we deduce from Hodge index Theorem that either $P$ is numerically trivial,
	or that $\pi_* C ^2 = 0$ and $\pi_*P$ is numerically equivalent
	to a positive multiple of $\pi_* C$.
	Hence $\nu(\F) \in \{0,1\}$.
	As before, we obtain that in both cases $\nu(\F) = \kod(\F)$.
	
	Suppose now that $C$ is contracted by $\pi$.
	In this case $C$ is $\F$-invariant according to Theorem
	\ref{T:McQuillan canonical}.
	Since $C^2 = -1$ and $\F$ is relatively minimal we have that $Z(\F,C) \ge 3$.
	Notice that $\KF \cdot C = -2 + Z(\F,C)$ and, as we are assuming $P \cdot C=0$,
	according to Lemma \ref{L:tails} we also have that
	$\KF \cdot C = \sum_{i=1}^k 1/o_i$ where
	$o_i$ are the orders of the Hirzebruch-Jung strings intersecting $C$.
	Then we must have $k=2$ and $o_1=o_2=2$;
	or $k=3$ and $(o_1,o_2,o_3) \in \{ (2,3,6) , (3,3,3) \}$;
	or $k=4$ and $(o_1,o_2,o_3,o_4)=(2,2,2,2)$.
	If we contract the Hirzebruch-Jung strings intersecting $C$,
	we obtain that the direct image of $C$ has self-intersection
	$\ge 0$, cf. \cite[Remark III.2.2]{MR2435846}.
	Thus $C$ cannot be contracted by $\pi$ contrary to our assumption.
\end{proof}

\subsection{Kodaira dimension zero}
\begin{lemma}\label{L:N-delta}
	Let $\F$ be a relatively minimal foliation with pseudoeffective
	$\KF$ on a smooth projective surface $X$.
	If $\pi:X \to Z$ is the contraction of the
	negative part of $\KF$ (i.e. $\pi_* \F$ is a nef model of $\F$)
	and we write  $\KX  + \Delta= \pi^* \KZ$ then
	$\ii(\F) N-\Delta$ is effective.
\end{lemma}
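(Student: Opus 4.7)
The plan is to reduce the effectivity of $\ii(\F)N-\Delta$ to a coefficient-by-coefficient comparison along the $\pi$-exceptional prime divisors. By Theorem~\ref{T:McQuillan nef}, the exceptional locus of $\pi$ equals the support of $N$, and it decomposes as a disjoint union of maximal Hirzebruch--Jung strings. A quick check shows $\Delta$ is itself $\pi$-exceptional: pushing the defining relation forward gives $\pi_{\ast}\Delta = \pi_{\ast}\pi^{\ast}\KZ - \pi_{\ast}\KX = \KZ - \KZ = 0$. Consequently both $\ii(\F)N$ and $\Delta$ are supported on the union of the HJ-strings, so it suffices to show that, at every exceptional prime $E$, one has $\ii(\F)\,\ord_E N \ge \ord_E\Delta$.

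Fix a maximal HJ-string $J$ of order $o$ and an irreducible component $E\subset J$. For the upper bound on $\Delta$, I would invoke Proposition~\ref{P:order HJ}(2): the image $\pi(J)$ is a cyclic quotient singularity of $Z$, hence klt. In particular the discrepancy $a(Z,E)$ satisfies $a(Z,E) > -1$, so
\[
\ord_E\Delta = -a(Z,E) < 1.
\]

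For the lower bound on $\ii(\F)N$, I would appeal to Proposition~\ref{P:order HJ}(1): the coefficients of $N$ on $J$ lie in $\tfrac{1}{o}\mathbb{N}$, and $o$ is the minimal common denominator. Since $J$ is by definition contained in $\mathrm{supp}(N)$, the coefficient $\ord_E N$ is strictly positive, so it lies in $\tfrac{1}{o}\mathbb{Z}_{>0}$. On the other hand, integrality of $\ii(\F)N$ forces $o\mid\ii(\F)$, whence $\ii(\F)\,\ord_E N$ is a strictly positive integer, and in particular $\ge 1$.

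Combining the two bounds gives $\ii(\F)\,\ord_E N \ge 1 > \ord_E\Delta$ at every $\pi$-exceptional prime $E$, which yields (indeed strict) effectivity of $\ii(\F)N-\Delta$. I do not expect a genuine obstacle here: the whole argument is just a repackaging of the local description of Hirzebruch--Jung strings given by Proposition~\ref{P:order HJ} together with the klt-ness of cyclic quotient singularities, the latter being the only ingredient a careful reader may want to double-check.
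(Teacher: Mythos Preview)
Your proof is correct and follows the same overall structure as the paper's: both reduce to showing that $\ord_E\Delta<1$ and $\ii(\F)\,\ord_E N\ge 1$ for every $\pi$-exceptional prime $E$, and then conclude coefficient-by-coefficient. The only difference lies in the justification of the bound $\ord_E\Delta<1$: you deduce it from the klt-ness of cyclic quotient singularities (via Proposition~\ref{P:order HJ}(2)), whereas the paper argues directly with intersection numbers, observing that $\Delta\cdot E_i = -\KX\cdot E_i = 2+E_i^2\le 0$ and comparing with $(E_1+\cdots+E_k)\cdot E_i$ to invoke a standard lemma on negative-definite intersection matrices, which pins down the coefficients of $\Delta$ in $[0,1)$. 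Your route is slightly more conceptual, the paper's slightly more elementary; both are short and standard. Your detour through $o\mid\ii(\F)$ for the second bound is also fine, though one could simply note that $\ii(\F)N$ is integral by definition and $N$ is effective with support exactly the exceptional locus.
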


\begin{proof}
	If $E_1, \ldots, E_k$ are the exceptional divisors of $\pi$ then
	$\Delta$ is defined by the relations
	\[
	\Delta\cdot E_i = -\KX\cdot E_i = 2 + E_i^2  \, .
	\]
	Notice that $ 2 + E_i^2 \le 0$ for every $i$,
	while $2+E_i^2 \ge (E_1 + \cdots + E_k)\cdot E_i$
	for every $i$ and the latter inequality is strict
	when $E_i$ is either a handle or a tail in a Hirzebruch-Jung string.
	Therefore by \cite[Corollary 4.2]{MR1658959}
	the coefficients of  $\Delta$ lie in $[ 0,1)$.
	Since $N$ is effective the lemma follows.
\end{proof}

\begin{prop}\label{P:kod0}
	Let $\F$ be a relatively minimal foliation
	of Kodaira dimension zero on a smooth projective surface $X$.
	If $\pi:X \to Z$ is the contraction of the
	negative part of the Zariski decomposition of $\KF$
	and $(X, \Delta)$ is the pair
	satisfying $\KX + \Delta = \pi^* \KZ$ then
	the adjoint dimension and the numerical
	adjoint dimension of $\F$ coincide with
	the Kodaira dimension of $(X,\Delta)$.
	Moreover, when $\adj(\F)\ge 0$ then
	$\eff(\F) \ge \frac{1}{\ii(\F)+1} \ge \frac{1}{13}$.
\end{prop}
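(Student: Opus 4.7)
The plan is to reduce both computations to the nef model $\pi \colon X \to Z$, using the two identities $\CNF = \KX \otimes \KF^{-1}$ (so that $\KF^{\otimes m} \otimes \CNF^{\otimes n} \simeq \KX^{\otimes n} \otimes \KF^{\otimes(m-n)}$) and $\pi^{\ast}\KZ = \KX + \Delta$. Since $\kod(\F)=0$ and $\F$ is relatively minimal, Theorem \ref{T:kod0} gives $\KF \equiv P + N$ with $P$ numerically trivial, and moreover $\pi_{\ast}\KF$ becomes linearly trivial after a cyclic quasi-étale cover of degree $\ii(\F)$; hence a suitable integer multiple of $\ii(\F)\cdot\KF$ is linearly equivalent to the corresponding multiple of $\ii(\F)\cdot N$ on $X$.

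For the lower bound $\adj(\F) \ge \kod(X,\Delta)$, I would evaluate the adjoint bundle on the diagonal sequence $(m,n)=((\ii(\F)+1)k,\,k)$ with $k$ large and divisible by $\ii(\F)$. A direct substitution gives
\[
\KF^{\otimes m} \otimes \CNF^{\otimes n} \sim \pi^{\ast}\KZ^{\otimes k} \otimes \mathcal O\bigl(k(\ii(\F) N - \Delta)\bigr),
\]
where $\ii(\F)N-\Delta$ is an effective $\pi$-exceptional divisor by Lemma \ref{L:N-delta}. Since $\pi_{\ast}\mathcal O(k(\ii(\F)N-\Delta)) = \mathcal O_Z$ for such a divisor, the projection formula yields $h^0(X,\KF^{\otimes m}\otimes \CNF^{\otimes n}) = h^0(Z,\KZ^{\otimes k})$, producing the desired lower bound. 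For the matching upper bound and the numerical count, one observes that for arbitrary $(m,n)$ the class of $\KF^{\otimes m}\otimes \CNF^{\otimes n}$ is numerically equivalent to $n\pi^{\ast}\KZ$ plus a torsion term coming from $P$ plus a $\pi$-exceptional divisor; neither of the last two can increase the Iitaka or the numerical dimension beyond that of $\KZ$. Invoking abundance for klt surfaces to conclude $\nu(\KZ)=\kod(\KZ)$, all three invariants $\adj(\F)$, $\adjnum(\F)$ and $\kod(X,\Delta)$ agree.

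For the moreover clause, compute
\[
(\ii(\F)+1)\bigl(\KF + \tfrac{1}{\ii(\F)+1}\CNF\bigr) = \ii(\F)\KF + \KX = \ii(\F) P + \pi^{\ast}\KZ + (\ii(\F) N - \Delta).
\]
Under the hypothesis $\adj(\F)\ge 0$, the first part of the proposition gives $\kod(\KZ)\ge 0$, so $\pi^{\ast}\KZ$ is pseudo-effective; the divisor $\ii(\F) P$ is numerically trivial (hence pseudo-effective) and $\ii(\F) N - \Delta$ is effective by Lemma \ref{L:N-delta}. Each summand is pseudo-effective, so $\KF + \frac{1}{\ii(\F)+1}\CNF$ is pseudo-effective, proving $\eff(\F)\ge 1/(\ii(\F)+1)$. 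The final bound $1/(\ii(\F)+1)\ge 1/13$ follows from the classification recalled after Theorem \ref{T:kod0}, which gives $\ii(\F)\le 12$ when $\kod(\F)=0$.

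The main subtlety I expect to navigate is promoting the numerical triviality of $\ii(\F) P$ to genuine linear triviality in order to identify $h^0(X, \KF^{\otimes m}\otimes \CNF^{\otimes n})$ with $h^0(Z,\KZ^{\otimes k})$ on the nose. This is handled by the cyclic quasi-étale cover of Theorem \ref{T:kod0}, on which $\KF$ becomes trivial, and then descending the vanishing/non-vanishing computation to $Z$.
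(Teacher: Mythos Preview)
Your proposal is correct and follows essentially the same route as the paper. Both arguments hinge on the identity $\KF + \varepsilon\KX = \varepsilon\,\pi^*\KZ + (N - \varepsilon\Delta)$ (the paper takes general rational $\varepsilon < 1/\ii(\F)$, you specialize to the diagonal $(m,n)=((\ii(\F)+1)k,k)$), invoke Lemma~\ref{L:N-delta} to make the $\pi$-exceptional remainder effective, and then use the projection formula and abundance for klt surfaces to identify all three invariants with $\kod(\KZ)$. You are in fact more explicit than the paper on two points it glosses over: the promotion of the numerical triviality of $P$ to linear triviality of a multiple via the quasi-\'etale cover of Theorem~\ref{T:kod0}, and the verification of the ``moreover'' clause, which the paper leaves implicit in its displayed decomposition.
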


\begin{proof}
	Let $\KF=P+N$ be the Zariski decomposition of $\KF$.
	Since we are assuming that $\F$ has Kodaira dimension zero we have that $P\sim_\mathbb{Q} 0$.
	Let $\pi: X \to Z$ be the contraction of the support of
	$N$ and notice that we can write
	\[
	\KF + \varepsilon \KX = \varepsilon \pi^* \KZ + ( N - \varepsilon \Delta) .
	\]
	Assume that $\varepsilon$ is rational and satisfies $\varepsilon < 1/\ii(\F)$.
	Lemma \ref{L:N-delta} implies  that $( N - \varepsilon \Delta)$ is effective.
	Hence for any $k$ sufficiently divisible,
	$h^0(X,k ( \varepsilon \pi^* \KZ + ( N - \varepsilon \Delta) ))
	\ge h^0(X,k \varepsilon\pi^* \KZ)= h^0(Z, k \varepsilon \KZ)$.
	Since  every irreducible component $E$ of the support of
	$( N - \varepsilon \Delta)$ is $\pi$-exceptional  we also have the opposite inequality.
	This shows that the Kodaira dimension of $Z$ is equal to the adjoint dimension of $\F$.
	
	To verify that the adjoint dimension and the numerical
	adjoint dimension of $\F$  coincide first observe that
	every irreducible component $E$ of of $N - \varepsilon \Delta$
	satisfies $\pi^* \KZ \cdot E =0$.
	Therefore the numerical dimension of $\KF + \varepsilon \KX$
	coincides with the numerical dimension of $\KZ$.
	As the numerical dimension of $\KZ$ and the Kodaira dimension
	of $(X,\Delta)$ coincide, the Proposition follows.
	
	Finally, the last part of the statement follows from the fact that for
	a relatively minimal foliation $\F$ of Kodaira dimension $0, \; \ii(\F)\leq 12$,
	see \eqref{eq.index.0}.
\end{proof}

\subsection{Kodaira dimension one}

\begin{prop}\label{P:kod1}
	Let $\F$ be a relatively minimal foliation of Kodaira dimension one
	on a smooth projective surface $X$.
	Let $g$ be the genus of a general fiber of
	the Iitaka fibration of $\F$.
	If $g=0$ then $\adj(\F) = \adjnum(\F) = -\infty$.
	Otherwise
	\[
	\adj(\F) = \adjnum(\F) = \min\{ g,2\} \,
	\text{ and } \eff(\F) \ge \frac{1}{4\ii(\F)  + 1}.
	\]
\end{prop}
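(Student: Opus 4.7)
The plan is to divide into the three subcases afforded by Theorem \ref{T:kod1}: $\F$ Riccati ($g=0$), $\F$ either a non-isotrivial elliptic fibration or turbulent ($g=1$), or $\F$ an isotrivial fibration of genus $\ge 2$ transverse to the Iitaka fibration $f : X \to C$ ($g \ge 2$). In each case the computation rests on intersecting the test class $\KF + \varepsilon\CNF$ with a general fiber $F$ of $f$ and, for $g \ge 2$, also with a general leaf $L$ of $\F$. Transversality (or $\F$-invariance) combined with $\CNF = \KX - \KF$ and adjunction yields in all three cases the uniform formula
\begin{equation*}
(\KF + \varepsilon \CNF)\cdot F = \varepsilon(2g-2).
\end{equation*}

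For $g = 0$ this intersection is $-2\varepsilon < 0$ for every $\varepsilon > 0$, and more generally $(\KF^{\otimes n}\otimes\CNF^{\otimes m})\cdot F = -2m < 0$ for every $m \ge 1$; since the fibers $F$ cover $X$ this forces the vanishing $h^0(X, \KF^{\otimes n}\otimes \CNF^{\otimes m}) = 0$ and the non-pseudoeffectivity of $\KF + \varepsilon\CNF$ for every $\varepsilon > 0$. Hence $\eff(\F) \le 0$ and $\adj(\F) = \adjnum(\F) = -\infty$.

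For $g \ge 1$ the strategy is to exhibit $\KF + \varepsilon\CNF$ as pseudoeffective in an explicit range of $\varepsilon$ by means of the Zariski decomposition $\KF = P + N$, the contraction $\pi : X \to Z$ of $N$, and the identity $\KX + \Delta = \pi^*\KZ$ from Lemma \ref{L:N-delta}. Using $\CNF = \KX - \KF$ one rewrites
\begin{equation*}
\KF + \varepsilon\CNF = \varepsilon\pi^*\KZ + (1-\varepsilon) P + \bigl((1-\varepsilon) N - \varepsilon\Delta\bigr).
\end{equation*}
The inequality $\Delta \le \ii(\F) N$ and the fact that each coefficient of $\Delta$ lies in $[0,1)$ allow a coefficient-wise verification that the residue is effective provided $\varepsilon \le 1/(4\ii(\F)+1)$; the stricter factor $4$ compared with Proposition \ref{P:kod0} reflects a finer analysis of $\Delta$ along the tails of the Hirzebruch-Jung strings supporting $N$, in the spirit of Proposition \ref{P:order HJ} and Lemma \ref{L:tails}. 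Pseudoeffectivity of $\pi^*\KZ$ follows in each subcase from the classification (Kodaira's canonical bundle formula for non-isotrivial elliptic fibrations, an analogous inspection for turbulent foliations, and the fact that $X$ is birational to a quotient of the product of two curves of genus $\ge 2$, hence of general type, when $g \ge 2$). Together these yield $\eff(\F) \ge 1/(4\ii(\F)+1)$.

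The dimension computation then closes the proof. For $g = 1$, $(\KF + \varepsilon\CNF)\cdot F = 0$ rules out $\adjnum(\F) \ge 2$, while the non-zero nef summand $(1-\varepsilon) P + \varepsilon\pi^*\KZ$ has numerical dimension one, giving $\adjnum(\F) = \adj(\F) = 1$. For $g \ge 2$, the analogous intersection $(\KF + \varepsilon\CNF)\cdot L = 2g(L)-2 > 0$ with a general leaf $L$ of $\F$, combined with $F \cdot L > 0$ (transversality of the two fibrations), shows via a Hodge-index argument on the $F$-$L$ subspace of N\'eron-Severi that $(\KF + \varepsilon\CNF)^2 > 0$; pseudoeffectivity plus positive self-intersection upgrades the class to a big one, so $\adj(\F) = \adjnum(\F) = 2$. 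The main obstacle is the precise constant $1/(4\ii(\F) + 1)$, which hinges on the coefficient-wise bookkeeping of $\Delta$ versus $N$ along Hirzebruch-Jung tails and is the only non-formal step of the argument.
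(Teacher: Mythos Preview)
Your decomposition and the $g=0$ case are fine, but the heart of the argument for $g\ge 1$ has a genuine gap. You assert that ``pseudoeffectivity of $\pi^*\KZ$ follows in each subcase from the classification,'' and this is false: a non-isotrivial elliptic fibration can live on a rational surface (a rational elliptic surface has $\KX \equiv -F$), so $\KZ$ need not be pseudoeffective, and nothing in Kodaira's canonical bundle formula repairs this. Splitting off $\varepsilon\pi^*\KZ$ as a separately pseudoeffective summand therefore does not work. Relatedly, your account of the constant $4$ is mistaken: the effectivity of $(1-\varepsilon)N - \varepsilon\Delta$ already holds for every $\varepsilon \le 1/(\ii(\F)+1)$ directly from Lemma~\ref{L:N-delta} (no ``finer analysis along tails'' is needed or helps), so the $4$ cannot originate in the $N$ versus $\Delta$ bookkeeping.

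What the paper actually does is prove that the \emph{combination} $4\ii(\F)\KG + \KZ$ is nef on the nef model $Z$, and this is where the $4$ enters. One argues via the Cone Theorem on the klt surface $Z$: any $\KZ$-negative extremal curve $C$ satisfies $0 < -\KZ\cdot C \le 4$. If $\KG\cdot C > 0$ then, since $\ii(\F)\KG$ is Cartier, $4\ii(\F)\KG\cdot C \ge 4 \ge -\KZ\cdot C$; if $\KG\cdot C = 0$ one invokes Lemma~\ref{L:chi} to see that $C$ is proportional to a fiber of $f$, whence $\KZ\cdot C \ge 0$ since $g\ge 1$. Nefness of $\KG + \tfrac{1}{4\ii(\F)}\KZ$ then makes the analogue of your displayed identity an honest Zariski decomposition, and the dimension count (your proportionality-to-$F$ argument for $g=1$, and a Hodge-index argument for $g\ge 2$) is carried out on the nef pullback $\pi^*\bigl(\KG + \tfrac{1}{4\ii(\F)}\KZ\bigr)$ rather than on $\pi^*\KZ$ and $P$ separately.
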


\begin{proof}
	Let $f: X \to B$ be the Iitaka fibration of $\F$.
	Assume first that $g=0$. 	
	Then for a general fiber $F$ of $f$ we have that $\KF \cdot F= 0 $ and
	$\KX \cdot F = -2$. Hence $\KF + \varepsilon \KX$ is not
	pseudoeffective for every $\varepsilon >0$. It follows
	that $\adj(\F)=\adjnum(\F)=-\infty$.
	
	Assume now that $g\ge 1$.
	Let $\KF = P+N$  be the Zariski decomposition of $\KF$ and
	let $\pi:X \to Z$ be the contraction of the negative part of $\KF$.
	Denote by $\G$ the direct image of $\F$.
	We claim that $4\ii(\F) \KG + \KZ$ is nef.
	Suppose not, and let $D$ be an effective divisor
	such that $(4\ii(\F) \KG + \KZ)\cdot D < 0$.
	By the Cone Theorem we can numerically decompose
	$D$ as a sum $\sum a_i C_i + R$ where $R$ is a
	pseudoeffective divisor and satisfies $K_Z\cdot R \ge 0$;
	$C_i$ are $\KZ$--negative extremal rays
	satisfying $0<-\KZ \cdot C_i \le 4$  and  $a_i \in \mathbb R_{>0}$.
	Therefore, there exists a $\KZ$--negative extremal
	ray $C$ such that  $(4\ii(\F) \KG + \KZ)\cdot C < 0$.
	If $\KG \cdot C =0$ then Lemma \ref{L:chi} implies that
	$C$ is numerically proportional to $\KG$. Infact, denoting by $\tilde{C}$ the strict
	transform of $C$ on $X$, we have that $P \cdot \tilde{C} = \pi^ \KG \cdot \tilde{C}=0$.
	Consequently $C$ is proportional to a general fiber of
	$f \circ \pi^{-1}$ and must intersect $\KZ$ non-negatively.
	Thus $\KG\cdot C >0$.
	Since $\ii(F) \KG$ is Cartier we deduce that $4\ii(\F) \KG \cdot C \ge 4$.
	It follows that also in this case  $(4\ii(\F) \KG + \KZ)\cdot C \ge 0$.
	We conclude that $4 \ii(\F) \KG + \KZ$ is nef.
	Consequently we obtain that
	\begin{equation}\label{E:ZD}
	\KF + \frac{1}{4\ii(\F)} \KX =
	\pi^* \left( \KG + \frac{1}{4\ii(\F)} \KZ\right) +
	\left( N - \frac{1}{4\ii(\F)} \Delta \right)
	\end{equation}
	where $\Delta$ is defined by $\KX + \Delta= \pi^* \KZ$.
	Since the singularities of $Z$ are klt, it follows that
	$N - \frac{1}{4\ii(\F)} \Delta$ is effective and that
	$ \KF + \frac{1}{4\ii(\F)} \KX $ is pseudoeffective.
	Thus $\eff(\F) \ge \frac{1}{4 \ii(\F) + 1}$.
	
	It remains to determine the adjoint dimension of $\F$.
	For that, notice that (\ref{E:ZD}) is the Zariski decomposition
	of  $\KF + \frac{1}{4\ii(\F)} \KX$.
	When $g = 1$, since $\KX$ is trivial when restricted to the
	general fiber of $f$ it follows that the positive part
	$\pi^* \left( \KG + \frac{1}{4\ii(\F)} \KZ\right)$ is
	numerically proportional to a general fiber and also that
	there exists an a effective $\mathbb Q$-divisor
	$D$ on $B$ such that $\pi^* \left( \KG + \frac{1}{4\ii(\F)} \KZ\right) = f^* B$.
	Hence $\adjnum(\F) = \adj(\F) = 1$.
	
	To prove the claim for  $g\ge 2$ it suffices to verify that
	$\pi^* \left( \KG + \varepsilon \KZ\right) ^2 > 0$
	for $\varepsilon$ sufficiently small.
	If this were not the case then $\KG \cdot \KZ =0 $ and
	$\KZ \cdot \KZ =0$.
	The Hodge index theorem would imply that $\pi^* \KZ$ is
	proportional to a general fiber $f$.
	But this is not possible since
	$\pi^* \KZ  \cdot F = 2g - 2>0$ for any fiber $F$ of $f$.
\end{proof}

\subsection{Kodaira dimension two and non-abundant foliations}

\begin{lemma}\label{L:Eff}
	Let $\F$ be a relatively minimal foliation
	with canonical singularities
	which is not a fibration by rational curves.
	Let $\KF=P+N$ be the Zariski decomposition of $\KF$.
	If $\kod(\F) \notin \{ 0, 1\}$ then
	$P + \frac{1}{3\ii(\F)} \KX$ is  nef.
\end{lemma}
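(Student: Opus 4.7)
The plan is to argue by contradiction using the Cone Theorem for smooth projective surfaces. Note first that since $\F$ is not a fibration by rational curves, Miyaoka's theorem ensures that $\KF$ is pseudo-effective, so the Zariski decomposition $\KF = P + N$ is a meaningful object and $P$ is nef. Now suppose for contradiction that
\[
D := P + \frac{1}{3\ii(\F)} \KX
\]
fails to be nef.

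Since $P$ is nef and $\KX$ is non-negative on the subcone $\overline{NE}(X)_{\KX \geq 0}$, the divisor $D$ is non-negative on that subcone. Hence by the Cone Theorem, negativity of $D$ can only be detected on a $\KX$-negative extremal ray, and such a ray is spanned by a rational curve $C$ with $-\KX \cdot C \in \{1,2,3\}$ (the three possibilities described at the beginning of Section \ref{S:classification}). So there exists such an extremal curve $C$ with $D \cdot C < 0$, which rearranges to
\[
-\KX \cdot C > 3\,\ii(\F)\,(P \cdot C) \geq 0.
\]

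I split into two cases according to the sign of $P \cdot C$. If $P \cdot C > 0$, then because $\ii(\F) P$ is an integral nef divisor one has $\ii(\F)\,(P \cdot C) \geq 1$, forcing $-\KX \cdot C > 3$, which contradicts the extremal bound. If instead $P \cdot C = 0$, then Lemma \ref{L:chi} applies verbatim to the $\KX$-negative extremal curve $C$ and yields $\kod(\F) \in \{0,1\}$, contradicting the standing hypothesis $\kod(\F) \notin \{0,1\}$.

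Either case produces a contradiction, so $D$ is nef, as required. The main obstacle here is really packaged into Lemma \ref{L:chi}: once one grants that $P \cdot C = 0$ on a $\KX$-negative extremal curve already forces Kodaira dimension $0$ or $1$, the rest is the tight numerical matching of the coefficient $1/(3\ii(\F))$ with the maximum value $3$ of $-\KX \cdot C$ on an extremal ray on a smooth surface.
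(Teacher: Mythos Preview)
Your proof is correct and follows essentially the same approach as the paper: reduce via the Cone Theorem to a $\KX$-negative extremal curve $C$, rule out $P\cdot C=0$ using Lemma \ref{L:chi}, and then use integrality of $\ii(\F)P$ to force $-\KX\cdot C>3$. The only cosmetic difference is that you present the dichotomy $P\cdot C=0$ versus $P\cdot C>0$ as two explicit cases, whereas the paper invokes Lemma \ref{L:chi} directly to conclude $P\cdot C>0$ and then runs the single inequality.
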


\begin{proof}
	Aiming at a contradiction,
	let $C$ be a curve  such that $(P+1/3\ii(\F) \KX)\cdot C < 0$.
	As in the proof of Proposition \ref{P:kod0}
	we can assume that $C$ is a $\KX$-negative extremal curve
	and therefore  $\KX \cdot C \in \{ -3,-2,-1\}$.
	By Lemma \ref{L:chi}, $P\cdot C >0$. Hence
	\[
	-\KX \cdot C_i > 3\ii(\F) (P \cdot C_i) \ge 3  \,
	\]
	gives the sought contradiction.
\end{proof}

\begin{prop}\label{P:kod2}
	Let $\F$ be a relatively minimal foliation
	with canonical singularities and pseudoeffective canonical bundle.
	If $\kod(\F) \notin \{ 0, 1\}$ then $\adjnum(\F)= \adj(\F)=2$.
\end{prop}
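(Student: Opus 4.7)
The hypothesis that $\KF$ is pseudo-effective combined with $\kod(\F) \notin \{0,1\}$ splits into two cases: either $\kod(\F) = 2$ (whence $\nu(\F) = 2$, so the positive part $P$ of the Zariski decomposition $\KF = P + N$ satisfies $P^2 > 0$), or $\kod(\F) = -\infty$, in which case Theorem \ref{T:hilbert} places $\F$ in the non-abundant regime with $\nu(\F) = 1$, and Lemma \ref{L:non-abundant} then supplies the crucial positivity $P \cdot \KX > 0$. The plan is to exhibit a small rational $\varepsilon > 0$ for which $\KF + \varepsilon \KX$ is big; this will simultaneously yield $\adjnum(\F) = 2$ and $\adj(\F) = 2$.

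To that end, first note that Miyaoka's theorem forbids a foliation with pseudo-effective canonical bundle to be a rational fibration, so Lemma \ref{L:Eff} is available and delivers nefness of $P + \varepsilon_0 \KX$ at $\varepsilon_0 = 1/(3\ii(\F))$. Since $P$ itself is nef, the rewriting
\[
P + \varepsilon \KX = (1 - \varepsilon/\varepsilon_0) \, P + (\varepsilon/\varepsilon_0) \, (P + \varepsilon_0 \KX)
\]
exhibits $P + \varepsilon \KX$ as a convex combination of nef divisors, hence nef, for every $\varepsilon \in [0, \varepsilon_0]$. To upgrade nefness to bigness for small $\varepsilon > 0$, I would expand
\[
(P + \varepsilon \KX)^2 = P^2 + 2\varepsilon (P \cdot \KX) + \varepsilon^2 \KX^2 \, .
\]
In the first case the leading term $P^2 > 0$ gives positivity by continuity; in the non-abundant case $P^2 = 0$, but the linear term is strictly positive thanks to Lemma \ref{L:non-abundant}, so the expression is positive for $\varepsilon > 0$ small enough. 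Hence $P + \varepsilon \KX$ is nef and big, and adding the effective divisor $N$ shows that $\KF + \varepsilon \KX$ itself is big.

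The passage from this to statements about $\CNF$ is bookkeeping. Using Remark \ref{R:conversion}, bigness of $\KF + \tau \KX$ for small rational $\tau > 0$ is equivalent to bigness of $\KF + \delta \CNF$ for a suitable small rational $\delta > 0$; this forces $\adjnum(\F) = 2$ (and incidentally $\eff(\F) > 0$). For the adjoint dimension, write the small rational $\varepsilon$ as $p/q$ with coprime positive integers; then the identity $q\KF + p \KX = (p+q) \KF + p \CNF$ exhibits $\KF^{\otimes(p+q)} \otimes \CNF^{\otimes p}$ as a big line bundle, whose pluricanonical sections define a rational map onto a two-dimensional image, yielding $\adj(\F) = 2$. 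The single genuinely delicate point I anticipate is the non-abundant case: with $P^2 = 0$ there is no continuity argument available, so the bigness of $P + \varepsilon \KX$ rests entirely on the positivity $P \cdot \KX > 0$ extracted from Lemma \ref{L:non-abundant}, and this is the only place where the specific structure of non-abundant foliations enters.
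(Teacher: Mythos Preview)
Your proof is correct and follows essentially the same line as the paper's: use Lemma~\ref{L:Eff} to get nefness of $P+\varepsilon\KX$, then show $(P+\varepsilon\KX)^2>0$ by appealing to $P^2>0$ when $\kod(\F)=2$ and to $P\cdot\KX>0$ from Lemma~\ref{L:non-abundant} when $\kod(\F)=-\infty$. The paper packages this as a contrapositive (assume $(P+\varepsilon\KX)^2\equiv 0$, read off $P^2=P\cdot\KX=\KX^2=0$, then contradict Lemma~\ref{L:non-abundant}), whereas you do the direct case split and spell out the convexity argument for nefness on $[0,\varepsilon_0]$ and the conversion between $\KX$ and $\CNF$; these are cosmetic differences. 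One small remark: you do not actually need the full strength of Theorem~\ref{T:hilbert} to place yourself in the hypotheses of Lemma~\ref{L:non-abundant}; it suffices to note that $\KF$ pseudo-effective with $\kod(\F)=-\infty$ forces $\nu(\F)=1$, since $\nu(\F)=0$ would give $\kod(\F)=0$ by Theorem~\ref{T:kod0} and $\nu(\F)=2$ would make $P$ big and hence $\kod(\F)=2$.
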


\begin{proof}
	Let $\KF = P + N$ be the Zariski decomposition of $\KF$.
	Since $\kod(\F) \neq0$ we have that $\nu(\F)\ge 1$.
	Lemma \ref{L:Eff} implies that
	$P + \varepsilon \KX$ is nef for $0<\varepsilon$ sufficiently small.
	
	Assume by contradiction that $\F$ is not of adjoint general type.
	Then $(P + \varepsilon \KX)^2$ must vanish identically. In fact, if
	$(P + \varepsilon \KX)^2$ does not vanish identically, then
	for $0<\varepsilon$ sufficiently small $(P + \varepsilon \KX)^2>0$
	by the nefness of $P + \varepsilon \KX$. That
	implies that $P + \varepsilon \KX$ is big $0<\varepsilon$ sufficiently small
	and the same holds true for $\KF+\varepsilon \KX$, as $N \ge 0$,
	reaching a contradiction.
	From this observation, it follows that $P^2 = P \cdot \KX = \KX ^2 =0$.
	Lemma \ref{L:non-abundant} implies that $\kod(\F)\ge 0$.
	But this is not possible by the assumptions in the statement of the proposition.
	Hence we reach the desired contradiction and the result follows.
\end{proof}

\subsection{Characterization of rational fibrations (Proof of Theorem \ref{THM:C}) }
One immediate consequence of the classification of foliations
according to their adjoint dimension is the characterization
of rational fibrations stated in the Introduction as Theorem \ref{THM:C}.

\begin{thm}\label{T:rational}
	Let $\F$ be a foliation with canonical singularities
	on a smooth projective surface $X$.
	Then $\F$ is a rational fibration if and only if
	$h^0(X,\KF^{\otimes m} \otimes \CNF^{\otimes n})=0$
	for every $m>0$ and every $n\ge 0$.
\end{thm}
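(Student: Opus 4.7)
I would prove both implications by reducing to the classification by adjoint and Kodaira dimensions summarized in Table \ref{Table:1}.

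For the forward implication, suppose $\F$ is a rational fibration. By Proposition \ref{P:standard} the dimensions $h^0(X, \KF^{\otimes m} \otimes \CNF^{\otimes n})$ depend only on the birational class of $\F$, so we may replace $(X, \F)$ by a relatively minimal model $f \colon X' \to C$, which is a $\mathbb P^1$-bundle. For a general fiber $F \cong \mathbb P^1$ of $f$ one has
\[
\KF|_F \cong K_F \cong \mathcal O_F(-2), \qquad \CNF|_F \cong N^*_{F/X'} \cong \mathcal O_F,
\]
the second isomorphism because the normal bundle of a fiber in a $\mathbb P^1$-bundle is trivial. Hence $(\KF^{\otimes m} \otimes \CNF^{\otimes n})|_F \cong \mathcal O_F(-2m)$ has no non-zero global sections when $m > 0$. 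A section of $\KF^{\otimes m} \otimes \CNF^{\otimes n}$ on $X'$ would then vanish on every general fiber, and since these fibers sweep out a Zariski-dense subset of $X'$, any such section must be identically zero.

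For the converse, assume $h^0(X, \KF^{\otimes m} \otimes \CNF^{\otimes n}) = 0$ for every $m > 0$ and every $n \ge 0$. Specializing to $n = 0$ gives $\kod(\F) = -\infty$; specializing to $m, n \ge 1$ gives $\adj(\F) = -\infty$ directly from the definition. Inspecting Table \ref{Table:1}, the only row in which both invariants equal $-\infty$ is the first, corresponding to a rational fibration. The remaining rows with $\adj(\F) = -\infty$ have $\kod(\F) \in \{0, 1\}$ and correspond to Riccati foliations or their finite quotients; these are excluded by $\kod(\F) = -\infty$. It follows that $\F$ is birationally, and hence actually, a rational fibration.

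The entire substance of the proof is packaged in the classification established in the earlier subsections of Section \ref{S:classification}. The subtle point to emphasize is that $\adj(\F) = -\infty$ alone does not force $\kod(\F) = -\infty$: Riccati foliations can be $\adj$-negative while having positive Kodaira dimension. The role of the hypothesis at $n = 0$ is precisely to eliminate these Riccati cases, leaving only the rational fibration row of Table \ref{Table:1}. Beyond consulting the classification, no additional work is required.
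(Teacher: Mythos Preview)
Your proof is correct and follows essentially the same route as the paper: the converse implication is deduced from the classification in Table~\ref{Table:1} by observing that $\kod(\F)=-\infty$ rules out the Riccati and quotient-of-Riccati rows among the foliations with $\adj(\F)=-\infty$, leaving only the rational fibration. The paper phrases this contrapositively but the content is identical; you additionally supply the easy forward implication, which the paper leaves implicit.
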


\begin{proof}
	If $\adj(\F)\ge 0$
	then $h^0(X,\KF^{\otimes m} \otimes \CNF^{\otimes n})\neq 0$ for some $m,n>0$
	by definition.
	If instead $\adj(\F)=-\infty$ and $\F$ is not a fibration by rational curves
	then by Theorem \ref{T:non.pseff.fibr},
	$\KF$ must pseudoeffective and $\kod(\F)=-\infty$.
	Moreover, we can assume that $\F$ is
	relatively minimal by Proposition \ref{P:standard}.
	But then Proposition \ref{P:kod2} gives a contradiction, as
	it would imply that $\adj(\F)=2$.
\end{proof}

For foliations on smooth surfaces of Kodaira dimension $0$ or $1$,
$h^0(X,\KF^{\otimes n}) > 0$ for some $n$ between $1$ and $12$,
see \cite{MR2177633} and \cite[Section 4]{2016arXiv160405276C}.
It is a simple matter to obtain effective non-vanishing of
$h^0(X,\KF^{\otimes n} \otimes \CNF^{\otimes m})$
for foliations $\F$ of adjoint general type as functions of their index $\ii(\F)$.
This is what we did in the proof of \ref{P:nonv} when $\kappa(\F)=2$.
The real question here is if one can do that
regardless of the index of the the foliation.

\begin{problem}\label{prob:non vanishing}
	Find universal bounds on $(n,m) \in \mathbb{Z}_{>0} \times \mathbb{Z}_{>0}$
	in order to ensure  the non-vanishing of
	$h^0(X,\KF^{\otimes m}\otimes \CNF^{\otimes n})$
	for foliations of adjoint general type.
\end{problem}

For bounded families of foliations,
the results of Section \ref{S:variation}
imply the existence of bounds depending on the family.

\section{Variation in moduli}\label{S:variation}

\subsection{Families of foliations}
We start by spelling out the definition of family
of foliated surfaces.

\begin{dfn}\label{dfn:family}
	Let $\pi:\XX \to T$ be a family of smooth projective surfaces,
	i.e. $\XX$  and $T$ are irreducible complex manifolds
	and $\pi$ is a proper submersion with projective surfaces as fibers.
	A family of foliations parametrized by $T$ is a foliation $\FF$ of
	dimension one on $\XX$ which is everywhere tangent to the fibers of $\pi$.
	If $\XX, T, \pi$ and $\FF$ are all algebraic then we say that $\FF$
	is an algebraic family of foliations.
\end{dfn}
For the sake of notation, we will indicate with $\XX_t$ the (schematic) fiber of $\pi$
over a point $t \in T$ and with $\FF_t, \; \FF_{|\XX_t}$.

Notice that in the definition above we do not impose any condition
on the nature of singularities of $\FF$, contrary to what is done
in \cite{MR1929324} where the singularities are supposed to be reduced.
Also when the dimension of $T$ is at least two
it may happen that some fibers of $\pi$ are contained in the singular
set of $\FF$.

\begin{remark}\label{R:sing}
Although no assumption is made on the nature of the singularities,
the singular set of $\FF$ has codimension at least two as the singular set
of any foliation on a smooth manifold. Then
there exists a non empty Zariski open subset $U \subset T$ such that $\sing(\FF_t)
= \sing(\FF) \cap \XX_t$ for every $t \in U$. In particular,
$K_{\FF_t} = (K_{\FF})_{|\XX_t}$, for every $t \in U$. Note also
that for every $t \in U$, we have the equality
${\det N^*_{\FF}}_{|\XX_t} = N^*_{{\FF}_t}$.
\end{remark}

It is useful to think of an algebraic family of foliations parametrized
by $T$ as a foliation defined over the function field $\mathbb C(T)$.
Algebraic properties of a very general member $\FF_t$ of the family --
like existence of invariant algebraic curves, rational first integrals,
transversely projective structures -- are displayed already when one
considers the foliation as defined over $\mathbb C(T)$.
Also the Kodaira dimension (resp. the adjoint dimension) of the foliation defined
over $\mathbb C(T)$ coincides with the Kodaira dimension (adjoint dimension)
of a very general member of the family.

\subsection{Partial reduction of singularities for families}
When the singularities of a family of foliations $\F$ on smooth surfaces
are reduced, Brunella \cite{MR1929324} showed that the Kodaira dimension of the
foliations is constant in the family. Later, Cascini and Floris
proved that for families of foliations with reduced singularities and for $m$ sufficiently large, the m-th plurigenera 
$h^0(\mathcal{X}_t, m\KF_t)$
is constant. They also presented examples  of families of foliations with reduced singularities
which have non constant m-th plurigenera for small values of $m$, see \cite[Section 3.4]{2015arXiv150200817C}. 

When we do not restrict to  families of foliations with
reduced singularities, then one of the sources of difficulties in applying
birational techniques similar to those of \cite{2015arXiv150200817C}
to understand the behavior of the plurigenera comes from the fact that canonical singularities
are not stable in the Zariski topology: the set of foliations with
at worst canonical singularities can fail to be Zariski open as the
family of foliations on $\mathbb C^2$ parametrized by $\mathbb C$
and defined by $xdy - t y dx$ shows.
In this family the singularity at the origin is canonical if and only if
$t \notin \mathbb Q_+$. Thus a very general foliation in the family has
canonical singularities, but the set of foliations with non-canonical
singularities is Zariski dense. This unpleasant situation can be avoided
if instead one considers $\varepsilon$-canonical singularities for $\varepsilon>0$.

\begin{lemma}\label{L:epsilon open}
	Let $\mathscr F$ be an algebraic  family of foliations parametrized
	by an algebraic variety $T$.
	If $0< \varepsilon< 1/4$ then the subset of $T$ corresponding to
	foliations with isolated and $\varepsilon$-canonical singularities
	is a Zariski open subset of $T$.
\end{lemma}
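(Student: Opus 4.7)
The plan is to combine Corollary \ref{C:epsilon open}, which characterizes $\varepsilon$-canonical singularities pointwise for $\varepsilon \in (0,1/4)$, with the algebraicity of the family $\FF$ in order to express the bad locus in $T$ as a finite union of Zariski closed subsets.

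First I would reduce to the isolated-singularity case: the subset of $T$ over which $\sing(\FF) \to T$ has fibre of positive dimension is Zariski closed by upper semicontinuity of fibre dimension for the proper morphism $\sing(\FF) \to T$. Restricting to its Zariski open complement, $\sing(\FF) \to T$ is finite, and I would pass to a finite cover $\widetilde T \to T$ that marks each singular point as a section --- for instance the disjoint union of normalizations of the irreducible components of $\sing(\FF)$. Over $\widetilde T$, a chosen local generator of $\FF$ around the marked singularity has linear part $L_{\tilde t}$, well defined up to a nonzero scalar, whose trace and determinant are thereby algebraic functions on $\widetilde T$ (up to the same common scalar squared).

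Applying Corollary \ref{C:epsilon open} pointwise, the marked singularity fails to be $\varepsilon$-canonical exactly when one of the following holds: either (i) $L_{\tilde t}$ is nilpotent, i.e.\ $\operatorname{tr}(L_{\tilde t}) = \det(L_{\tilde t}) = 0$; or (ii) $\FF$ is analytically conjugate near the marked point to $p x \partial_x + q y \partial_y$ for some pair $(p,q)$ in the set $S_\varepsilon := \{(p,q) : \gcd(p,q)=1,\ \varphi(p,q) < (1-\varepsilon)/\varepsilon\}$. By the Lemma preceding Corollary \ref{C:epsilon open} one has $\varphi(p,q) \geq u_0 + \cdots + u_n$ when $p/q = [u_0, \ldots, u_n]$, so $S_\varepsilon$ is finite. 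For each fixed $(p,q) \in S_\varepsilon$, the condition that the eigenvalue ratio of $L_{\tilde t}$ equals $p/q$ is the polynomial equation $pq \operatorname{tr}(L_{\tilde t})^2 = (p+q)^2 \det(L_{\tilde t})$, hence a Zariski closed subset of $\widetilde T$; and within this closed subset, analytic conjugacy to the pure linear model is in turn a closed condition, since in the Poincar\'e domain it is governed by the vanishing of the finitely many coefficients of the resonant monomials in the Poincar\'e--Dulac normal form, which are polynomial in the jet of $\FF$.

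Putting this together, the bad locus in $\widetilde T$ is a finite union of Zariski closed subsets, and its image in $T$ under the finite projection $\widetilde T \to T$ is Zariski closed; its complement is the desired Zariski open subset of $T$. The main technical obstacle is the final step of the preceding paragraph --- verifying that analytic conjugacy of $\FF$ to the pure linear model $p x \partial_x + q y \partial_y$ is a Zariski closed condition on the higher-order jet. This hinges on the absence of small-divisor obstructions in the Poincar\'e domain, which makes the Poincar\'e--Dulac normal form a polynomial object in the jet and reduces linearizability to a finite system of algebraic equations on the resonant coefficients.
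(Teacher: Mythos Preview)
Your proof follows the same strategy as the paper's: reduce to Corollary~\ref{C:epsilon open}, deduce that a non-$\varepsilon$-canonical singularity either has nilpotent linear part or is formally equivalent to one of finitely many linear models $px\,\partial_x+qy\,\partial_y$, and observe that each such condition is Zariski closed. The paper's own proof is much terser, simply asserting that both conditions are ``clearly closed''; your elaboration via Poincar\'e--Dulac normal forms is essentially a justification of that assertion.

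One small refinement to your resonance argument: the pair $(p,q)=(1,1)$ lies in $S_\varepsilon$ (since $\varphi(1,1)=1$), and for equal eigenvalues there are no resonant monomials of degree $\ge 2$. Your criterion as stated would then place the entire locus $\operatorname{tr}(L)^2=4\det(L)$ in the bad set, but germs on this locus with non-diagonalizable linear part, such as $x\,\partial_x+(y+x)\,\partial_y$, are in fact canonical (a single blow-up yields only a saddle-node) and must be excluded. The additional closed condition needed here is that the linear part be scalar. Once this is added---or once ``the Poincar\'e--Dulac normal form equals the diagonal linear model'' is read as including linear-level diagonalizability---your argument is complete.
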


\begin{proof}
	This is a simple consequence of Corollary \ref{C:epsilon open}.
	If a singularity is not $\varepsilon$-canonical,
	$0<\varepsilon < 1/4$, then either its linear part is
	nilpotent or the singularity is formally equivalent to one of the finitely many
	singularities of the form  $ p x \px + qy \py$ with $p,q$ relatively prime
	positive integers satisfying $\varphi(p,q) < \frac{\varepsilon}{1-\varepsilon}$
	(see Definition \ref{D:order} for the meaning of $\varphi$).
	Since both conditions are clearly closed the lemma follows.
\end{proof}

\begin{prop}\label{P:Seidenberg}
	Given an algebraic family of foliation $\FF$ parametrized
	by an algebraic variety $T$ and a real number $\varepsilon >0$, there
	exists a non-empty Zariski open subset $U \subset T$ and a family
	of foliations $\GG$ on $\YY \to U$	obtained from $\FF_{|U}$
	by a finite composition of blow-ups over (multi)-sections
	such that for every closed point $t \in U$,
	the foliation $\GG_{t}$ has at worst $\varepsilon$-canonical singularities.
\end{prop}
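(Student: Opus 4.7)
The strategy is to carry out a relative version of Seidenberg's reduction of singularities, controlled by Corollary \ref{C:epsilon open} so that only those blow-ups needed to eliminate non-$\varepsilon$-canonical singularities are performed. By Remark \ref{R:45}, $\varepsilon'$-canonical implies $\varepsilon$-canonical for all $\varepsilon \ge \varepsilon'$, so it suffices to prove the statement for $\min(\varepsilon,1/8)$; I therefore assume throughout that $0 < \varepsilon < 1/4$, which brings Lemma \ref{L:epsilon open} and Corollary \ref{C:epsilon open} into play.

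First I would shrink $T$ to a Zariski open subset over which the singular set $\sing(\FF) \subset \XX$ has pure codimension two and breaks up as a disjoint union of multi-sections of $\pi$. This follows from generic flatness applied to the ideal sheaf of $\sing(\FF)$ and from the fact that fiberwise the singularities are isolated for $t$ in a dense open subset of $T$. Along each multi-section $S_i$ the formal type of the singularity of $\FF_t$ at the point $S_i \cap \pi^{-1}(t)$ varies constructibly in $t$, so after a further shrinking of $T$ this type may be taken to be constant along $S_i$. By Corollary \ref{C:epsilon open}, each $S_i$ then belongs to exactly one of three classes: (a) generically $\varepsilon$-canonical; (b) with non-nilpotent linear part formally conjugate to $p x \px + q y \py$ for coprime integers satisfying $\varphi(p,q) < (1-\varepsilon)/\varepsilon$; or (c) with generically nilpotent linear part.

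For each $S_i$ of type (b) or (c), the pointwise Seidenberg reduction resolves the singularity into $\varepsilon$-canonical ones after a finite sequence of blow-ups at points, as is explicit in the proofs of Proposition \ref{P:logcanonical} and Corollary \ref{C:epsilon open}. I would perform the corresponding relative blow-ups: blow up $\XX$ along $S_i$, shrink $T$ further so that the new singular locus in a neighborhood of the exceptional divisor is again a disjoint union of multi-sections of constant formal type, and iterate. Assigning to each non-$\varepsilon$-canonical formal type the finite complexity equal to the number of blow-ups required by the pointwise procedure, and taking as global measure the sum of these complexities over the remaining bad multi-sections, an induction on this measure shows that the process terminates after finitely many blow-ups over multi-sections. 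At the end, the generic fiber has only $\varepsilon$-canonical singularities, and Lemma \ref{L:epsilon open} then identifies a non-empty Zariski open $U \subset T$ where every fiber of $\GG$ has at worst $\varepsilon$-canonical singularities.

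The main technical obstacle is the combinatorial bookkeeping across the induction: after each blow-up along a multi-section, one must check that the new singular locus is again, possibly after shrinking $T$, a disjoint union of multi-sections carrying a constant formal type, and that the complexity strictly decreases. This is addressed by iterating the generic flatness and constructibility arguments from the first step together with the very explicit description of the fiberwise Seidenberg tree of blow-ups recalled in Section \ref{S:singularities}, which restricts the formal types appearing above the exceptional divisor to those already allowed by Corollary \ref{C:epsilon open}, thereby preventing the complexity from ever going up.
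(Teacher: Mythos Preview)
Your approach is correct, but it is considerably more laborious than the paper's. The paper's proof consists of two sentences: view $\FF$ as a single foliation defined over the function field $\mathbb C(T)$, apply Seidenberg's theorem there to obtain a foliation with reduced (hence canonical, hence $\varepsilon$-canonical) singularities over $\mathbb C(T)$, spread this finite sequence of blow-ups out to a Zariski open subset of $T$ so that one still has a family in the sense of Definition~\ref{dfn:family}, and then invoke Lemma~\ref{L:epsilon open} to pass from the generic fiber to every closed fiber over a possibly smaller open set.

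Your relative construction is essentially the unwinding of this generic-point argument: blowing up a closed point of the generic fiber is exactly blowing up along a multi-section, and spreading out is what produces the shrinking of $T$. What you gain is an explicit, fiberwise picture and a direct appeal to the finitely many bad eigenvalue ratios from Corollary~\ref{C:epsilon open}. What you lose is simplicity: you must repeatedly invoke generic flatness and constructibility to keep the singular locus organized as multi-sections of constant combinatorial type, and you must set up a complexity function to prove termination, whereas in the paper's argument termination is simply Seidenberg's theorem applied once over $\mathbb C(T)$. Note also that the paper does not bother to tailor the blow-ups to the particular $\varepsilon$: it goes all the way to reduced singularities at the generic fiber, which is harmless since canonical implies $\varepsilon$-canonical. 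One small caution in your version: for multi-sections of type (c) (nilpotent linear part), ``constant formal type'' is not a finite condition, so what you really need---and what suffices---is constancy of the combinatorics of the Seidenberg tree, which is indeed constructible.
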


\begin{proof}
    First replace $T$ by a Zariski open subset in such a way that the singular scheme
    of $\FF$ becomes flat over $T$. In particular, every irreducible component of the singular
    set  of $\FF$ is a multi-section of the projection to $T$.

    We will say that an irreducible component $\Sigma$ of the singular
    set of $\FF$ is  reduced when, for a very general $t \in T$,  the points in $\Sigma_t$ are
    reduced singularities for $\FF_t$.

    If we interpret $\FF$ as a foliation $\FF_{\overline{\mathbb C(T)}}$ over $\overline{\mathbb C(T)}$, the Zariski closure of
    the function field of $T$, then the reduced/non-reduced singularities of $\FF_{\overline{\mathbb C(T)}}$ correspond, respectively, to
   irreducible components of $\sing(\FF)$ having reduced/non-reduced singularities at the very general point.

    The proof of Seidenberg's theorem presented in  \cite[Appendix 1]{MR608290} and \cite{MR2114696}  works for arbitrary algebraically closed fields
    of characteristic zero.  It consists of showing that the iterated blow-ups of non-reduced points will eventually
    lead to foliations with only reduced singularities. First one aims to obtain foliations with all singularities having
    non-trivial linear part.   To achieve this one controls the multiplicity of the singularities
    of the foliation by means of Van den Essen formula which is established in \cite[pages 52-53]{MR548142} (see also   \cite[Appendix 1, formulas (2.1) and (2.2)]{MR608290})
    for arbitrary algebraically closed fields of characteristic zero. Then to go from non-trivial linear part
    to non-nilpotent linear part, one carries out explicit algebraic computation computations, for details see
    \cite[Appendix 1, Theorem 2]{MR608290}. Finally, to go from non-nilpotent linear part to quotient of eigenvalues
    not belonging to $\mathbb Q_{>0}$ one relies on Euclidean division algorithm \cite[Appendix 1, Theorem 3]{MR608290}.

    Therefore blowing-up non-reduced multi-sections, restricting $T$ to  suitable Zariski open subsets, and  repeating if necessary,
    will lead to the sought a family of foliations over a Zariski open subset of $T$ with all irreducible components of its singular
    set reducible. We apply Lemma \ref{L:epsilon open} to conclude.	
\end{proof}

\subsection{Families of foliations of negative adjoint dimension}
Foliations of negative adjoint dimension also behave better in
families compared to foliations of negative Kodaira dimension.

\begin{lemma}\label{L:negative open}
	Let $(\pi: \XX \to T, \FF)$ be an algebraic family of foliations.
	If for a very general closed point $t_0 \in T$ the foliation
	$\FF_{t_0}$ is reduced and has negative adjoint dimension
	then  there exists a non-empty Zariski open subset $U \subset T$ such
	that for every closed point $t \in U$ the foliation $\FF_t$
	has negative adjoint dimension.
\end{lemma}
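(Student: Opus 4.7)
The plan is to combine upper semicontinuity of cohomology with the classification of foliations of negative adjoint dimension summarized in Table~\ref{Table:1}.

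First, I would argue that the generic fiber $\FF_\eta$ of the family, viewed as a foliation over the function field $\mathbb{C}(T)$, satisfies $\adj(\FF_\eta) = -\infty$. For each pair of positive integers $(m,n)$, the function $t \mapsto h^0(X_t, \KF_t^{\otimes m} \otimes \CNF_t^{\otimes n})$ is upper semicontinuous on $T$, so the locus where it is strictly positive is Zariski closed; the hypothesis that $\adj(\FF_{t_0}) = -\infty$ at a very general $t_0$ forces this closed locus to be proper, and hence the corresponding cohomology vanishes at the generic point. Ranging over all $(m,n)$ then yields $\adj(\FF_\eta) = -\infty$.

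Next, by the classification summarized in Table~\ref{Table:1}, $\FF_\eta$ must be birationally equivalent to a rational fibration, a Riccati foliation, or a finite quotient of a Riccati foliation generated by a global vector field. In each case, on a suitable birational model of $X_\eta$ and possibly after a finite \'etale cover, there is a $\mathbb{P}^1$-fibration $\pi_\eta \colon X_\eta \dashrightarrow B_\eta$ to which $\FF_\eta$ is either tangent (the rational fibration case) or generically transverse (the Riccati cases). Spreading out, $\pi_\eta$ extends to a relative rational map $\pi \colon \XX \dashrightarrow \mathscr{B}$ over a Zariski open subset $U \subset T$, after possibly replacing $\XX$ by a birational model. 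For every $t \in U$, the map $\pi_t$ is a $\mathbb{P}^1$-fibration on $X_t$, and $\FF_t$ remains either tangent or generically transverse to $\pi_t$, since both conditions are open in $t$. Invoking Table~\ref{Table:1} a second time, this forces $\adj(\FF_t) = -\infty$ for all $t \in U$.

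The main obstacle lies in the spreading-out step: one must argue that the $\mathbb{P}^1$-fibration persists in a Zariski-open way, using relative Hilbert schemes of rational curves together with the openness of the transversality condition with respect to $\FF$. The finite quotient case introduces the additional subtlety of descending a finite \'etale cover of $\overline{\mathbb{C}(T)}$ back to a Zariski open of $T$, which can be handled by either working directly on the quotient surface via the invariant numerical class of a general fiber of the transverse $\mathbb{P}^1$-bundle, or by passing to an \'etale cover of $T$ whose image remains Zariski open.
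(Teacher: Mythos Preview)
Your strategy---classify $\FF_\eta$ via Table~\ref{Table:1}, then spread out the resulting $\mathbb{P}^1$-fibration to a Zariski open of $T$---is sound and close in spirit to the paper's, but the mechanisms differ. The paper organizes the case analysis by $\kod(\FF_{t_0})$ rather than by fibration type, and replaces your abstract spreading-out by more concrete tools. In the $\kod=1$ (Riccati) case it uses the effective bound that $|K_{\FF_t}^{\otimes n}|$ already defines the reference rational fibration for some $n\le 42$, so semi-continuity of $h^0$ produces the fibration on a Zariski open directly; the Riccati property is then propagated via the numerical condition $K_{\FF_t}\cdot F=0$ and a cited characterization of Riccati foliations. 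In the $\kod=0$ case it invokes Theorem~\ref{T:kod0} over $\overline{\mathbb C(T)}$ and passes to an \'etale cover of the base to obtain a family defined by global vector fields on surfaces of negative Kodaira dimension. The rational-fibration case is handled by semi-continuity of the genus of the fibers. One imprecision in your argument deserves attention: the assertion that ``both conditions are open in $t$'' is not correct as stated---tangency to a fibration is a \emph{closed} condition, not an open one. What actually persists is the intersection number $K_{\FF_t}\cdot F_t=0$, which is locally constant once the family of fibers is flat; this then forces each $\FF_t$ to be either Riccati or tangent (hence a rational fibration), which is precisely the paper's mechanism in the $\kod=1$ case.
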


\begin{proof}
	Assume first that for  a very general point $t \in T$ the foliation
	$\FF_t$ has Kodaira dimension one. Since the adjoint dimension is negative,
	$\FF_t$ must be a Riccati foliation.
	It follows from \cite[Proposition 4.3]{2016arXiv160405276C} that for some
	$n \le 42$ the linear system $|K_{\FF_t}^{\otimes n}|$ is non-empty and
	defines the reference rational fibration.
	Moreover, the general fiber of the reference fibration intersects
	$K_{\FF_t}$ trivially. After replacing $T$ by a Zariski open subset we can assume that $\sing(\FF_t) = \sing(\FF) \cap \XX_t$, see Remark \ref{R:sing}.
    We can also assume that  $\pi_* K_{\FF}^{\otimes n}$ is locally free. Consider the rational map defined by pluricanonical sections
    \[
        \XX \dashrightarrow \mathbb P ( (\pi_* K_{\FF}^{\otimes n})^* ) .
    \]
    We can further restrict $T$ in order to get an actual morphism.  Let $P : \XX \to B \subset \mathbb P ( (\pi_* K_{\FF}^{\otimes n})^* )$
    be such morphism. We do not claim
    that $P$ has irreducible fibers, but since $\FF_t$ is a Riccati foliation of Kodaira dimension one for a very general $t$,
    it follows that  very general fibers of $P$ are smooth
    curves with rational connected components, completely transverse to $\FF_t$, and  intersecting  $K_{\FF}$ trivially.
    Hence, using for instance Ehresmann's fibration theorem, we can conclude that the same holds true
    for fibers of $F$ over a Zariski open subset $V$ of $B$. Let $U \subset T$ be a Zariski open subset contained in the image of $V$
    under the natural projection $B\to T$.
    We can apply \cite[Proposition 4.1]{MR2114696} to deduce that for every $t \in U$
	the foliation $\FF_t$ is a Riccati foliation and as such has
	negative adjoint dimension.
	
	Assume now that for a very general point $t \in T$ the foliation $\FF_t$
	has Kodaira dimension zero. We will make use of Lefschetz principle to deal
    with this case. Since the family $\FF$ is defined by finitely many equations, we can consider a finitely
    generated extension $K$ of $\mathbb Q$ over which everything in sight is defined. Embed $K$ into $\mathbb C$  and apply Theorem \ref{T:kod0}.
	We deduce that  after restricting $T$ to a Zariski open subset $U$ and base changing the family $\FF$ through an \'etale covering $V\to U$
    we obtain that the resulting family $\XX' \to U$ is birationally equivalent to a finite quotient of a smooth family of foliations $\mathscr G$
    on $\mathscr{Z} \to V$ defined by global holomorphic vector fields. Since we are assuming that for a very general $t\in T$ the foliation has negative
    adjoint dimension it follows that the very general fiber of $\mathscr{Z} \to V$ is a surface of negative Kodaira dimension and the corresponding
    foliation is a Riccati foliation. It follows that for every $t \in U$, $\FF_t$ has negative adjoint dimension.
	
	Finally, let us assume that  a very general $t \in T$ the foliation $\FF_t$ is a rational fibration. Let $U \subset T$ be
	a Zariski open subset over which every irreducible component of the singular scheme of $\FF$ is flat. As we are assuming that $\FF_t$ is a foliation with reduced singularities tangent to a rational fibration for a very general $t \in T$,  it follows that for  a very general  $t \in U$ the singularities of $\FF_t$ have invertible linear part and  quotient of eigenvalues in $\mathbb Q_{<0}$. Flatness of the singular scheme of $\FF$ over $U$ guarantees that the singularities of $\FF_t$ have invertible linear part for every $t \in U$. Continuity of the quotient of eigenvalues ensures that they are constant  functions of $t \in U$. Thus, for every $t \in U$, the foliations $\FF_t$ have reduced singularities. It follows from \cite[Lemma 1]{MR1929324} that $\FF_t$ is a rational fibration for every $t\in U$.
\end{proof}

\subsection{Boundedness of the effective threshold in families}
We have now all the ingredients to prove the result mentioned at
the end of Section \ref{S:log canonical}.

\begin{thm}\label{T:non accumulation}
	Let $(\pi: \XX \to T, \FF)$ be an algebraic family of foliations.
	Then there exists $\delta >0$ such that, for every $t \in T$,  the following
	holds true: $\adj(\FF_t)=-\infty$ or $\eff(\FF_t)\ge \delta$. In other words,
	if $\eff(\FF_t) < \delta$ then $\adj(\FF_t)=-\infty$.
\end{thm}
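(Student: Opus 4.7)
My plan is a Noetherian induction on $T$: it suffices to produce a non-empty Zariski open $U \subseteq T$ and a positive $\delta_U$ such that every $t \in U$ satisfies $\eff(\FF_t) \geq \delta_U$ or $\adj(\FF_t) = -\infty$, since the induction hypothesis applied to each irreducible component of the proper closed subset $T \setminus U$ then produces finitely many positive numbers whose minimum gives the global $\delta$.

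To construct such a pair $(U, \delta_U)$, I first invoke Lemma \ref{L:negative open}: either we conclude immediately, or the very general $\FF_t$ has $\adj(\FF_t) \geq 0$. In this latter case, choose a sufficiently small $\varepsilon_0 \in (0, 1/4)$ and apply Proposition \ref{P:Seidenberg} to pass, over a Zariski open $U_1 \subseteq T$, to a birationally equivalent family $\GG$ of $\varepsilon_0$-canonical foliations on $\YY \to U_1$. Shrinking $U_1$ further by semi-continuity, we may also assume that $\kod(\GG_t)$ is constant and, in the Kodaira dimension one case, that the genus $g$ of the general fiber of the Iitaka fibration is constant as well.

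The classification results of Section \ref{S:classification}, applied to the relatively minimal model of $\GG_t$, then yield lower bounds of the form $\eff(\GG_t) \geq 1/(c \cdot \ii(\GG_t) + 1)$ with $c \in \{1,3,4\}$ depending only on the common Kodaira dimension: Proposition \ref{P:kod0} gives $c = 1$ (in fact $\eff \geq 1/13$) in Kodaira dimension zero; Proposition \ref{P:kod1} gives $c = 4$ in Kodaira dimension one; and in the general type case one gets $c = 3$ by combining Lemma \ref{L:Eff} with Remark \ref{R:conversion}, observing that $K_{\GG_t} + \frac{1}{3\ii(\GG_t)}K_{Y_t} = (P_t + \frac{1}{3\ii(\GG_t)}K_{Y_t}) + N_t$ is the sum of a nef and an effective divisor. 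Thus it suffices to bound $\ii(\GG_t)$ uniformly by some $I$ on a Zariski open $U \subseteq U_1$, after which, provided we chose $\varepsilon_0 < 1/(cI + 1)$ from the start, the birational invariance of $\eff$ for $\varepsilon_0$-canonical foliations proven in Section \ref{S:singularities} transfers the bound to $\eff(\FF_t) = \eff(\GG_t) \geq 1/(cI + 1) =: \delta_U$.

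The main obstacle is establishing this uniform index bound. Viewed as a foliation over the function field $\mathbb{C}(T)$, the generic fiber $\GG_\eta$ has a finite index $I = \ii(\GG_\eta)$; the support of the negative part of its Zariski decomposition extends to finitely many algebraic families of Hirzebruch-Jung strings over some Zariski open of $T$, coming exactly from those singularities of $\GG$ along which the eigenvalue ratio is a rational constant. Removing from $U_1$ the proper closed locus where extra non-canonical singularities appear, i.e.\ where a generically irrational eigenvalue ratio specializes to a rational value producing an additional Hirzebruch-Jung contribution, yields a Zariski open $U$ on which the Zariski decomposition of $K_{\GG_t}$ is the pointwise specialization of that of $K_{\GG_\eta}$, and in particular $\ii(\GG_t) = I$ for every $t \in U$. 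Noetherian induction applied to $T \setminus U$ then concludes. The technically delicate point is verifying rigorously that this specialization of the Zariski decomposition is truly clean on a Zariski open, as well as resolving the apparent circularity of choosing $\varepsilon_0$ in terms of $I$ while $I$ is extracted from a family of $\varepsilon_0$-canonical foliations, the latter being handled by the observation that $I$ is determined by the birational class of $\FF_\eta = \GG_\eta$ alone.
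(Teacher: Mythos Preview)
Your Noetherian induction and the treatment of the $\adj=-\infty$ branch via Lemma~\ref{L:negative open} match the paper exactly, but in the $\adj\ge 0$ branch you take a substantially longer route and manufacture the very obstacle you then struggle to overcome. The paper's argument here is three lines: if $\adj(\FF_{t_0})\ge 0$ at a very general $t_0$, fix integers $m,n>0$ with $h^0\big(\XX_{t_0}, {K_{\FF_{t_0}}}^{\otimes m}\otimes {N^*_{\FF_{t_0}}}^{\otimes n}\big)>0$; by upper semi-continuity of $h^0$ this nonvanishing persists everywhere; then choosing $\varepsilon<\min(n/m,\,1/4)$ in Proposition~\ref{P:Seidenberg} (equivalently Lemma~\ref{L:epsilon open}) produces a Zariski open $U$ on which every $\FF_t$ is $\varepsilon$-canonical, so that nonvanishing is birationally meaningful and yields $\eff(\FF_t)\ge n/m$ directly for all $t\in U$. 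No index ever appears. Your detour through the classification bounds $\eff\ge 1/(c\,\ii+1)$ trades this one-line semi-continuity for the problem of uniformly bounding $\ii(\GG_t)$, which you correctly identify as the ``main obstacle'' and ``technically delicate'' and only sketch via a specialization-of-Zariski-decomposition argument that is never actually carried out. Incidentally, the claim that $\kod(\GG_t)$ can be made constant on a Zariski open ``by semi-continuity'' is unjustified---the Kodaira dimension of foliations is not semi-continuous in families, which is part of the motivation for introducing the adjoint dimension in the first place---though since the uniform choice $c=4$ already covers all three Kodaira-dimension cases, this particular slip is harmless.
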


\begin{proof}
    The proof is by Noetherian induction.
    Therefore it suffices to show that the  result is valid over a Zariski open subset of $T$.

    In particular, after replacing $T$ by a Zariski open subset, we can assume     that $\sing(\FF_t) = \sing(\FF) \cap \XX_{t}$ for every $t \in T$.

	Proposition \ref{P:Seidenberg} guarantees that there is no loss of generality in assuming that
	$\FF_t$ has canonical singularities for a very general $t \in T$.
	
	If $\adj(\FF_t)\ge0$ for a very general $t \in T$ then
	there exists $m,n>0$ such that $h^0(\XX_t, {{K_{\FF}}_t}^{\otimes m} \otimes {{N^*_\FF}_t}^{\otimes n}) >0$
	for a very general $t \in T$.  Choose $\varepsilon>0$ small enough and apply
	Proposition \ref{P:Seidenberg} to obtain a Zariski open $U\subset T$ such that  $\FF_t$ has at worst
	$\varepsilon$-canonical singularities for every $t \in U$.
	Remark \ref{R:sing} guarantees that  ${{K_{\FF}}_t}^{\otimes m} \otimes {{N^*_\FF}_t^{\otimes n}}$ is the restriction
	of a line bundle on $\XX$ to $\pi^{-1}(t)$.
	By semi-continuity it follows that $\eff(\FF_t) \ge \frac{n}{m}$ for every $t \in U$.
	
	If instead $\adj(\FF_t)=-\infty$ for a very general $t \in T$
	then Lemma \ref{L:negative open} implies that the same
	holds true for every $t$ in a Zariski open subset of $T$.
	
	In any case, we have just proved that the result is true for the restriction of
	$\FF$ to a non-empty Zariski open subset $U$ of $T$.
	
	The proof continues by passing to  irreducible
	components  of $T\setminus U$. To be more precise let $S$ be a resolution of singularities of one such irreducible component, and let $\YY \to S$ be the base to change of $\XX\to T$ to $S\to T\setminus U \subset T$.
	If \hbox{$v \in H^0(\XX, T_{\XX/T}\otimes K_{\FF})$} is a twisted vector field defining $\FF$
	then it induces an element \hbox{$w \in H^0(\YY, T_{\YY/S}\otimes (K_{\FF})_{\YY})$}. If $w$ is identically zero then
	the points of $S$ do not correspond to foliations and we do nothing. Otherwise, we divide $w$ by the divisorial components of its singular set in order to obtain the defining twisted vector field of a family of foliations
	over $S$.  Since we are back to our original problem, but with a  lower dimensional base, we are done by Noetherian induction.
\end{proof}

\section{Foliations with rational first integrals}\label{S:rationalfirstintegral}

\subsection{Transversely affine and transversely projective foliations} 
A foliation on a projective surface $X$ is called transversely affine if for any rational $1$-form $\omega_0$ defining
$\F$, there exists a  rational $1$-form $\omega_1$ such that
\[
d\omega_0  = \omega_0 \wedge \omega_1  \quad \text{ and } \quad     d\omega_1  = 0 \, .
\]

Similarly, a foliation  $\F$ on $X$ is called transversely projective if for any rational $1$-form $\omega_0$ defining $\F$
there exists rational $1$-forms $\omega_1$ and $\omega_2$ such that
\begin{align*}
d\omega_0 & = \omega_0 \wedge \omega_1  \\
d\omega_1 & = 2\omega_0 \wedge \omega_2 \\
d\omega_2 & =\omega_1 \wedge \omega_2 \, .
\end{align*}

For a thorough discussion about transversely affine and transversely
projective foliations of codimension one on projective manifolds the reader should consult \cite{MR3294560}
and \cite{MR3522824} respectively.

\subsection{Statement of the main result}

This section is devoted to the proof of the following result.

\begin{thm}\label{T:main}
	Let $(\pi: \XX \to T, \FF)$ be an algebraic family of foliations and $g\ge0$ be an integer.
	Let $\Sigma_g \subset T$ be the Zariski closure of the set of parameters corresponding
	to foliations birationally equivalent to a fibration of geometric genus at most $g$.
	Then for every $t\in \Sigma_g$ the foliation $\FF_t$ is transversely projective.
\end{thm}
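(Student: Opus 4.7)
The plan is a Noetherian induction on closed subsets of $T$. Replacing $T$ by $\Sigma_g$, it suffices to exhibit a non-empty Zariski open $U \subseteq T$ over which every $\FF_t$ is transversely projective; the inductive hypothesis then handles $T\setminus U$ (applied to the Zariski closure of its intersection with the original parameter set, which is a proper closed subvariety of $T$ and hence of smaller dimension or with fewer components). Using Proposition \ref{P:Seidenberg}, I may shrink $T$ further and assume that every $\FF_t$ has at worst $\varepsilon$-canonical singularities for some small $\varepsilon>0$; this is harmless since transverse projectivity is a birational invariant.

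Next, I would classify the generic foliation $\FF_\eta$, viewed as a foliation over $\mathbb C(T)$, by its birational type. By hypothesis, the very general $\FF_t$ is birationally equivalent to a fibration of geometric genus at most $g$, so $\FF_\eta$ is birationally equivalent to one of: a rational fibration; a Riccati foliation; a turbulent foliation; an isotrivial fibration of genus $\geq 2$; or a non-isotrivial fibration of genus at most $g$. In each of the first four cases, $\FF_\eta$ is transversely projective by inspection, and this property propagates to a Zariski open subset of $T$: the coherent-sheaf data carrying the structure (the first integral, the $\mathbb P^1$-bundle and Riccati equation, the elliptic fibration and its affine monodromy, the product decomposition after base change) spreads out flatly, while Lemma \ref{L:negative open} and Theorem \ref{T:non accumulation} confirm that the birational type is stable in a Zariski neighborhood of $\eta$.

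The substantive case is when $\FF_\eta$ is birationally equivalent to a non-isotrivial fibration of geometric genus $g'\in[1,g]$, so that $\kod(\FF_t)=2$ for very general $t$. Fix a $\pi$-ample line bundle $\mathcal H$ on $\XX$. The index satisfies the uniform bound $\ii(\FF_t) \leq (42(2g-2))!$ by Proposition \ref{index.prop}, and the intersection numbers $K_{\XX_t}\cdot \mathcal H_t$ and $K_{\FF_t}\cdot \mathcal H_t$ are locally constant in $t$. Consequently Theorem \ref{T:A} yields a constant $N = N(g,\mathcal H, \FF)$ such that the general fiber $F_t$ of the rational first integral of $\FF_t$ satisfies
\[
F_t \cdot \mathcal H_t \;\leq\; N
\]
for every $t$ in the dense open subset where the fibration exists.

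The hard part is the passage to the limit. Inside the relative Chow variety of $\XX/T$, the locus swept out by the classes $[F_t]$ is contained in a bounded subscheme, whose closure $\mathscr C \to T$ is therefore proper. After replacing $T$ by a suitable generically finite cover along which an irreducible component of $\mathscr C$ dominates, the fiber $\mathscr C_t$ at every $t\in T$ consists of $\FF_t$-invariant curves of $\mathcal H_t$-degree at most $N$. A dimension count using that two distinct general fibers of the first integral are numerically proportional and generically disjoint forces $\mathscr C_t$ to be one-dimensional and to cover $\XX_t$; the resulting pencil defines a rational first integral of $\FF_t$, making $\FF_t$ algebraically integrable and in particular transversely projective. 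The main delicate point is precisely to verify that the Chow-theoretic limit is still a covering family of genuinely distinct invariant curves rather than collapsing to a multiple of a single leaf, and for this one uses that for generic $t$ the class $[F_t]$ moves in a one-parameter linear family with trivial self-intersection (up to the negative part of the Zariski decomposition), a property preserved under specialization. This completes the inductive step and hence the theorem.
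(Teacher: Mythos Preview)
The central gap is your assertion that ``by hypothesis, the very general $\FF_t$ is birationally equivalent to a fibration of geometric genus at most $g$''. After replacing $T$ by $\Sigma_g$, the hypothesis only gives you a Zariski \emph{dense} subset of parameters with a rational first integral of bounded genus, not that the generic (or very general) member has one. These are very different: Lins Neto's pencils, mentioned in the introduction, are precisely families in which the algebraically integrable members form a countable Zariski dense set while the generic member has no rational first integral at all. Consequently your case split---classifying $\FF_\eta$ as a rational fibration, Riccati, turbulent, isotrivial, or non-isotrivial fibration---is simply not available; the generic foliation may well be of general type with no first integral. (The list itself is also confused: Riccati and turbulent foliations are not fibrations.)

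The paper avoids this by splitting instead on the \emph{adjoint dimension} of the very general member. If $\adj(\FF_t)<2$ for very general $t$, Proposition~\ref{P:special is proj} makes $\FF_t$ transversely projective, and then a careful specialization argument for the projective triple $(\omega_0,\omega_1,\omega_2)$ (Proposition~\ref{P:tpfam}, which your sketch omits entirely) shows that \emph{every} member of the family is transversely projective. If $\adj(\FF_t)=2$ for very general $t$, one shrinks to a Zariski open $U\subset T$ on which every member is of adjoint general type, and then applies the boundedness of leaves (Theorem~\ref{T:A} for the non-isotrivial case, Proposition~\ref{P:isotrivial} for the isotrivial case) only to the dense subset $\Sigma_g\cap U$, concluding via the Chow-variety argument you outline that its Zariski closure still consists of foliations with a rational first integral. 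Your limiting argument at the end is essentially the right idea, but it must be applied to the dense algebraically-integrable locus inside the adjoint-general-type stratum, not deduced from a false hypothesis on the generic member.
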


If one considers the universal family of degree $d$ foliations on $\mathbb P^2$
then one promptly realizes that Theorem \ref{THM:B} is nothing but a particular
case of this more general statement.

\subsection{Example} Before dealing with the proof of Theorem \ref{T:main} let us
analyze the Zariski closure of the set of foliations admitting a rational first integral
in a family derived from Gauss hypergeometric equation.

Whenever $c \notin \mathbb Z$,  Gauss hypergeometric equation
\[
z(1-z)w'' +(c-(a+b+1)z)w'-abw=0,
\]
admits as general solution in a neighborhood of the origin the function
\[
\varphi(z) = C_1 F (a, b, c; z) + C_2 z^{ 1-c} F (a - c + 1, b - c + 1, 2 - c; z),
\]
where $C_1 ,C_2$ are arbitrary constants to be determined by boundary conditions
and
\[
F(a,b,c;z) = 1 + \sum \frac{(a)_n(b)_n}{(c)_n} z^n, \; \; (p)_n:=p(p+1)(p+2)\cdots(p+n-1).
\]
The  change of variable $y(z) = -d \log w(z)$  associates a
Riccati equation/foliation to any second order differential equation.
In this new coordinate the family of foliations induced by Gauss
hypergeometric equation can be written as
\[
\omega =z(1-z)dy- z(1-z)y^2 +(c-(a+b+1)z)y+ab dz \, .
\]
If $\varphi(z)$ is an arbitrary solution of Gauss hypergeometric equation then
$y = - d\log \varphi(z)$ is a solution of the corresponding Riccati equation.
If we choose $c \in \mathbb Q - \mathbb Z$,   $a \in \mathbb Z_{<0}$, and
$b = c - 1 + \beta$ where $\beta \in \mathbb Z_{<0}$ then it is clear from the
explicit form of the solutions that all the leaves of the foliation corresponding to
this choice of parameters are algebraic.
It follows that the set of foliations in this family admitting a rational integral is Zariski dense.

On the one hand, for a very generic choice of parameters, the monodromy group of Gauss hypergeometric
equation is Zariski dense in $\Aut(\mathbb P^1)$, see for instance \cite[Chapter 4, \S 5]{MR1453580}. 
On the other hand,  in the case of transversely affine Riccati foliations the monodromy group 
is not Zariski dense (it must be a solvable subgroup of $\Aut(\mathbb P^1)$. Therefore for a very
generic choice of parameters the foliation defined by $\omega$ is transversely projective, but not
transversely affine. 

To conclude, we point out  that for the
choice of parameters made above the foliations are birationally equivalent to
fibrations by rational curves. Hence one cannot hope to replace
transversely projective by transversely affine in the statement of Theorem \ref{T:main}.

\subsection{Non-isotrivial fibrations}
We   now start the proof of Theorem \ref{T:main}.
We first treat the case of foliations birationally equivalent to non-isotrivial fibrations.

\begin{prop}\label{P:nonisotrivial}
	Let $g\ge 1$ be a natural number and let $(\pi: \XX \to T, \FF)$ be an
	algebraic family of foliations. The Zariski closure in $T$ of the set of
	parameters corresponding to foliations birationally equivalent to
	non-isotrivial fibrations of genus at most $g$  consists of foliations
	admitting rational first integrals.
\end{prop}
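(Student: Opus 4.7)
Let $S \subset T$ denote the set of parameters corresponding to foliations birationally equivalent to non-isotrivial fibrations of genus at most $g$, so that the goal is to show that for every $t \in \overline{S}$ the foliation $\FF_t$ admits a rational first integral. The plan is to combine the uniform degree bound supplied by Theorem \ref{T:A} with an upper semicontinuity argument on the relative Chow scheme, concluding via the Darboux--Jouanolou theorem. I would first fix a relatively ample divisor $H$ on $\XX/T$ and apply Proposition \ref{P:Seidenberg} with a small $\varepsilon > 0$ to restrict to a Zariski open $U \subset T$ carrying an $\varepsilon$-canonical model $(\YY, \GG)$ of $(\XX|_U, \FF|_U)$. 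Since rational first integrals are birationally invariant and Noetherian induction permits us to iterate on $T \setminus U$, it would suffice to establish the result on $\overline{S} \cap U$.

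The crux is a uniform bound on the $H$-degree of leaves. For $t \in S \cap U$ the birational equivalence to a non-isotrivial fibration forces $\kod(\GG_t) \in \{1,2\}$ according to whether $g=1$ or $g\geq 2$. In the hyperbolic case, Theorem \ref{T:A} yields
\[
F \cdot H \leq M \bigl(K_{\YY_t} + 7\,\ii(\GG_t)\,K_{\GG_t}\bigr) \cdot H
\]
with $M = M(g)$; Proposition \ref{index.prop} bounds $\ii(\GG_t)$ by a function of $g$ alone, and the intersection numbers $K_{\YY_t} \cdot H$ and $K_{\GG_t} \cdot H$ are locally constant in $t$. The elliptic case I would treat analogously, using the Iitaka fibration of $\GG_t$ together with the Seshadri estimate of Theorem \ref{T:Seshadri} to control the numerical class of the general fiber. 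The outcome is a constant $D = D(g, \pi, \FF, H)$ bounding the $H$-degree of every general leaf of $\GG_t$ for every $t \in S \cap U$.

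With $D$ in hand, I would consider the relative Chow variety $\Phi\colon \mathscr{C}_D \to U$ parametrizing effective $1$-cycles of $H$-degree at most $D$ in the fibers of $\YY \to U$. Because invariance by a foliation is a Zariski closed condition, the subset $\mathscr{C}_D^{\GG}$ of $\GG$-invariant cycles is a closed subscheme of $\mathscr{C}_D$, projective over $U$. For $t \in S \cap U$ the fibers of the rational first integral of $\GG_t$ form a one-parameter family of invariant cycles of $H$-degree at most $D$, so $\dim (\mathscr{C}_D^{\GG})_t \geq 1$. Upper semicontinuity of fiber dimension for $\Phi$ restricted to $\mathscr{C}_D^{\GG}$ then forces the closed locus $W_1 := \{t \in U : \dim (\mathscr{C}_D^{\GG})_t \geq 1\}$ to contain $\overline{S} \cap U$. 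For every $t \in W_1$ the foliation $\GG_t$ consequently admits infinitely many algebraic invariant curves, and Darboux--Jouanolou delivers a rational first integral for $\GG_t$, and hence for $\FF_t$.

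The main technical obstacle I anticipate is the uniform degree bound of the second paragraph: Theorem \ref{T:A} provides it for a single foliation, and extending it across the family requires both Proposition \ref{index.prop} to remove the dependence on $\ii(\GG_t)$ and the local constancy of intersection numbers to remove the dependence on $t$. The remaining steps -- closedness of invariance in the Chow scheme, upper semicontinuity of fiber dimension, and the classical Darboux--Jouanolou theorem -- are standard and should pose no substantive difficulty.
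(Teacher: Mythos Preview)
Your overall strategy coincides with the paper's: obtain a uniform bound on the degree of leaves, then conclude that rational first integrals persist to the Zariski closure. The paper invokes an external result (\cite[Proposition 2.1]{MR2248154}) for this second step, whereas you spell out the Chow-scheme plus Darboux--Jouanolou argument explicitly; that is fine and is essentially what lies behind the citation.

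For $g\ge 2$ your use of Theorem~\ref{T:A} is what the paper does. One technicality: Theorem~\ref{T:A} is stated for a foliation with canonical singularities which \emph{is} a fibration, while your $\GG_t$ has only $\varepsilon$-canonical singularities and is merely birationally equivalent to one. You should pass, for each $t$, to a further model with canonical singularities on which the fibration is a morphism and pull $H$ back there; since $H$ is a pullback the intersection numbers with $K$-classes are unchanged (this is exactly the trick in the proof of Theorem~\ref{THM:A}), and the bound descends.

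The genuine gap is the case $g=1$. A non-isotrivial elliptic fibration has $\kod(\F)=1$, so $\KF$ is not big and Theorem~\ref{T:Seshadri} gives no information; there is nothing ``analogous'' to the argument of Theorem~\ref{T:A} to invoke. Your phrase ``Iitaka fibration together with the Seshadri estimate'' does not produce a bound: the Iitaka fibration of $\KF$ \emph{is} the elliptic fibration itself, and you would still need to control how many fibers fit into a fixed multiple of $\KF$. The paper handles this case by a separate, specific fact: the general fiber of a non-isotrivial elliptic fibration lies in the zero locus of a section of $\KF^{\otimes 12}$ (cf.\ \cite[Proposition 4.2]{2016arXiv160405276C}), which immediately bounds $F\cdot H$ by $12\,\KF\cdot H$. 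You need to supply this (or an equivalent) input; the Seshadri machinery is the wrong tool here.
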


\begin{proof}
	According to  \cite[Proposition 2.1]{MR2248154}  it suffices to prove
	that the fibers of the non-isotrivial fibrations in the family belong to a
	bounded family of curves.
	
	For $g=1$ the boundedness is clear since the fibers of non-isotrivial
	elliptic fibration $\FF_t$ are contained in zero sets of sections of
	${K_{\FF}}_t^{\otimes 12}$, see for instance
	\cite[Proposition 4.2]{2016arXiv160405276C}.
	The boundedness of fibers of non-isotrivial fibrations of genus
	$g\ge 2$ is guaranteed by Theorem \ref{THM:A}.
\end{proof}

\subsection{Isotrivial fibrations of adjoint general type}
For isotrivial fibrations of adjoint general type  the situation is  better when
compared to non-isotrivial fibrations as there is no need to bound the
genus in order to obtain boundedness of the leaves.

\begin{prop}\label{P:isotrivial}
	Let $(\pi: \XX \to T, \FF)$ be an algebraic family of foliations.
	The Zariski closure in $T$ of the set of parameters corresponding to
	foliations of adjoint general type  birationally equivalent to
	isotrivial fibrations  consists of foliations admitting rational first integrals.
\end{prop}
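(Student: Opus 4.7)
The plan is to mirror the proof of Proposition \ref{P:nonisotrivial}: I would show that, after possibly shrinking $T$ to a Zariski open subset, the leaves of the foliations parametrized by points of the locus $S\subset T$ appearing in the statement form a bounded family of curves, and then invoke \cite[Proposition 2.1]{MR2248154} to conclude that the Zariski closure of $S$ consists of foliations admitting rational first integrals (which are in particular transversely projective). By Noetherian induction on $T$, it suffices to establish the leaf-degree bound on a suitable Zariski open subset.

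First I would apply Proposition \ref{P:Seidenberg} to reduce, after shrinking $T$ and performing a partial resolution over a (multi)-section, to the situation where every $\FF_t$ has $\varepsilon$-canonical singularities for some $0<\varepsilon<1/4$. Theorem \ref{T:non accumulation} then yields a uniform lower bound $\delta>0$ on $\eff(\FF_t)$ for $t\in S$, since every such parameter satisfies $\adj(\FF_t)=2$. For each $t\in S$, adjoint general type provides a pair $(a_t,b_t)\in\mathbb{Z}_{>0}^2$ such that $H^0(\XX_t,K_{\FF_t}^{\otimes a_t}\otimes (N^*_{\FF_t})^{\otimes b_t})$ contains three algebraically independent sections. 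For a fixed pair $(a,b)$, the locus $T_{a,b}\subset T$ where this property holds is constructible in $T$; Noetherian induction then lets one replace $T$ by a Zariski open on which a single pair $(a,b)$ works for every parameter in a dense subset of $S$. Fixing a divisor $H$ on $\XX$ relatively ample over $T$ and applying Proposition \ref{P:A} fiberwise yields
\[
F_t\cdot H|_{\XX_t}\le 2a(2g_t-2)\,(aK_{\FF_t}+bN^*_{\FF_t})\cdot H|_{\XX_t}
\]
for every $t$ in the dense subset, where $F_t$ is a general leaf of $\FF_t$ and $g_t$ its genus.

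The main obstacle is the uniform bound on $g_t$. By Table \ref{Table:1}, an adjoint general type foliation birationally equivalent to an isotrivial fibration is a finite quotient of a product $C_1^{(t)}\times C_2^{(t)}\to C_1^{(t)}$ with both factors of genus at least two, so $g_t=g(C_2^{(t)})$. Since $\chi(\mathcal O_{\XX_t})$ is locally constant in an algebraic family of smooth projective surfaces, and since the order of the finite Galois group involved is controlled by the Hirzebruch-Jung configurations appearing in the negative parts of $K_{\FF_t}$ (which are themselves uniform on $T$ after the $\varepsilon$-canonical reduction, cf.\ Proposition \ref{P:order HJ}), the genus $g_t$ should be uniformly bounded on a Zariski open subset. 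The intersection numbers on the right-hand side of the displayed inequality are bounded by topology as well. Consequently the $H$-degree of $F_t$ is uniformly bounded, the leaves $F_t$ lie in a bounded family of curves, and \cite[Proposition 2.1]{MR2248154} then finishes the argument.
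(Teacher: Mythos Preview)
Your approach has a genuine gap at the step where you bound the genus $g_t$. The sentence ``the genus $g_t$ should be uniformly bounded'' is precisely where the argument breaks down, and the justification you offer does not close it. After the $\varepsilon$-canonical reduction of Proposition~\ref{P:Seidenberg} the surfaces $\XX_t$ are \emph{not} relatively minimal models of $\FF_t$, so the negative part $N$ of $K_{\FF_t}$ and its Hirzebruch-Jung strings are not visible on $\XX_t$; you cannot read off the index or the group order from that model. Even on a relatively minimal model, the orders of the Hirzebruch-Jung strings control $\ii(\F)$, not the order of the Galois group $G_t$ acting on $C_1^{(t)}\times C_2^{(t)}$; these are different invariants. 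Moreover, the passage from the constant $\chi(\mathcal O_{\XX_t})$ to $(g_1^{(t)}-1)(g_2^{(t)}-1)/|G_t|$ requires knowing that $\XX_t$ is birational to a surface with only rational singularities that resolves the quotient, and then still leaves you with a product of three unknowns constrained by a single equation. Without an independent bound on $|G_t|$ you cannot conclude anything about $g_t$, and no such bound is supplied.

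The paper sidesteps the genus entirely. The key observation is that an isotrivial fibration of adjoint general type has $\kod(\F)=1$, and by \cite[Proposition~4.10]{2016arXiv160405276C} two independent sections of $\KF^{\otimes k}$ already exist for some \emph{universal} $k\le 42$. These sections define the Iitaka fibration, i.e.\ a second foliation $\G$ transverse to $\F$, and one computes $\KG$ explicitly in terms of $\KF$. The leaves of $\F$ are then fibers of the Iitaka fibration of $\KG$, so repeating the same $k'\le 42$ bound shows they lie in zeros of sections of $\KX^{\otimes k'}\otimes\KF^{\otimes 2kk'}$. This gives boundedness of leaves from a line bundle that is uniform in the family, with no reference to~$g_t$. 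Your Proposition~\ref{P:A} route would only recover this if you could first bound $g_t$; the paper's double-fibration trick is what makes that unnecessary.
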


\begin{proof}
	If $\F$ is an isotrivial fibration of adjoint general type on a projective surface $X$ then $\F$ has
	Kodaira dimension one and the Iitaka fibration of $\KF$ is an isotrivial
	fibration of genus $g\ge 2$.
	According to \cite[Proposition 4.10]{2016arXiv160405276C} there are
	at least two linearly independent sections $\sigma_1, \sigma_2$ of $\KF^{\otimes k}$ for some $k\le 42$.
	Consider the rational map $f = (\sigma_1: \sigma_2) : X \dashrightarrow \mathbb P^1$
	defined by them. The foliation $\G$ defined by $f$ coincides with the foliation
	defined  by the Iitaka fibration of $\KF$. Its normal bundle is of the form $N_{\G} =
	f^* T_{\mathbb P^1} \otimes \mathcal O_X( - \Delta) = \KF^{\otimes 2k} \otimes \mathcal O_X( - \Delta)$
	where $\Delta$ is an effective divisor.
	Since the leaves of $\F$ are contained in fibers of the Iitaka fibration
	of $\KG$,  we  repeat the  argument to  obtain the existence of a $k'\le 42$
	such that the leaves of $\F$ are contained in zero set of sections of
	$\KX^{\otimes k'} \otimes \KF^{\otimes 2 k' k } \otimes \mathcal O_X(- k' \Delta)$.
	This suffices to prove the boundedness of the leaves of foliations in a  family
	having adjoint general type and birationally equivalent to isotrivial fibrations.
\end{proof}

\subsection{First integrals and transverse structures}

\begin{prop}\label{P:special is proj}
	Let $\F$ be a foliation on a projective surface $X$.
	If $\adj(\F)<2$ then $\F$ is a transversely projective foliation.
	Moreover, if $\adj(\F) \in \{ 0, 1 \}$  then
	$\F$ is a transversely affine foliation.
\end{prop}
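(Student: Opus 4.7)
The plan is to argue case by case using the classification encoded in Table \ref{Table:1}, proved via the previous subsections. Since both transversely affine and transversely projective structures are birational invariants of foliations on projective surfaces (see \cite{MR3294560} and \cite{MR3522824}), we are free to replace $\F$ with any birationally equivalent foliation appearing in the classification. In every case we will produce the defining forms $\omega_0,\omega_1,\omega_2$ of the definition directly, and in the adjoint dimension $0$ and $1$ cases we will actually produce a closed meromorphic $1$-form $\omega_0$ defining $\F$, for which the triple $(\omega_0,0,0)$ automatically satisfies the transversely affine equations.

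First suppose $\adj(\F)=-\infty$. Table \ref{Table:1} leaves three possibilities. When $\F$ is a rational fibration, a rational first integral $f$ yields $\omega_0=df$ with $d\omega_0=0$, so $(\omega_0,0,0)$ does the job. When $\F$ is a Riccati foliation, by definition $\F$ is transverse to the generic fiber of a $\mathbb P^1$-bundle $p\colon X\to B$, and a local trivialisation together with the structural equations of the Riccati equation
\(
dy-(a(x)+b(x)y+c(x)y^2)\,dx=0
\)
produce the triple $(\omega_0,\omega_1,\omega_2)$ explicitly (this is classical, see e.g.\ \cite[Section 4]{MR2435846}). In the remaining case of a finite quotient of a Riccati foliation tangent to a global vector field, the projective structure on the cover is equivariant for the deck group and thus descends to $X$.

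Next suppose $\adj(\F)\in\{0,1\}$. In every case listed in Table \ref{Table:1} we exhibit $\F$ as being defined by a closed meromorphic $1$-form. For a finite quotient of a linear foliation on an abelian surface, the linear foliation is defined by a non-zero translation-invariant holomorphic $1$-form, which is in particular invariant under the covering group and hence descends to a closed meromorphic $1$-form defining $\F$. For a (finite quotient of a) projection $E\times C\to C$ with $g(C)\ge 2$ and for a non-isotrivial elliptic fibration, the foliation is a true fibration $f\colon X\to B$ and $\omega_0=f^{\ast}\eta$ for any non-zero meromorphic $1$-form $\eta$ on $B$ is closed (since $\dim B=1$) and defines $\F$. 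For a finite quotient of $E\times C\to E$ with $g(C)\ge 2$, the foliation is cut out by the pullback of a non-zero holomorphic $1$-form on the elliptic curve $E$, which is closed and equivariant under the covering group. Finally, turbulent foliations are, by construction, transverse to an elliptic fibration with monodromy contained in the translation group of the elliptic fiber; hence after passing to a suitable finite cover they are defined by a closed meromorphic $1$-form, and this form descends because the monodromy acts by translations. In every case $(\omega_0,0,0)$ verifies the transversely affine equations, which in particular implies transverse projectivity.

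The main obstacle is the careful handling of finite quotients and of the turbulent case: one has to ensure that the closed $1$-form produced on a cover is equivariant, so as to descend to $X$. In the affine and linear cases this is automatic because the group acts by translations of the base; in the turbulent case it follows from the classification of the monodromy representation. Everything else reduces to the essentially formal observation that a fibration or a closed defining form trivially produces a transversely affine structure, while the Riccati $\mathbb P^1$-bundle produces a transversely projective one.
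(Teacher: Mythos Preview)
Your overall strategy---case analysis via Table \ref{Table:1}---is the same as the paper's. The gap is in how you pass through finite quotients. You repeatedly assert that a closed defining $1$-form on the cover is ``invariant under the covering group and hence descends'' (for linear foliations on tori, for the $E\times C\to E$ quotients, and for turbulent foliations). This is false in general. For instance, the Kummer involution $x\mapsto -x$ on an abelian surface sends every holomorphic $1$-form to its negative, so no nonzero closed holomorphic $1$-form is invariant; yet the quotient foliation is still transversely affine. Likewise, the monodromy of a turbulent foliation is a subgroup of $\operatorname{Aut}(E)\cong E\rtimes \mu_n$ with $n\in\{2,4,6\}$ and need not lie in the translation subgroup, so your closed-form-descends argument breaks there too.

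The paper avoids this by invoking a stronger invariance: transversely affine (resp.\ projective) is preserved not merely under birational maps but under \emph{dominant rational maps}, in particular under finite quotients (this is in the references you already cite, \cite{MR3294560, MR3522824}). So the correct step is: the cover carries a closed rational $1$-form (or, in the Riccati case, a projective triple), hence is transversely affine (resp.\ projective); therefore the quotient foliation is as well---without any claim that the specific $1$-form descends. Once you replace your ``descends'' arguments by this invariance principle, the proof goes through and coincides with the paper's.
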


\begin{proof}
	This is a straightforward consequence of the classification.
	If $\F$ has adjoint dimension zero then it is  birationally equivalent
	to a finite quotient of a foliation defined
	by a closed rational $1$-form. Since the property of being
	transversely affine is invariant by dominant rational maps, $\F$ is transversely affine.
	If $\F$ has adjoint dimension one then $\F$ is either a fibration
	(and therefore clearly transversely affine) or $\F$ is a turbulent foliation
	which is well-known to be transversely affine (see for instance \cite[Proposition 22]{MR2029287}).
	Finally if $\F$ has negative adjoint dimension then it is either a fibration,
	a Riccati foliation, or a finite quotient of a Riccati foliation. In any case we
	have that $\F$ is a transversely projective foliation.
\end{proof}

\begin{prop}\label{P:tpfam}
	Let $(\pi: \XX \to T, \FF)$ be an algebraic family of foliations. If for a very general closed point
	$t_0 \in T$ the foliation $\FF_{t_0}$ is a transversely projective foliation then  for every closed 	
	point $t \in T$ the foliation $\FF_t$ is a transversely projective foliation.
	Similarly, if for a very general closed point
	$t_0 \in T$ the foliation $\FF_{t_0}$ is a transversely affine foliation then
	for every closed point $t \in T$ the foliation $\FF_t$ is a transversely affine foliation.
\end{prop}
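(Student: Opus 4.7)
The strategy is to combine the hypothesis with the algebraic nature of a transverse structure and the geometric classifications of transversely projective (respectively, transversely affine) foliations from \cite{MR3522824} (respectively, \cite{MR3294560}). I describe the projective case; the affine case is entirely parallel.

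First, view $\FF$ as a foliation $\FF_K$ defined over the function field $K=\mathbb{C}(T)$. Being transversely projective is characterized by the existence of three rational 1-forms $\omega_0,\omega_1,\omega_2$ relative to $T$ satisfying the system displayed in the definition of transverse projectivity, which is an algebraic system cut out in a constructible parameter space of triples of rational 1-forms. Hence, if a very general geometric fiber $\FF_{t_0}$ admits a solution, so does $\FF_K$. Applying the geometric description of \cite{MR3522824} to $\FF_K$, one obtains (possibly after a finite base change $T'\to T$, which is harmless for the conclusion) algebraic data over $K$: a $\mathbb{P}^1$-bundle $\mathcal{P}_K\to\XX_K$, a Riccati foliation $\mathscr{R}_K$ on $\mathcal{P}_K$, and a rational section $\sigma_K$ satisfying $\sigma_K^\ast \mathscr{R}_K=\FF_K$.

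Second, spread this data out to a Zariski dense open $U\subset T'$, obtaining $(\mathcal{P}|_U,\mathscr{R}|_U,\sigma|_U)$ whose restriction to each closed $t\in U$ exhibits $\FF_t$ as transversely projective. For the remaining fibers, proceed by Noetherian induction on $T'$: apply the same argument to each irreducible component of $T'\setminus U$, viewed as an algebraic family in the sense of Definition~\ref{dfn:family}, and iterate. The process exhausts $T'$ in finitely many steps, and descending along $T'\to T$ finishes the argument.

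The main obstacle is the bootstrap inside the Noetherian induction: for each irreducible $Z\subset T'\setminus U$ one must verify that the very general fiber of $\FF|_Z$ is again transversely projective. The natural way to handle this is to extend the geometric model $(\mathcal{P},\mathscr{R},\sigma)$ across $Z$ --- blowing up $\XX$ if needed and replacing $\mathcal{P}$ by a flat completion --- and to check that for a very general $z\in Z$ the specialized section $\sigma_z$ does not get absorbed into a single leaf of $\mathscr{R}_z$, which would collapse the transverse structure. The required non-degeneracy should follow from the fact that $\FF_z$ remains a nontrivial one-dimensional foliation, so that $\sigma_z$ stays transverse to the fibers of $\mathcal{P}_z\to\XX_z$ on a dense open set; combined with the invariance of transverse projectivity under dominant rational maps already used in Proposition~\ref{P:special is proj}, this propagates the transverse projective structure to every $\FF_t$.
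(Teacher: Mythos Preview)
Your overall strategy differs from the paper's, and the difference exposes a genuine gap. The paper does \emph{not} use Noetherian induction or the geometric Riccati model of \cite{MR3522824}; instead it works directly with the triplet $(\omega_0,\omega_1,\omega_2)$ of rational $1$-forms defining the projective structure over $\mathbb C(T)$ and shows, by explicit gauge transformations and a residue computation, that this triplet specializes to a projective structure on \emph{every} fiber $\XX_{t_0}$, including those contained in the polar or zero divisors of the $\omega_i$. The point is that the gauge group acting on such triplets is large enough (scalings $(\omega_0,\omega_1,\omega_2)\mapsto(f^k\omega_0,\omega_1,f^{-k}\omega_2)$ and shifts by multiples of $\omega_0$) to normalize the order of $\omega_0$ and $\omega_1$ along $\XX_{t_0}$; once that is done, a short computation with the structure equations forces either a well-defined specialized triplet or a closed form defining $\FF_{t_0}$.

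Your Noetherian induction, by contrast, needs as input that for each irreducible component $Z\subset T'\setminus U$ the very general fiber $\FF_z$ is again transversely projective. That is exactly the specialization statement you are trying to avoid proving directly, so the induction does not bootstrap unless you actually carry out the extension of the Riccati model across $Z$. You acknowledge this, but the justification offered does not close the gap: saying ``$\sigma_z$ stays transverse to the fibers of $\mathcal P_z\to\XX_z$'' is automatic for any rational section and is not the issue. What can fail is that (i) the foliation $\mathscr R$ may cease to be Riccati over $Z$ (it can become everywhere tangent to the ruling in the limit), or (ii) the image of $\sigma_z$ may become $\mathscr R_z$-invariant, in which case $\sigma_z^*\mathscr R_z$ is not $\FF_z$ but the trivial foliation. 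Neither is excluded by the non-triviality of $\FF_z$ alone, and the invariance of transverse projectivity under dominant rational maps does not help here since in those degenerate cases there is no dominant map from a Riccati foliation to $\FF_z$ to begin with. Ruling these degenerations out is essentially equivalent to the gauge-normalization argument the paper performs on the triplet $(\omega_0,\omega_1,\omega_2)$; without it, the induction step is unjustified.
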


\begin{proof}
	We can interpret the family of foliation as a single foliation defined over the function field
	${\mathbb C(T)}$. By assumption, this foliation is  transversely projective.
	Hence there exists a triplet $(\omega_0, \omega_1, \omega_2)$
	of rational differential $1$-forms with coefficients in $\overline{\mathbb C(T)}$,
	the algebraic closure of $\mathbb C(T)$, satisfying the equations
	\begin{align*}
	d\omega_0 & = \omega_0 \wedge \omega_1  \\
	d\omega_1 & = 2\omega_0 \wedge \omega_2 \\
	d\omega_2 & =\omega_1 \wedge \omega_2 \, .
	\end{align*}
	and such that $\omega_0$ is a $1$-form differential form defined over $\mathbb C(T)$ which defines $\FF$. According
	to \cite[Lemma 3.2]{MR1955577} we can assume that $\omega_1, \omega_2$ are also defined over $\mathbb C(T)$ ( no need to
	pass to the algebraic closure). Therefore, over $\mathbb C$, we have the equations
	\begin{align*}
	d\omega_0\wedge d\pi & = \omega_0 \wedge \omega_1\wedge d\pi  \\
	d\omega_1\wedge d\pi & = 2\omega_0 \wedge \omega_2\wedge d\pi \\
	d\omega_2\wedge d\pi & =\omega_1 \wedge \omega_2\wedge d\pi \, .
	\end{align*}
	If $t \in T$ is such that $\pi^{-1}(t)$ is not contained in the polar set of $(\omega_i)_{\infty}$ for $i = 0 , 1 ,2$
	nor in the zero set of $\omega_0$ then the restriction of the triple $(\omega_0, \omega_1, \omega_2)$ to the fiber over $t$ defines
	a (singular) projective structure for the foliation $\FF_t$ on $X_t = \pi^{-1}(t)$.
	
	Let us  fix $t_0 \in T$ such that $X_0 = \pi^{-1}(t_0)$ is contained in the polar set of $\omega_i$ ($i=0,1,2$) or in the zero set of $\omega_0$  and let $f \in \pi^* \mathcal O_{T,t_0}$ be a
	rational function on $X_0$ corresponding to a
	generator of the maximal ideal of $\mathcal O_{T,t_0}$. Notice that we can replace the triplet $(\omega_0,\omega_1, \omega_2)$ by $(f^{k} \omega_0, \omega_1, f^{-k}\omega_2)$.
	Thus, there is no loss of generality in assuming that $\pi^{-1}(t_0)$ is not contained in $(\omega_0)_{\infty} \cup (\omega_0)_0$.

	For $i = 0 , 1 , 2$, let $a_i$ be the order of $\omega_i$ along $X_0$ and set $\alpha_i =  \Res_{X_0} f^{-a_i} \omega_i \wedge \frac{df}{f}$.
	As mentioned above we will assume that $a_0=0$ and, therefore,  $\alpha_0$ is just the restriction of $\omega_0$ to the
	fiber $X_0$.
	
	If $a_1$ is negative then, comparing the  orders along $X_0$ of $d\omega_0\wedge df $ and of $\omega_0 \wedge \omega_1\wedge df$, we deduce that
	$\alpha_0 \wedge \alpha_1 = 0$ and we can write $\alpha_0 = g \alpha_1$ for some rational function $g \in \mathbb C(X_0)$. Let $G \in \mathbb C(\XX)$
	be a rational function on $\XX$ extending $g$. According to formula (14) of \cite{MR2324555} we can replace the
	triplet $(\omega_0, \omega_1, \omega_2)$ by the triplet
	\[
	\left(\omega_0 , \,  \omega_1 - f^{-a_1} G \omega_0, \, \omega_2 + f^{-a_1}G \omega_1 + f^{-2a_1}G^2 \omega_0 - f^{-a_1}dG\right).
	\]
	This increases $a_1$. After a finite number of changes we may assume that $a_0=0$ and $a_1 \ge 0$.
	
	Finally, if  $a_2$ is negative and $a_1>0$ then $\alpha_0$ is closed and
	it is clear that $\FF_{t_0}$ is transversely projective. If instead $a_2<0$ and $a_0=a_1=0$  then comparing the  orders along $X_0$ of $d\omega_1\wedge df $ and  $\omega_0 \wedge \omega_2\wedge df$ we deduce that
	$\alpha_0 \wedge \alpha_2 = 0$. Thus we can write $\alpha_2= h\alpha_0$ for a suitable rational function $h \in \mathbb C(X_0)$. From the equation
	$d\omega_2\wedge df =\omega_1 \wedge \omega_2\wedge df$ we deduce that $d\alpha_2 = \alpha_1 \wedge \alpha_2$. Combining these two identities we obtain
	\[
	d(h\alpha_0) = \alpha_1 \wedge (h \alpha_0)  \implies d\alpha_0 = (\alpha_1 - \frac{dh}{h}) \wedge \alpha_0.
	\]
	Finally, comparing this identity with  $d\alpha_0 = \alpha_0 \wedge \alpha_1$ (first equation) we obtain that $d \alpha_0 = -(1/2) \frac{dh}{h} \wedge \alpha_0$.
	Thus $\FF_{t_0}$ is transversely projective also in this case.
\end{proof}

\subsection{Proof of Theorem \ref{T:main} (and of  Theorem \ref{THM:B})}
Let $(\pi: \XX \to T, \FF)$ be an algebraic family of foliations and $g\ge0$ be an integer.
We want to prove that the Zariski closure of $\Sigma_g \subset T$ (subset  parametrizing
foliations with rational first integral of genus at most $g$)
corresponds to transversely projective foliations.

If a very general member of the family, say $\F_t$, is not of  adjoint general type
then Proposition \ref{P:special is proj} implies that $\F_t$ is transversely projective. We can apply
Proposition \ref{P:tpfam} to conclude that every foliation in the family is also transversely projective.

If instead a very general member is of adjoint general type then we will argue  as in the proof of Theorem
\ref{T:non accumulation} to obtain a non-empty Zariski open subset of $T$ such that every foliation parametrized by this
subset is of adjoint general type.

Proposition \ref{P:Seidenberg}  allows us to assume
the existence of a non-empty Zariski  open subset $U_0 \subset T$ that for a very general $t \in U_0$, the foliation $\FF_t$ has canonical singularities.  Since $\mathbb C$ is uncountable
we also know that there exists $n, m>0$ and an open subset $U_1 \subset T$ such that for every $t \in U_1$, the linear system $|{{K_{\FF}}_t}^{\otimes m} \otimes {{N^*_\FF}_t}^{\otimes n}|$
defines a rational map with two dimensional image. Notice that there may exist foliations in $U_0 \cap U_1$ which are not of adjoint general type
because of the presence of non-canonical singularities.  To remedy this we take $\varepsilon >0$ sufficiently small  in order to obtain from Lemma \ref{L:epsilon open}
a non-empty Zariski open $U_2\subset T$  such that $\FF_t$ has $\varepsilon$-canonical singularities. Every foliation parametrized by non-empty Zariski open  $U=U_0 \cap U_1 \cap U_2$ is
of adjoint general type.

Propositions \ref{P:nonisotrivial} and \ref{P:isotrivial} imply that the Zariski closure in $T$ of
$\Sigma_g \cap U$ corresponds to foliations with rational first integrals. The Theorem follows  by Noetherian induction.  \qed

\bibliographystyle{amsalpha}
\bibliography{references}{}

\end{document}